\newcommand{\D}{\partial}
\numberwithin{equation}{section}
\def\[#1\]{%
  \begin{align*}#1\end{align*}%
}
\newtheorem*{theorem*}{Theorem}
\DeclareMathAlphabet{\mathpzc}{OT1}{pzc}{m}{it}
\begin{document}
\thispagestyle{empty}
\ArticleName{Bigraded cochain complexes and Poisson cohomology}
\ShortArticleName{Bigraded cochain complexes and Poisson cohomology}
\Author{Andr\'es Pedroza$^{1}$, Eduardo Velasco-Barreras$^{2}$, and Yury Vorobiev$^{3}$.}
\AuthorNameForHeading{Andr\'es Pedroza, Eduardo Velasco-Barreras, and Yury Vorobiev.}

\noindent\emph{Facultad de Ciencias, University of Colima}

\noindent\emph{Bernal D\'iaz del Castillo 340, Colima, M\'exico, 28045.}$^{1}$\vspace{6pt}

\noindent\emph{Department of Mathematics, University of Sonora}

\noindent\emph{Rosales y Blvd. Luis Encinas, Hermosillo, M\'exico, 83000.}$^{2,3}$\vspace{6pt}

\noindent E-mail adresses: \emph{\href{mailto:andres_pedroza@ucol.mx}{andres\_pedroza@ucol.mx}$^{1}$,\href{mailto:lalo.velasco@mat.uson.mx}{lalo.velasco@mat.uson.mx}$^{2}$,\href{mailto:yurimv@guaymas.uson.mx}{yurimv@guaymas.uson.mx}$^{3}$}.

\Abstract{We present an algebraic framework for the computation of low-degree cohomology of a class of bigraded complexes which arise in Poisson geometry around (pre)symplectic leaves. We also show that this framework can be applied to the more general context of Lie algebroids. Finally, we apply our results to compute the low-degree cohomology in some particular cases.}
\Keywords{Poisson cohomology; Lie algebroids; Singular foliation; Coupling method; Low-degree cohomology.}
\Classification{
18G35; 
14F43; 
53C05; 
53C12; 
53D17. 
}

\paragraph{Acknowledgments.} The authors are very grateful to Jes\'us F. Espinoza, Rafael Ramos Figueroa, and Ricardo A. S\'aenz for helpful comments on some aspects of this work. Partially supported by Consejo Nacional de Ciencia y Tecnolog\'ia (CONACYT) under the grant 219631.


\section{Introduction}\label{intro}

In this paper, we study the cohomology of a class of brigaded cochain complexes arising in several geometric contexts related to Poisson geometry and its applications.

Recall that every Poisson manifold $(M,\Pi)$ induces on the algebra $\Gamma(\wedge^{\bullet}TM)$ of multivector fields a coboundary operator $\operatorname{d}_{\Pi}:\Gamma(\wedge^{\bullet}TM)\rightarrow\Gamma(\wedge^{\bullet+1}TM)$, given as the adjoint of $\Pi$ with respect the Schouten-Nijenhuis bracket, $\operatorname{d}_{\Pi}A:=[\Pi,A]$. The resulting cochain complex $(\Gamma(\wedge^{\bullet}TM),\operatorname{d}_{\Pi})$ is the so-called \emph{Lichnerowicz Poisson complex}, and its cohomology $H^{\bullet}_{\Pi}(M)$ is the \emph{Poisson cohomology} of the Poisson manifold $(M,\Pi)$. 
It is also important to point out that the Lichnerowicz-Poisson complex of a Poisson manifold coincides with the cochain complex of its corresponding cotangent Lie algebroid.

Dirac structures are generalizations of Poisson manifolds. A Dirac structure on $M$ is a maximally isotropic subbundle $D\subset\mathbb{T}M$ of the Pontryagin bundle $\mathbb{T}M:=TM\oplus T^{*}M$ which is closed under the Dorfman bracket. The Dorfman bracket induces on $D$ a Lie algebroid structure, so its dual exterior algebra is endowed with a cochain complex structure $(\wedge^{\bullet}D^{*},\operatorname{d}_{D})$. Hence, it makes sense to consider the Lie algebroid cohomology of $D$. In particular, the graph of every Poisson structure $\Pi$ on $M$ is a Dirac structure which is isomorphic to $T^{*}M$ as a Lie algebroid.

Unlike some other cohomological theories, the Lie algebroid cohomology of Poisson or Dirac manifolds is in general hard to compute. Only few general results are known for the computation of the cohomology of certain classes of Poisson manifolds \cite{Conn-85,Lan2-16,Lan1-16,Va-90,VK-88}, as well as some specific examples \cite{AG,Mon-02,Nak-95,Xu-92}.

In this work we present a framework which allows us to describe the Dirac and Poisson cohomology around presymplectic leaves, which is based on the coupling method for Dirac and Poisson manifolds \cite{DuWa-04,Va-04,Va2-04,Vo-01,Vo-05}. Recall that, in a tubular neighborhood $N\overset{\pi}{\rightarrow}S$ of a presymplectic leaf of a Dirac manifold $(M,D)$, the Dirac structure $D|_{N}$ is fully described by a triple of geometric data $(\gamma,\sigma,P)$ consisting of an Ehresmann connection $\gamma$, a horizontal 2-form $\sigma$, and a vertical Poisson structure $P$ on $N$ satisfying some integrability conditions. In particular, the geometric data induce bigraded operators $\D_{0,1}^{P}$, $\D_{1,0}^{\gamma}$, and $\D_{2,-1}^{\sigma}$ on the bigraded algebra $\mathcal{C}^{\bullet,\bullet}:=\Gamma(\wedge^{\bullet}T^{*}S)\otimes_{C^{\infty}(S)}\Gamma(\wedge^{\bullet}\ker\pi_{*})$ of differential forms on $S$ with values on vertical multivector fields on $N$. Then the integrability conditions for $(\gamma,\sigma,P)$ imply that $\D:=\D_{0,1}^{P}+\D_{1,0}^{\gamma}+\D_{2,-1}^{\sigma}$ is a coboundary operator on $\mathcal{C}$ such that the corresponding cochain complex $(\mathcal{C},\D)$ is isomorphic to the complex $(\wedge^{\bullet}D^{*},\operatorname{d}_{D})$ on $N$ \cite{CrMa-13,Marcut-13}.

Algebraically, our previous discussion means that the Lie algebroid cohomology of a Dirac or Poisson manifold around a presymplectic leaf can be described in the framework of cochain complexes $(\mathcal{C},\D)$ with a bigrading $\mathcal{C}^{\bullet,\bullet} = \bigoplus_{p,q\in\mathbb{Z}}\mathcal{C}^{p,q}$ such that the coboundary operator takes the form $\D = \D_{0,1} + \D_{1,0} + \D_{2,-1}$.

The most important contribution of this work is to provide a scheme for the computation of Poisson cohomology in the semilocal context. To this end, we have derived a general procedure which allows to compute the cohomology of a bigraded complex $(\mathcal{C}^{\bullet,\bullet},\D)$ with $\D = \D_{0,1} + \D_{1,0} + \D_{2,-1}$, as described above. Here we present the results we have obtained in the cases of first, second, and third cohomology, but our procedure may be applied to derive similar results in any degree. In particular, we have recovered the results developed in \cite{VeVo-18} for the cohomology of degree 1.

Among our main results, we mention the following.

\begin{theorem*}[First Cohomology]
We have the following short exact sequence
\[
\xymatrix@=1.4em{
0\ar[r]&H^{1}(\mathcal{N}_{0},\overline{\D})\ar@{->}[r]&H^{1}(\mathcal{C},\D)\ar@{->}[r]&\frac{\ker(\rho_{1})}{B^{1}(\mathcal{C}^{0,\bullet},\D_{0,1})}\ar[r]&0,
}
\]
which describe the first cohomology of a bigraded cochain complex $(\mathcal{C}^{\bullet,\bullet},\D)$.
\end{theorem*}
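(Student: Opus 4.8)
The plan is to realize the stated sequence as the piece, around total degree $1$, of the long exact sequence attached to a short exact sequence of cochain complexes coming from the horizontal filtration. Since $\mathcal{C}^{p,q}=0$ unless $p,q\geq 0$, and since each of $\D_{0,1}$, $\D_{1,0}$, $\D_{2,-1}$ raises the form-degree $p$ by $0$, $1$, $2$ respectively, the subspace $\mathcal{C}_{\geq 1}:=\bigoplus_{p\geq 1}\mathcal{C}^{p,\bullet}$ is a $\D$-subcomplex, and the quotient $\mathcal{C}/\mathcal{C}_{\geq1}$ is exactly the column complex $(\mathcal{C}^{0,\bullet},\D_{0,1})$, because $\D_{1,0}$ and $\D_{2,-1}$ send $\mathcal{C}^{0,\bullet}$ into $\mathcal{C}_{\geq1}$ and hence vanish in the quotient. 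This produces the short exact sequence of complexes $0\to\mathcal{C}_{\geq1}\to\mathcal{C}\to(\mathcal{C}^{0,\bullet},\D_{0,1})\to0$.

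First I would write down the associated long exact sequence, $\cdots\to H^{0}(\mathcal{C}^{0,\bullet},\D_{0,1})\xrightarrow{\delta^{0}}H^{1}(\mathcal{C}_{\geq1})\xrightarrow{\alpha}H^{1}(\mathcal{C},\D)\xrightarrow{\beta}H^{1}(\mathcal{C}^{0,\bullet},\D_{0,1})\xrightarrow{\delta^{1}}H^{2}(\mathcal{C}_{\geq1})\to\cdots$, and extract from it the canonical short exact sequence $0\to H^{1}(\mathcal{C}_{\geq1})/\operatorname{im}\delta^{0}\to H^{1}(\mathcal{C},\D)\to\ker\delta^{1}\to0$, using exactness at the three relevant spots. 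It then remains to identify the two outer terms with the objects in the statement.

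For the kernel term I would compute directly that $\mathcal{C}_{\geq1}^{0}=0$ (there is no $\mathcal{C}^{p,q}$ with $p\geq1$, $q\geq0$ and $p+q=0$) and $\mathcal{C}_{\geq1}^{1}=\mathcal{C}^{1,0}$, so that $H^{1}(\mathcal{C}_{\geq1})=\{a_{1,0}\in\mathcal{C}^{1,0}:\D_{0,1}a_{1,0}=0,\ \D_{1,0}a_{1,0}=0\}$, which is precisely $\ker(\overline{\D})$ in degree $1$ of the complex $(\mathcal{N}_{0},\overline{\D})$ of bottom-row $\D_{0,1}$-cocycles. Chasing $\delta^{0}$ on $f\in H^{0}(\mathcal{C}^{0,\bullet},\D_{0,1})=\ker(\D_{0,1}|_{\mathcal{C}^{0,0}})=\mathcal{N}_{0}^{0}$ gives $\delta^{0}[f]=[\D f]=[\D_{1,0}f]$, hence $\operatorname{im}\delta^{0}=\D_{1,0}(\mathcal{N}_{0}^{0})=B^{1}(\mathcal{N}_{0},\overline{\D})$ and $H^{1}(\mathcal{C}_{\geq1})/\operatorname{im}\delta^{0}\cong H^{1}(\mathcal{N}_{0},\overline{\D})$. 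For the cokernel term, the connecting map sends a representative $a_{0,1}\in Z^{1}(\mathcal{C}^{0,\bullet},\D_{0,1})$ to the class of $\D a_{0,1}=\D_{1,0}a_{0,1}+\D_{2,-1}a_{0,1}$ in $H^{2}(\mathcal{C}_{\geq1})$; I would verify, using the bidegree components of $\D^{2}=0$ (in particular $\D_{0,1}\D_{1,0}+\D_{1,0}\D_{0,1}=0$ and $\D_{1,0}^{2}+\D_{0,1}\D_{2,-1}+\D_{2,-1}\D_{0,1}=0$), that this is a genuine $\mathcal{C}_{\geq1}$-cocycle, that it annihilates $B^{1}(\mathcal{C}^{0,\bullet},\D_{0,1})$, and that it coincides with the previously defined map $\rho_{1}$. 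Then $\ker\delta^{1}=\ker(\rho_{1})/B^{1}(\mathcal{C}^{0,\bullet},\D_{0,1})$, which closes the identification.

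The homological algebra itself is routine; the \textbf{main obstacle} is the bigraded bookkeeping needed to match the abstract connecting homomorphisms with the concrete maps $\overline{\D}$ and $\rho_{1}$ and to recognize $H^{2}(\mathcal{C}_{\geq1})$ as the correct target of $\rho_{1}$. The one delicate point is verifying that $\D a_{0,1}$ represents a class in $H^{2}(\mathcal{C}_{\geq1})$ which vanishes exactly when $a_{0,1}$ extends to a full $\D$-cocycle $a_{0,1}+a_{1,0}$; this is where the integrability relations among $\D_{0,1}$, $\D_{1,0}$, $\D_{2,-1}$ enter, and where one must check that the residual freedom in the choice of $a_{1,0}$ is precisely what is quotiented out on passing to cohomology.
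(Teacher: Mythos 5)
Your argument is correct, but it takes a genuinely different route from the paper's. Both exploit the same filtration, yet the paper never forms the long exact sequence of a quotient complex: it proves Theorem \ref{teo:H1} by specializing its general machinery --- the $3\times3$ diagrams of Proposition \ref{prop:Diagram} and Theorem \ref{teo:Diagram}, built by directly projecting cocycles and coboundaries with the maps $\pi_{q}$ --- to $k=1$, and then identifying the corner modules via \eqref{eq:ZC}, \eqref{eq:Cobound}, the identity $\mathcal{B}^{1}_{0}\cap\mathcal{C}^{1,0}=B^{1}(\mathcal{N}_{0},\overline{\D})$, and Lemma \ref{lemma:Rho}. You instead observe that your $\mathcal{C}_{\geq1}$ (the paper's $F^{1}\mathcal{C}$) is a $\D$-subcomplex with quotient $(\mathcal{C}^{0,\bullet},\D_{0,1})$, and you extract the sequence from the associated long exact sequence; your computations $H^{1}(\mathcal{C}_{\geq1})=Z^{1}(\mathcal{N}_{0},\overline{\D})$ and $\operatorname{im}\delta^{0}=B^{1}(\mathcal{N}_{0},\overline{\D})$ are correct, and the resulting short exact sequence is the paper's. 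What your route buys: exactness comes for free from standard homological algebra, and $\rho_{1}$ acquires a conceptual interpretation as, in effect, a connecting homomorphism. What the paper's route buys: the projection machinery also produces the top two rows of the diagram in Theorem \ref{teo:H1} (explicit descriptions of $Z^{1}(\mathcal{C},\D)$ and $B^{1}(\mathcal{C},\D)$, hence the splittings of Corollary \ref{cor:H1}), and it extends uniformly to every degree $k$, whereas for $k\geq2$ your single long exact sequence leaves the term $H^{k}(\mathcal{C}_{\geq1})$ unresolved, and iterating the construction on $\mathcal{C}_{\geq1}$ essentially rebuilds the paper's family of diagrams. One point to tighten: $\delta^{1}$ cannot literally ``coincide with'' $\rho_{1}$, since they take values in different modules ($H^{2}(\mathcal{C}_{\geq1})$ versus $H^{2}(\mathcal{N}_{0},\overline{\D})$) and $\rho_{1}$ is defined on $\mathcal{A}^{1}$ rather than on a quotient; the statement you actually need is $\ker\delta^{1}=\ker(\rho_{1})/B^{1}(\mathcal{C}^{0,\bullet},\D_{0,1})$, which does hold: $\delta^{1}[Y]=0$ means $\D Y=\D\beta$ exactly for some $\beta\in\mathcal{C}^{1,0}$, while $Y\in\ker(\rho_{1})$ only requires $\D_{2,-1}Y+\D_{1,0}\alpha_{Y}$ to be a coboundary in $(\mathcal{N}_{0},\overline{\D})$; the two conditions are reconciled by replacing $\alpha_{Y}$ with $\alpha_{Y}-\mu$ for $\mu\in\mathcal{N}^{1,0}$ --- the ``residual freedom'' you flag at the end, which is precisely the argument in the paper's Lemma \ref{lemma:Rho} showing $\mathcal{Z}^{1}_{1}=\ker(\rho_{1})$.
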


This result on the first cohomology of $(\mathcal{C},\D)$ is the bottom row of the diagram appearing in Theorem \ref{teo:H1}, and involves the map $\rho_{1}:\mathcal{A}^{1}\rightarrow H^{2}(\mathcal{N}_{0},\overline\D)$, which is related to the second cohomology of $(\mathcal{N}_{0},\overline\D)$.

\begin{theorem*}[Second Cohomology]
The following are short exact sequences which allow to describe the second cohomology of a bigraded cochain complex $(\mathcal{C}^{\bullet,\bullet},\D)$:
\begin{gather*}
\xymatrix@=1.4em{
0\ar[r]&\frac{Z^{2}(\mathcal{N}_{0},\overline\D)}{B^{2}(\mathcal{C},\D)\cap\mathcal{C}^{2,0}}\ar@{->}[r]&H^{2}(\mathcal{C},\D)\ar@{->}[r]&\frac{\ker(\rho_{2})}{\mathcal{B}^{2}_{1}}\ar[r]&0,
}\\
\xymatrix@=1.4em{
0\ar[r]&\frac{\ker(\varrho_{2})}{\mathcal{B}^{2}_{1}\cap\mathcal{C}^{1,1}}\ar@{->}[r]&\frac{\ker(\rho_{2})}{\mathcal{B}^{2}_{1}}\ar@{->}[r]&\frac{\mathcal{Z}^{2}_{2}}{B^{2}(\mathcal{C}^{0,\bullet},\D_{0,1})}\ar[r]&0.
}
\end{gather*}
\end{theorem*}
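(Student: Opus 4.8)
The plan is to exploit the decreasing filtration $F^{p}\mathcal{C}:=\bigoplus_{p'\geq p}\mathcal{C}^{p',\bullet}$, which $\D$ preserves because each of $\D_{0,1},\D_{1,0},\D_{2,-1}$ is nondecreasing in the first degree. First I would expand the cocycle condition on a total-degree-$2$ cochain $c=c^{2,0}+c^{1,1}+c^{0,2}$ into its homogeneous components, namely $\D_{0,1}c^{0,2}=0$, $\D_{1,0}c^{0,2}+\D_{0,1}c^{1,1}=0$, $\D_{2,-1}c^{0,2}+\D_{1,0}c^{1,1}+\D_{0,1}c^{2,0}=0$ and $\D_{2,-1}c^{1,1}+\D_{1,0}c^{2,0}=0$, the would-be $(4,-1)$ equation being void since $\mathcal{C}^{p,q}=0$ for $q<0$. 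Expanding $c=\D(b^{1,0}+b^{0,1})$ in the same way describes $B^{2}(\mathcal{C},\D)$, while expanding $\D^{2}=0$ by bidegree produces the structural identities, in particular $\D_{1,0}^{2}=-\D_{0,1}\D_{2,-1}-\D_{2,-1}\D_{0,1}$ and $\D_{1,0}\D_{2,-1}+\D_{2,-1}\D_{1,0}=0$, which are the only algebraic inputs. I would also record that $\mathcal{N}_{0}=\{z\in\mathcal{C}^{\bullet,0}:\D_{0,1}z=0\}$ is a genuine subcomplex on which $\D$ restricts to $\overline{\D}=\D_{1,0}$, so that $Z^{2}(\mathcal{N}_{0},\overline{\D})$ is exactly the set of $\D$-cocycles lying in $\mathcal{C}^{2,0}$.

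The first sequence is then the filtration sequence $0\to F^{2}H^{2}\to H^{2}(\mathcal{C},\D)\to H^{2}(\mathcal{C},\D)/F^{2}H^{2}\to 0$. I would identify its left term with $Z^{2}(\mathcal{N}_{0},\overline{\D})/(B^{2}(\mathcal{C},\D)\cap\mathcal{C}^{2,0})$: a class lies in $F^{2}H^{2}$ iff it has a representative entirely in $\mathcal{C}^{2,0}$, i.e. a $\D$-cocycle in $\mathcal{C}^{2,0}$, and two such agree iff they differ by a global coboundary landing in $\mathcal{C}^{2,0}$. For the quotient I would let $\mathcal{A}^{2}$ be the space of low partial cocycles $c^{0,2}+c^{1,1}$ solving the two equations not involving $c^{2,0}$, and define $\rho_{2}$ on $\mathcal{A}^{2}$ as the obstruction to completing such a datum to a full $\D$-cocycle; the residual term of the $(3,0)$ equation lands in the $q=0$ row $\mathcal{C}^{3,0}$ and, by the structural identities, is $\overline{\D}$-closed, so it represents a class in $H^{3}(\mathcal{N}_{0},\overline{\D})$ independent of the chosen primitives. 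Then $\ker(\rho_{2})$ is precisely the extendable data and $\mathcal{B}^{2}_{1}$ the part of it coming from coboundaries, giving $\ker(\rho_{2})/\mathcal{B}^{2}_{1}\cong H^{2}(\mathcal{C},\D)/F^{2}H^{2}$.

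The second sequence refines this quotient through its own one-step filtration $0\to F^{1}H^{2}/F^{2}H^{2}\to H^{2}/F^{2}H^{2}\to H^{2}/F^{1}H^{2}\to 0$. The surjection onto $\mathcal{Z}^{2}_{2}/B^{2}(\mathcal{C}^{0,\bullet},\D_{0,1})$ is the bottom-corner projection $[c]\mapsto[c^{0,2}]$, whose image $\mathcal{Z}^{2}_{2}$ consists of the $\D_{0,1}$-cocycles in $\mathcal{C}^{0,2}$ that extend to full $\D$-cocycles and whose indeterminacy is $B^{2}(\mathcal{C}^{0,\bullet},\D_{0,1})$; its kernel is represented by cochains with $c^{0,2}=0$, i.e. by a single $c^{1,1}$, and is cut out by the secondary obstruction $\varrho_{2}$ attached to the $(2,1)$ equation, yielding the term $\ker(\varrho_{2})/(\mathcal{B}^{2}_{1}\cap\mathcal{C}^{1,1})$. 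I would establish exactness of both sequences by a diagram chase on the commutative ladder built from the component equations, or equivalently by the snake lemma applied to the truncations $F^{p+1}\hookrightarrow F^{p}\twoheadrightarrow F^{p}/F^{p+1}$; this is the same mechanism that produced the bottom row of the diagram in Theorem~\ref{teo:H1}, now carried one total degree higher.

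The main obstacle I anticipate lies in the obstruction maps $\rho_{2},\varrho_{2}$ and the boundary spaces. The delicate points are: (i) well-definedness of $\rho_{2}$ and $\varrho_{2}$, i.e. that the residual terms are $\overline{\D}$-closed and that their classes are independent of the choices of $c^{1,1}$, $c^{2,0}$ and of vertical primitives, which is exactly where the identities $\D_{1,0}^{2}=-\D_{0,1}\D_{2,-1}-\D_{2,-1}\D_{0,1}$ and $\D_{1,0}\D_{2,-1}+\D_{2,-1}\D_{1,0}=0$ are used; and (ii) surjectivity at the right-hand ends, where, from a datum in $\ker(\rho_{2})$ or in $\mathcal{Z}^{2}_{2}$, one must actually build the missing higher components and show that two completions differ by a genuine $\D$-coboundary. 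Equivalently, the crux is to prove that vanishing of the obstruction class and solvability of the remaining cocycle equations coincide, and that the spaces $\mathcal{B}^{2}_{1}$, $\mathcal{B}^{2}_{1}\cap\mathcal{C}^{1,1}$, $B^{2}(\mathcal{C},\D)\cap\mathcal{C}^{2,0}$ and $B^{2}(\mathcal{C}^{0,\bullet},\D_{0,1})$ match the images of the connecting maps.
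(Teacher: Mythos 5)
Your proposal is correct and is essentially the paper's own argument: your two filtration sequences are precisely the bottom rows of the $q=0$ and $q=1$ diagrams of Theorem~\ref{teo:Diagram} at $k=2$, and the delicate points you flag are exactly what the paper settles in Lemma~\ref{lemma:SpaceB} and Lemma~\ref{lemma:Rho} ($\overline{\D}$-closedness of the $(3,0)$-residual and the identity $\mathcal{Z}^{2}_{1}=\ker(\rho_{2})$), Proposition~\ref{prop:Precocyc} (every pre-cocycle extends to a genuine cocycle, which gives your surjectivity statements), and the identifications \eqref{eq:ZC} and \eqref{eq:Cobound}. The one slip is your definition of $\mathcal{A}^{2}$: as in \eqref{eq:SpacesAB}, it must also require solvability of the $(2,1)$-equation $\D_{0,1}c^{2,0}+\D_{1,0}c^{1,1}+\D_{2,-1}c^{0,2}=0$ for some $c^{2,0}$, since otherwise the $(3,0)$-residual defining $\rho_{2}$ does not even exist (and its $\overline{\D}$-closedness is proved using that very equation); this is harmless for the theorem, because only $\ker(\rho_{2})$ --- your ``extendable data'' --- enters the statement.
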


This result consists of the bottom rows of the diagrams appearing in Theorem \ref{teo:H2}. Moreover, it involves the maps $\rho_{2}:\mathcal{A}^{2}\rightarrow H^{3}(\mathcal{N}_{0},\overline\D)$ and $\varrho_{2}:\mathcal{J}^{2}\rightarrow H^{3}(\mathcal{N}_{0},\overline\D)$, related to the third cohomology of $(\mathcal{N}_{0},\overline\D)$. Also, the subspace $\mathcal{Z}^{2}_{2}$ is related to the 3-coboundaries of $(\mathcal{N}_{1},\overline\D)$.

\begin{theorem*}[Third Cohomology]
The third cohomology of $(\mathcal{C}^{\bullet,\bullet},\D)$ is described by the following short exact sequences:
\begin{gather*}
\xymatrix@=1.4em{
0\ar[r]&\frac{Z^{3}(\mathcal{N}_{0},\overline\D)}{B^{3}(\mathcal{C},\D)\cap\mathcal{C}^{3,0}}\ar@{->}[r]&H^{3}(\mathcal{C},\D)\ar@{->}[r]&\frac{\ker(\rho_{3})}{\mathcal{B}^{3}_{1}}\ar[r]&0,
}\\
\xymatrix@=1.4em{
0\ar[r]&\frac{\ker(\varrho_{3})}{\mathcal{B}^{3}_{1}\cap\mathcal{C}^{2,1}}\ar@{->}[r]&\frac{\ker(\rho_{3})}{\mathcal{B}^{3}_{1}}\ar@{->}[r]&\frac{\mathcal{Z}^{3}_{2}}{\mathcal{B}^{3}_{2}}\ar[r]&0,
}\\
\xymatrix@=1.4em{
0\ar[r]&\frac{\mathcal{B}^{3}_{2}\cap\mathcal{C}^{1,2}}{\mathcal{Z}^{3}_{2}\cap\mathcal{C}^{1,2}}\ar@{->}[r]&\frac{\mathcal{Z}^{3}_{2}}{\mathcal{B}^{3}_{2}}\ar@{->}[r]&\frac{\mathcal{Z}^{3}_{3}}{B^{3}(\mathcal{C}^{0,\bullet},\D_{0,1})}\ar[r]&0.
}
\end{gather*}
\end{theorem*}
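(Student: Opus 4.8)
\emph{Proof strategy.} The plan is to run the spectral sequence of the decreasing filtration of $(\mathcal{C},\D)$ by horizontal degree, $F^{p}\mathcal{C}:=\bigoplus_{p'\geq p}\mathcal{C}^{p',\bullet}$, exactly as in the proofs of Theorems \ref{teo:H1} and \ref{teo:H2}, but carried one total degree higher. Since each of $\D_{0,1}$, $\D_{1,0}$, $\D_{2,-1}$ has non-negative horizontal bidegree, $\D$ preserves $F^{\bullet}$, so it induces a filtration $H^{3}(\mathcal{C},\D)=F^{0}H^{3}\supseteq F^{1}H^{3}\supseteq F^{2}H^{3}\supseteq F^{3}H^{3}\supseteq 0$. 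A cochain of total degree $3$ has horizontal components only in degrees $p=0,1,2,3$, so this filtration has exactly four steps, and the three short exact sequences in the statement are precisely its three successive extensions $0\to F^{p+1}H^{3}\to F^{p}H^{3}\to F^{p}H^{3}/F^{p+1}H^{3}\to 0$, peeling off the layers $p=3$, then $p=2$, then $p=1$, and leaving $p=0$. Concretely, the first sequence extracts $F^{3}H^{3}$; the second decomposes the quotient $F^{0}H^{3}/F^{3}H^{3}=\ker(\rho_{3})/\mathcal{B}^{3}_{1}$ by splitting off its $p=2$ layer; and the third decomposes the remaining $F^{0}H^{3}/F^{2}H^{3}=\mathcal{Z}^{3}_{2}/\mathcal{B}^{3}_{2}$ into its $p=1$ and $p=0$ layers.

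Next I would identify each graded piece with the named subquotient. For the top layer, a pure cochain $c=c_{3,0}\in\mathcal{C}^{3,0}$ is a $\D$-cocycle iff $\D_{0,1}c_{3,0}=0$ and $\D_{1,0}c_{3,0}=0$, the equation coming from $\D_{2,-1}$ being automatic because $\mathcal{C}^{\bullet,-1}=0$; this is exactly $Z^{3}(\mathcal{N}_{0},\overline\D)$ for the bottom-row complex $\mathcal{N}_{0}=(\ker\D_{0,1}\cap\mathcal{C}^{\bullet,0},\D_{1,0})$, so $F^{3}H^{3}=Z^{3}(\mathcal{N}_{0},\overline\D)/(B^{3}(\mathcal{C},\D)\cap\mathcal{C}^{3,0})$. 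The lower layers are governed by the transgression maps $\rho_{3}$ and $\varrho_{3}$: because the horizontal jump of $\D_{2,-1}$ is $+2$, a cochain that is closed for $\D_{0,1}$ and $\D_{1,0}$ up to a given filtration level need not extend to a full $\D$-cocycle, and the obstruction to extending it two steps further is the class of its residual $\D_{2,-1}$-term in $H^{\bullet}(\mathcal{N}_{0},\overline\D)$ — this is the $d_{2}$-differential of the spectral sequence, which is exactly $\rho_{3}$ (resp. $\varrho_{3}$). Thus $\ker(\rho_{3})$ and $\ker(\varrho_{3})$ select the representatives at levels $p=1$ and $p=2$ that survive, while their indeterminacy, together with the level-$j$ truncated coboundaries, produces the nested spaces $\mathcal{B}^{3}_{1}\subseteq\mathcal{B}^{3}_{2}$ and the refined cocycle spaces $\mathcal{Z}^{3}_{2}\supseteq\mathcal{Z}^{3}_{3}$; the final quotient $\mathcal{Z}^{3}_{3}/B^{3}(\mathcal{C}^{0,\bullet},\D_{0,1})$ is the $p=0$ layer, detected purely in the vertical complex $(\mathcal{C}^{0,\bullet},\D_{0,1})$.

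With the pieces identified, each short exact sequence follows from the corresponding inclusion of filtered cocycle-and-coboundary spaces by a standard isomorphism-theorem argument, together with the verification that the maps in the statement are induced by the inclusions $F^{p+1}\hookrightarrow F^{p}$ and by passage to $F^{0}/F^{p+1}$. I expect the main obstacle to be the bookkeeping that makes $\rho_{3}$ and $\varrho_{3}$ well defined on cohomology — that the residual $\D_{2,-1}$-term is independent of the choices of lift and of $\D_{1,0}$-primitive, so that it descends to a class in $H^{\bullet}(\mathcal{N}_{0},\overline\D)$ — and, relatedly, the proof of exactness at the middle terms, where one must show that a class lies in $\ker(\rho_{3})$ (resp. $\ker(\varrho_{3})$) precisely when its representative extends one filtration step further to a genuine $\D$-cocycle, and that the kernel of each transgression coincides with the image coming from the deeper filtration layer. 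Because $\D_{2,-1}$ couples non-adjacent filtration levels, these diagram chases are longer than in the degree-$1$ and degree-$2$ cases treated in Theorems \ref{teo:H1} and \ref{teo:H2}, but they are structurally identical; no phenomenon beyond degree-$3$ bookkeeping should arise, which is why the same scheme yields analogous sequences in every degree.
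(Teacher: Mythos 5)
Your strategy is essentially the paper's own proof: the three sequences are the bottom rows of the diagrams of Theorem \ref{teo:Diagram} specialized to $k=3$ (equivalently, by Lemma \ref{lemma:step1}, the successive extensions of the filtration of $H^{3}(\mathcal{C},\D)$ that you describe), and the identification of the terms is exactly the bookkeeping you flag as the main obstacle --- equations \eqref{eq:ZC} and \eqref{eq:Cobound} together with Lemma \ref{lemma:Rho}, which gives $\mathcal{Z}^{3}_{1}=\ker(\rho_{3})$ and $\ker(\varrho_{3})=\mathcal{Z}^{3}_{1}\cap\mathcal{C}^{2,1}$. As a byproduct, your identification of the $p=1$ layer as $E^{1,2}_{\infty}\cong\frac{\mathcal{Z}^{3}_{2}\cap\mathcal{C}^{1,2}}{\mathcal{B}^{3}_{2}\cap\mathcal{C}^{1,2}}$ is the correct one: the kernel term $\frac{\mathcal{B}^{3}_{2}\cap\mathcal{C}^{1,2}}{\mathcal{Z}^{3}_{2}\cap\mathcal{C}^{1,2}}$ printed in the third sequence has cocycles and coboundaries transposed (compare Corollary \ref{cor:H3}), so your argument in fact corrects a typo in the statement.
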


These short exact sequences are the bottom rows appearing in the diagrams of Theorem \ref{teo:H3}. We note that this result on the third cohomology of $(\mathcal{C}^{\bullet,\bullet},\D)$ involves the maps $\rho_{3}:\mathcal{A}^{3}\rightarrow H^{4}(\mathcal{N}_{0},\overline\D)$ and $\varrho_{3}:\mathcal{J}^{3}\rightarrow H^{4}(\mathcal{N}_{0},\overline\D)$, related to the fourth cohomology of $(\mathcal{N}_{0},\overline\D)$. Also, the subspace $\mathcal{Z}^{3}_{3}$ is related to the 4-coboundaries of $(\mathcal{N}_{1},\overline\D)$.

This class of cochain complexes also appears in the context of transitive Lie algebroids \cite{ItskovYuraKarasev}, regular Poisson manifolds \cite{Va-94}, Poisson foliations, and the de Rham complex of fibred manifolds \cite{Bra-11}. Also, this framework has been also applied in the description of the first cohomology of Poisson manifolds around symplectic leaves \cite{VeVo-18} as well as the modular class of coupling Poisson structures on foliated manifolds \cite{PeVeVo-18}.

\section{The cohomology of a bigraded cochain complex}\label{sec:BCC}

\paragraph{Notations and conventions.} Recall that a cochain complex is a pair $(\mathcal{C}^{\bullet},\D)$ consisting of a graded ($\mathbb{Z}$-graded) $\mathbb{R}$-linear space $\mathcal{C}^{\bullet} = \bigoplus_{k\in\mathbb{Z}}\mathcal{C}^{k}$ and an $\mathbb{R}$-linear operator $\D\in\mathrm{End}^{1}_{\mathbb{R}}(\mathcal{C})$ on $\mathcal{C}$ of degree $1$ such that $\D^{2}=0$.

Suppose that, in addition, $\mathcal{C}$ is a bigraded ($\mathbb{Z}^{2}$-graded) linear space such that the bigrading is compatible with the original $\mathbb{Z}$-grading in the following sense:
\begin{align}\label{eq:bigrading}
  \mathcal{C}^{k} = \bigoplus_{p+q=k}\mathcal{C}^{p,q} && \forall k\in\mathbb{Z}.
\end{align}
We also assume that $\mathcal{C}^{p,q}=\{0\}$ whenever $p$ or $q$ is negative. Moreover, suppose that the coboundary operator $\D$ splits in the sum of three bigraded operators with respect to the bigrading \eqref{eq:bigrading},
\begin{align}\label{eq:DecomPart}
  \D = \D_{2,-1} + \D_{1,0} + \D_{0,1},
\end{align}
where $\D_{i,j}(\mathcal{C}^{p,q})\subseteq\mathcal{C}^{p+i,q+j}$ for $(i,j)\in\{(2,-1),(1,0),(0,1)\}$. The right-hand side of \eqref{eq:DecomPart} is called the \emph{bigraded decomposition} of $\D$.

In terms of the decomposition \eqref{eq:DecomPart}, the coboundary condition $\D^{2}=0$ reads
\begin{align}
  \D_{2,-1}^{2} &= 0,\label{eq:Cob1}\\
  \D_{2,-1}\D_{1,0} + \D_{1,0}\D_{2,-1} &= 0,\label{eq:Cob2}\\
  \D_{2,-1}\D_{0,1} + \D_{0,1}\D_{2,-1} + \D_{1,0}^{2} &=0,\label{eq:Cob3}\\
  \D_{1,0}\D_{0,1} + \D_{0,1}\D_{1,0} &=0,\label{eq:Cob4}\\
  \D_{0,1}^{2} &=0.\label{eq:Cob5}
\end{align}
Here, the left-hand sides of equations \eqref{eq:Cob1}-\eqref{eq:Cob5} are the bigraded components of $\D^{2}$. In particular, \eqref{eq:Cob5} implies that $(\mathcal{C}^{p,\bullet},\D_{0,1})$ is a cochain complex for each $p\in\mathbb{Z}$. For any cochain complex, we use the notation $Z^{\bullet}$, $B^{\bullet}$, and $H^{\bullet}$ to indicate the linear spaces of cocycles, coboundaries, and cohomology, respectively.

\paragraph{Spectral sequence.} Consider the decreasing filtration $F$ of $\mathcal{C}$ given by
\[
F^{p}\mathcal{C} := \bigoplus_{\substack{i,j\in\mathbb{Z}\\i\geq p}}\mathcal{C}^{i,j}
.
\]
For every subspace $S\subseteq\mathcal{C}$, denote $F^{p}S := F^{p}\mathcal{C}\cap S$
. In particular, $F^{p}\mathcal{C}^{k} = \bigoplus_{i\geq p}\mathcal{C}^{i,k-i}$
. Moreover, since $\mathcal{C}^{\bullet,\bullet}$ lies in the first quadrant, we have $F^{0}\mathcal{C}^{k}=\mathcal{C}^{k}$ and $F^{k+1}\mathcal{C}^{k}=\{0\}$, so the filtration is bounded. Furthermore, it follows from the bigraded decomposition \eqref{eq:DecomPart} that $\D(F^{p}\mathcal{C})\subseteq F^{p}\mathcal{C}$ for all $p\in\mathbb{Z}$. Hence, the triple $(\mathcal{C}^{\bullet},\D,F)$ is a graded filtered complex.

Let $(E^{\bullet,\bullet}_{r},d_{r})$ be the spectral sequence associated with $(\mathcal{C}^{\bullet},\D,F)$, that is, for each $p,q,r\in\mathbb{Z}$, $E^{p,q}_{r} := \frac{Z^{p,q}_{r}+F^{p+1}\mathcal{C}^{p+q}}{B^{p,q}_{r-1}+F^{p+1}\mathcal{C}^{p+q}}$ \cite[Eq. (2.46)]{DZ}, where
\[
Z^{p,q}_{r}:=F^{p}\mathcal{C}^{p+q}\cap\D^{-1}(F^{p+r}\mathcal{C}^{p+q}), && B^{p,q}_{r-1}:=F^{p}\mathcal{C}^{p+q}\cap\D(F^{p-r+1}\mathcal{C}^{p+q}),
\]
the sums $Z^{p,q}_{r}+F^{p+1}\mathcal{C}^{p+q}$, and $B^{p,q}_{r-1}+F^{p+1}\mathcal{C}^{p+q}$ are as $\mathbb{R}$-vector subspaces of $\mathcal{C}^{\bullet}$, and $d_{r}:E^{p,q}_{r}\rightarrow E^{p+r,q+1-r}_{r}$ is induced by the restriction of $\D$ to $Z^{p,q}_{r}$. In particular, $E^{p,q}_{0} = \frac{F^{p}\mathcal{C}^{p+q}}{F^{p+1}\mathcal{C}^{p+q}} \cong \mathcal{C}^{p,q}$, so $(E^{\bullet,\bullet}_{r},d_{r})$ is a first quadrant spectral sequence. Therefore, $E^{p,q}_{N}=E^{p,q}_{\infty}$ for all $N\geq\max\{p+1,q+2\}$, where
\[
E^{p,q}_{\infty} := \frac{Z^{p+q}(\mathcal{C},\D)\cap F^{p}\mathcal{C}^{p+q}+F^{p+1}\mathcal{C}^{p+q}}{B^{p+q}(\mathcal{C},\D)\cap F^{p}\mathcal{C}^{p+q}+F^{p+1}\mathcal{C}^{p+q}}.
\]

Since the filtration $F$ is bounded, the spectral sequence converges to the cohomology of $(\mathcal{C}^{\bullet},\D)$. Furthermore, taking into account that $(\mathcal{C}^{\bullet},\D)$ is an $\mathbb{R}$-vector space, we get the following splitting for the $k$-th cohomology of $(\mathcal{C}^{\bullet},\D)$:
\begin{align}\label{eq:SplitHk1}
  H^{k}(\mathcal{C},\D) \cong \bigoplus_{p+q=k} E^{p,q}_{\infty}.
\end{align}

In what follows, we give a more explicit description of the summands in the splitting \eqref{eq:SplitHk1}. For each $q\in\mathbb{Z}$, define $G^{q}\mathcal{C} := \bigoplus_{\substack{i,j\in\mathbb{Z}\\j\geq q}}\mathcal{C}^{i,j}$, and consider the projection
\[
\pi_{q}:\mathcal{C}^{\bullet}\rightarrow G^{q}\mathcal{C}
\]
along the splitting induced by the bigrading. In particular, $\pi_{q}=\operatorname{Id}_{\mathcal{C}}$ if $q\leq0$. For simplicity, we use the same notation for the restriction of $\pi_{q}$ to any subspace of $\mathcal{C}$.

\begin{lemma}\label{lemma:step1}
  For each $p,q\in\mathbb{Z}$ such that $p+q=k$, we have $E^{p,q}_{\infty}\cong\frac{\pi_{q}(Z^{k}(\mathcal{C},\D))\cap\mathcal{C}^{p,q}}{\pi_{q}(B^{k}(\mathcal{C},\D))\cap\mathcal{C}^{p,q}}$.
\end{lemma}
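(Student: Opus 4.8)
The plan is to realize both sides of the claimed isomorphism as quotients of the single space $Z^{k}(\mathcal{C},\D)\cap F^{p}\mathcal{C}^{k}$ and then match denominators. The starting observation is the compatibility between the filtration $F$ (which controls the first index) and the projection $\pi_{q}$ (which truncates the second) in a fixed total degree $k=p+q$. Concretely, an element $s\in\mathcal{C}^{k}$ lies in $F^{p}\mathcal{C}^{k}$ precisely when its bihomogeneous components $s^{i,k-i}$ vanish for all $i<p$; and in that case $\pi_{q}(s)=s^{p,q}$, because the components kept by $\pi_{q}$ are exactly those with second index $\geq q$, i.e.\ first index $\leq p$, so the only component that also satisfies $i\geq p$ is the one of bidegree $(p,q)$. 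Thus on $F^{p}\mathcal{C}^{k}$ the map $\pi_{q}$ is nothing but ``take the leading term,'' and $\ker\bigl(\pi_{q}|_{F^{p}\mathcal{C}^{k}}\bigr)=F^{p+1}\mathcal{C}^{k}$.

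First I would prove a purely linear-algebraic identity valid for any subspace $S\subseteq\mathcal{C}^{k}$:
\[
\pi_{q}(S)\cap\mathcal{C}^{p,q}=\pi_{q}\bigl(S\cap F^{p}\mathcal{C}^{k}\bigr).
\]
The inclusion $\supseteq$ is immediate from the preceding observation. For $\subseteq$, if $w=\pi_{q}(s)\in\mathcal{C}^{p,q}$ with $s\in S$, then every component $s^{i,k-i}$ with $i<p$ must vanish, since otherwise $\pi_{q}(s)$ would carry a nonzero component of bidegree $(i,k-i)$ with $i<p$; hence $s\in F^{p}\mathcal{C}^{k}$ and $w\in\pi_{q}(S\cap F^{p}\mathcal{C}^{k})$. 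Restricting $\pi_{q}$ to $S\cap F^{p}\mathcal{C}^{k}$ gives a surjection onto this space whose kernel is $S\cap F^{p+1}\mathcal{C}^{k}$, so $\pi_{q}(S)\cap\mathcal{C}^{p,q}\cong\bigl(S\cap F^{p}\mathcal{C}^{k}\bigr)\big/\bigl(S\cap F^{p+1}\mathcal{C}^{k}\bigr)$.

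Next I would apply this identity with $S=Z^{k}(\mathcal{C},\D)$ and $S=B^{k}(\mathcal{C},\D)$; since $B^{k}\subseteq Z^{k}$, the right-hand quotient in the statement is well defined. Factoring the surjection $\pi_{q}\colon Z^{k}\cap F^{p}\mathcal{C}^{k}\twoheadrightarrow\pi_{q}(Z^{k})\cap\mathcal{C}^{p,q}$ through the further quotient by $\pi_{q}(B^{k})\cap\mathcal{C}^{p,q}$, the first isomorphism theorem yields
\[
\frac{\pi_{q}(Z^{k})\cap\mathcal{C}^{p,q}}{\pi_{q}(B^{k})\cap\mathcal{C}^{p,q}}\cong\frac{Z^{k}\cap F^{p}\mathcal{C}^{k}}{\bigl(B^{k}\cap F^{p}\mathcal{C}^{k}\bigr)+\bigl(Z^{k}\cap F^{p+1}\mathcal{C}^{k}\bigr)},
\]
where the kernel computation rests on the fact that $z^{p,q}=b^{p,q}$ forces $z-b\in Z^{k}\cap F^{p+1}\mathcal{C}^{k}$ (here $b\in B^{k}\subseteq Z^{k}$ is essential, so that $z-b$ remains a cocycle).

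Finally, I would rewrite the defining expression for $E^{p,q}_{\infty}$ in this same form. Considering the surjection $Z^{k}\cap F^{p}\mathcal{C}^{k}\to E^{p,q}_{\infty}$ sending $z$ to its class modulo $B^{k}\cap F^{p}\mathcal{C}^{k}+F^{p+1}\mathcal{C}^{k}$, one checks surjectivity directly (any representative differs from such a $z$ by an element of $F^{p+1}\mathcal{C}^{k}$) and computes the kernel to be $\bigl(B^{k}\cap F^{p}\mathcal{C}^{k}\bigr)+\bigl(Z^{k}\cap F^{p+1}\mathcal{C}^{k}\bigr)$, again invoking $B^{k}\subseteq Z^{k}$ to intersect the $F^{p+1}$-part back into $Z^{k}$. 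This produces exactly the displayed quotient, closing the chain of isomorphisms. The main obstacle is the bookkeeping: because $F$ filters the first index while $\pi_{q}$ truncates the second, one must carefully track how cocycles and coboundaries interact with both gradings simultaneously, and the inclusion $B^{k}\subseteq Z^{k}$ must be invoked at each kernel computation to keep the intermediate quotients coherent.
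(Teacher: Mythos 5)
Your proof is correct, and it rests on the same key identity as the paper's: for any subspace $S\subseteq\mathcal{C}^{k}$, the homogeneous space $\pi_{q}(S)\cap\mathcal{C}^{p,q}$ consists exactly of the leading terms of elements of $S\cap F^{p}\mathcal{C}^{k}$. The paper states this as $\operatorname{pr}_{p,q}(S\cap F^{p}\mathcal{C}^{k})=\pi_{q}(S)\cap\mathcal{C}^{p,q}$, which is your identity $\pi_{q}(S)\cap\mathcal{C}^{p,q}=\pi_{q}(S\cap F^{p}\mathcal{C}^{k})$, since $\pi_{q}$ and $\operatorname{pr}_{p,q}$ agree on $F^{p}\mathcal{C}^{k}$. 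Where you genuinely diverge is the finishing step. The paper substitutes this identity into the numerator and denominator of $E^{p,q}_{\infty}$, writing each as an internal direct sum with the common summand $F^{p+1}\mathcal{C}^{k}$, and cancels that summand --- a two-line conclusion with no kernel computations. You instead exhibit both sides of the claimed isomorphism as quotients of the single module $Z^{k}(\mathcal{C},\D)\cap F^{p}\mathcal{C}^{k}$ by the common submodule $\bigl(B^{k}(\mathcal{C},\D)\cap F^{p}\mathcal{C}^{k}\bigr)+\bigl(Z^{k}(\mathcal{C},\D)\cap F^{p+1}\mathcal{C}^{k}\bigr)$, at the cost of two kernel computations (each of which correctly invokes $B^{k}\subseteq Z^{k}$, as you flag). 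Your route is longer, but it buys something the paper leaves implicit: your intermediate quotient is precisely the classical presentation of the $E_{\infty}$-term of a filtered complex, so your argument simultaneously verifies that the paper's defining formula for $E^{p,q}_{\infty}$ agrees with the textbook one and identifies both with the leading-term quotient. Both arguments are purely module-theoretic, hence remain valid over an arbitrary ring, consistent with the paper's remark that this lemma does not require the field hypothesis.
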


\begin{proof}
Consider the projection $\operatorname{pr}_{p,q}:\mathcal{C}\rightarrow\mathcal{C}^{p,q}$ along the splitting \eqref{eq:bigrading}. Observe that, for each subspace $S\subset\mathcal{C}^{k}$, we have
\[
S\cap F^{p}\mathcal{C}^{k}+F^{p+1}\mathcal{C}^{k} = \operatorname{pr}_{p,q}(S\cap F^{p}\mathcal{C}^{k})\oplus F^{p+1}\mathcal{C}^{k}.
\]
On the other hand, it is straightforward to verify that every element of $\operatorname{pr}_{p,q}(S\cap F^{p}\mathcal{C}^{k})$ is of the form $y_{p,q}$, for some $y\in S$ with bigraded decomposition $y=\sum_{i\geq p}y_{i,k-i}$. Thus,
\[
\operatorname{pr}_{p,q}(S\cap F^{p}\mathcal{C}^{k})=\pi_{q}(S)\cap\mathcal{C}^{p,q}.
\]
Setting $S=Z^{k}(\mathcal{C},\D)$ and $S=B^{k}(\mathcal{C},\D)$, we get
\begin{align*}
E^{p,q}_{\infty} = \frac{Z^{k}(\mathcal{C},\D)\cap F^{p}\mathcal{C}^{k}+F^{p+1}\mathcal{C}^{k}}{B^{k}(\mathcal{C},\D)\cap F^{p}\mathcal{C}^{k}+F^{p+1}\mathcal{C}^{k}} &= \frac{\operatorname{pr}_{p,q}(Z^{k}(\mathcal{C},\D)\cap F^{p}\mathcal{C}^{k})\oplus F^{p+1}\mathcal{C}^{k}}{\operatorname{pr}_{p,q}(B^{k}(\mathcal{C},\D)\cap F^{p}\mathcal{C}^{k})\oplus F^{p+1}\mathcal{C}^{k}}\\
&= \frac{(\pi_{q}(Z^{k}(\mathcal{C},\D))\cap\mathcal{C}^{p,q})\oplus F^{p+1}\mathcal{C}^{k}}{(\pi_{q}(B^{k}(\mathcal{C},\D))\cap\mathcal{C}^{p,q})\oplus F^{p+1}\mathcal{C}^{k}} \cong \frac{\pi_{q}(Z^{k}(\mathcal{C},\D))\cap\mathcal{C}^{p,q}}{\pi_{q}(B^{k}(\mathcal{C},\D))\cap\mathcal{C}^{p,q}}.
\end{align*}
\end{proof}

\paragraph{Splittings for cocycles and coboundaries.} We now derive similar splittings for the spaces of $k$-cocycles and $k$-coboundaries. Observe that for each subspace $S\subset\mathcal{C}^{k}$, we have a family of short exact sequences, given by
\begin{align}\label{eq:ExactS}
0\rightarrow \pi_{q}(S)\cap\mathcal{C}^{p,q} \hookrightarrow \pi_{q}(S) \overset{\pi_{q+1}}{\rightarrow} \pi_{q+1}(S) \rightarrow0, \qquad0
\leq q\leq k-1,
\end{align}
where $p:=k-q$. In the case of cocycles and coboundaries, we get the following result.

\begin{proposition}\label{prop:Diagram}
  For each $p,q\in\mathbb{Z}$, with $p+q=k$, we have the following commutative diagram with exact rows and columns which all describe the spaces of $k$-coboundaries $B^{k}(\mathcal{C},\D)$, $k$-cocycles $Z^{k}(\mathcal{C},\D)$, and $k$-cohomology $H^{k}(\mathcal{C},\D)$:
  \[
    \xymatrix@=1.4em{
    &0\ar[d]&0\ar[d]&0\ar[d]\\
    0\ar[r]&\pi_{q}(B^{k}(\mathcal{C},\D))\cap\mathcal{C}^{p,q}\ar@{^{(}->}[d]\ar@{^{(}->}[r]&\pi_{q}(B^{k}(\mathcal{C},\D))\ar@{^{(}->}[d]\ar@{->>}[r]^{\pi_{q+1}}&\pi_{q+1}(B^{k}(\mathcal{C},\D))\ar@{^{(}->}[d]\ar[r]&0\\
    0\ar[r]&\pi_{q}(Z^{k}(\mathcal{C},\D))\cap\mathcal{C}^{p,q}\ar@{->>}[d]\ar@{^{(}->}[r]&\pi_{q}(Z^{k}(\mathcal{C},\D))\ar@{->>}[d]\ar@{->>}[r]^{\pi_{q+1}}&\pi_{q+1}(Z^{k}(\mathcal{C},\D))\ar@{->>}[d]\ar[r]&0.\\
    0\ar[r]&\frac{\pi_{q}(Z^{k}(\mathcal{C},\D))\cap\mathcal{C}^{p,q}}{\pi_{q}(B^{k}(\mathcal{C},\D))\cap\mathcal{C}^{p,q}}
    \ar[d]\ar@{>->}[r]&\frac{\pi_{q}(Z^{k}(\mathcal{C},\D))}{\pi_{q}(B^{k}(\mathcal{C},\D))}\ar[d]\ar@{->>}[r]&\frac{\pi_{q+1}(Z^{k}(\mathcal{C},\D))}{\pi_{q+1}(B^{k}(\mathcal{C},\D))}\ar[r]\ar[d]&0\\
    &0&0&0}
  \]
\end{proposition}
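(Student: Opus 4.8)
The plan is to realize the displayed array as an instance of the nine lemma (the $3\times 3$ lemma) of homological algebra, by exhibiting its two upper rows and its three columns as short exact sequences and then checking that every inner square commutes. Once that is in place, exactness of the bottom row comes for free.

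First I would produce the two upper rows. These are nothing but the short exact sequence \eqref{eq:ExactS} specialized to the two subspaces $S=B^{k}(\mathcal{C},\D)$ and $S=Z^{k}(\mathcal{C},\D)$ of $\mathcal{C}^{k}$, with $p:=k-q$; for $0\leq q\leq k-1$ this is immediate, and the boundary cases give degenerate but still exact sequences. Thus in each row the horizontal arrows are the inclusion of the $\mathcal{C}^{p,q}$-component followed by the surjection $\pi_{q+1}$, and exactness of both rows is already established.

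Next I would build the vertical arrows. Since $\D$ is a coboundary operator we have $B^{k}(\mathcal{C},\D)\subseteq Z^{k}(\mathcal{C},\D)$, and applying the linear maps $\pi_{q}$ and $\pi_{q+1}$ (and, in the first column, intersecting with $\mathcal{C}^{p,q}$) preserves this inclusion, which yields the injective vertical maps from the top row into the middle row. The vertical maps from the middle row to the bottom row are the canonical quotient projections. Consequently each of the three columns is the tautological short exact sequence $0\to A\to B\to B/A\to 0$ attached to one of the inclusions $\pi_{q}(B^{k})\cap\mathcal{C}^{p,q}\subseteq\pi_{q}(Z^{k})\cap\mathcal{C}^{p,q}$, $\pi_{q}(B^{k})\subseteq\pi_{q}(Z^{k})$, and $\pi_{q+1}(B^{k})\subseteq\pi_{q+1}(Z^{k})$, and is therefore exact. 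By Lemma \ref{lemma:step1} the lower-left term of the array is exactly $E^{p,q}_{\infty}$, which ties the diagram to the splitting \eqref{eq:SplitHk1}.

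It then remains to check that the two bottom horizontal arrows are well defined and that all four inner squares commute. Well-definedness of the induced maps on the quotients is immediate: the inclusion $\pi_{q}(Z^{k})\cap\mathcal{C}^{p,q}\hookrightarrow\pi_{q}(Z^{k})$ carries $\pi_{q}(B^{k})\cap\mathcal{C}^{p,q}$ into $\pi_{q}(B^{k})$, and $\pi_{q+1}$ carries $\pi_{q}(B^{k})$ onto $\pi_{q+1}(B^{k})$, so both maps descend. Commutativity of the two upper squares holds because the horizontal maps are the very same inclusion-and-$\pi_{q+1}$ maps in both rows while the vertical maps are inclusions, and $\pi_{q+1}$ commutes with the inclusion $B^{k}\subseteq Z^{k}$; commutativity of the two lower squares is then forced by the universal property of the quotient. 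With exact columns and exact top two rows in hand, the nine lemma yields exactness of the bottom row, completing the diagram. The only genuine content beyond this bookkeeping is the last invocation; if one prefers not to cite the nine lemma, the same conclusion follows from a short diagonal diagram chase (equivalently, an application of the snake lemma to the top two rows), which I expect to be the main --- though entirely routine --- obstacle.
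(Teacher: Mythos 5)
Your proof is correct and follows essentially the same route as the paper's: both obtain the two upper rows from \eqref{eq:ExactS} with $S=B^{k}(\mathcal{C},\D)$ and $S=Z^{k}(\mathcal{C},\D)$, observe that the columns are the tautological quotient sequences and that the upper squares commute because the maps are inclusions and restrictions of $\pi_{q+1}$, and then deduce exactness of the bottom row from the rest of the diagram (which you name explicitly as the nine lemma, while the paper leaves that final step as an unnamed diagram chase).
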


Here, the mappings from the second to the third row are the canonical projections, and the maps $\frac{\pi_{q}(Z^{k}(\mathcal{C},\D))\cap\mathcal{C}^{p,q}}{\pi_{q}(B^{k}(\mathcal{C},\D))\cap\mathcal{C}^{p,q}} \rightarrow\frac{\pi_{q}(Z^{k}(\mathcal{C},\D))}{\pi_{q}(B^{k}(\mathcal{C},\D))}$ and $\frac{\pi_{q}(Z^{k}(\mathcal{C},\D))}{\pi_{q}(B^{k}(\mathcal{C},\D))}\rightarrow\frac{\pi_{q+1}(Z^{k}(\mathcal{C},\D))}{\pi_{q+1}(B^{k}(\mathcal{C},\D))}$ are defined in such a way that the lower $2\times2$ blocks commute.

\begin{proof}[Proof of Proposition \ref{prop:Diagram}]
  The upper $2\times2$ diagrams are clearly commutative because the arrows $\hookrightarrow$ are natural inclusions, and the arrows with a $\pi_{q+1}$ are the restriction of the same mapping. On the other hand, the exactness of the first row is obtained from \eqref{eq:ExactS} by setting $S:=B^{k}(\mathcal{C},\D)$. Similarly, the exactness of the second row follows from setting $S:=Z^{k}(\mathcal{C},\D)$ in \eqref{eq:ExactS}. Moreover, each column is exact by definition. Finally, the exactness of the last row follows from the commutativity and the exactness of the rest of the diagram.
\end{proof}

\begin{corollary}\label{cor:Splits}
  The coboundary, cocycle, and cohomology spaces of degree $k$ admit the following splittings:
  \begin{align*}
    B^{k}(\mathcal{C},\D)\cong\bigoplus_{p+q=k}\pi_{q}(B^{k}(\mathcal{C},\D))\cap\mathcal{C}^{p,q}, \qquad& Z^{k}(\mathcal{C},\D)\cong\bigoplus_{p+q=k}\pi_{q}(Z^{k}(\mathcal{C},\D))\cap\mathcal{C}^{p,q},
     && \text{and} && \\ H^{k}(\mathcal{C},\D)\cong\bigoplus_{p+q=k}&\tfrac{\pi_{q}(Z^{k}(\mathcal{C},\D))\cap\mathcal{C}^{p,q}}{\pi_{q}(B^{k}(\mathcal{C},\D))\cap\mathcal{C}^{p,q}}.
  \end{align*}
\end{corollary}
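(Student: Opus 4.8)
The plan is to derive all three splittings from the short exact sequences already in hand, exploiting the fact that every short exact sequence of $\mathbb{R}$-vector spaces splits. First I would record two boundary properties of the truncation projections $\pi_q$ acting on a subspace $S\subset\mathcal{C}^k$: since $\pi_q=\operatorname{Id}_{\mathcal{C}}$ for $q\le 0$, we have $\pi_0(S)=S$; and since any element of $\mathcal{C}^k$ has nonzero bigraded components only in bidegrees $(p,q)$ with $p+q=k$ and $q\le k$, the image $\pi_k(S)$ lies inside $\mathcal{C}^{0,k}$, while $\pi_q(S)=0$ for $q\ge k+1$. These two facts anchor and terminate the recursion.

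For the cocycle and coboundary splittings I would take $S:=Z^k(\mathcal{C},\D)$ and $S:=B^k(\mathcal{C},\D)$ in turn and feed them into the short exact sequence \eqref{eq:ExactS}. Being a sequence of real vector spaces, each splits, so $\pi_q(S)\cong(\pi_q(S)\cap\mathcal{C}^{p,q})\oplus\pi_{q+1}(S)$ for $0\le q\le k-1$ with $p=k-q$. Starting from $\pi_0(S)=S$ and iterating over $q=0,1,\dots,k-1$, then closing the recursion with $\pi_k(S)=\pi_k(S)\cap\mathcal{C}^{0,k}$, yields
\begin{gather*}
S\;\cong\;\bigoplus_{p+q=k}\pi_q(S)\cap\mathcal{C}^{p,q},
\end{gather*}
which is precisely the first two asserted isomorphisms.

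For the cohomology splitting the most economical route is to invoke what has already been proved: combining the convergence isomorphism \eqref{eq:SplitHk1}, $H^k(\mathcal{C},\D)\cong\bigoplus_{p+q=k}E^{p,q}_{\infty}$, with the identification of each graded piece in Lemma \ref{lemma:step1} gives the third isomorphism at once. Alternatively, and in parallel with the previous paragraph, I would iterate the bottom row of the diagram of Proposition \ref{prop:Diagram}, namely the short exact sequence (writing $Z^k,B^k$ for $Z^k(\mathcal{C},\D),B^k(\mathcal{C},\D)$)
\begin{gather*}
0\to\tfrac{\pi_q(Z^k)\cap\mathcal{C}^{p,q}}{\pi_q(B^k)\cap\mathcal{C}^{p,q}}\to\tfrac{\pi_q(Z^k)}{\pi_q(B^k)}\to\tfrac{\pi_{q+1}(Z^k)}{\pi_{q+1}(B^k)}\to0,
\end{gather*}
splitting it and descending from $\pi_0(Z^k)/\pi_0(B^k)=H^k(\mathcal{C},\D)$ down to the top piece in bidegree $(0,k)$.

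Because every sequence in play is one of $\mathbb{R}$-vector spaces, there is no genuine obstruction; the work is pure bookkeeping, and the only delicate points are the two boundary cases $q=0$ and $q=k$ of the recursion, where one must invoke $\pi_0=\operatorname{Id}_{\mathcal{C}}$ and $\pi_k(S)\subseteq\mathcal{C}^{0,k}$ respectively. I would also note that the splittings are in fact internal rather than merely abstract: the bigraded projection $\operatorname{pr}_{p,q}$ from the proof of Lemma \ref{lemma:step1} restricts on $\pi_q(S)$ to a retraction onto $\pi_q(S)\cap\mathcal{C}^{p,q}$, which splits \eqref{eq:ExactS}, so the direct summands sit honestly inside $S$ and inside $H^k(\mathcal{C},\D)$.
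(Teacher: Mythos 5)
Your core argument is correct and is essentially the paper's own proof: the corollary follows by iterating the short exact sequences \eqref{eq:ExactS} (equivalently, the rows of the diagram in Proposition \ref{prop:Diagram}), using that every short exact sequence of $\mathbb{R}$-vector spaces splits, with the boundary cases $\pi_{0}=\operatorname{Id}_{\mathcal{C}}$ and $\pi_{k}(S)\subseteq\mathcal{C}^{0,k}$ anchoring the recursion; and your identification of the $H^{k}$ splitting via \eqref{eq:SplitHk1} together with Lemma \ref{lemma:step1} is exactly the remark the paper makes right after the corollary.

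However, your closing claim --- that $\operatorname{pr}_{p,q}$ restricts on $\pi_{q}(S)$ to a retraction onto $\pi_{q}(S)\cap\mathcal{C}^{p,q}$, so that the splittings are ``internal rather than merely abstract'' --- is false, and you should delete it. Concretely, take $\mathcal{C}^{1,0}=\mathbb{R}e$, $\mathcal{C}^{0,1}=\mathbb{R}f$, $\mathcal{C}^{2,0}=\mathbb{R}g$, all other $\mathcal{C}^{p,q}=\{0\}$, with $\D_{0,1}=0$, $\D_{1,0}e=g$, and $\D_{2,-1}f=-g$; the relations \eqref{eq:Cob1}--\eqref{eq:Cob5} hold trivially, so this is a bigraded complex satisfying all the standing hypotheses. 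Then $S:=Z^{1}(\mathcal{C},\D)=\mathbb{R}(e+f)$, so $\pi_{0}(S)\cap\mathcal{C}^{1,0}=S\cap\mathcal{C}^{1,0}=\{0\}$, yet $\operatorname{pr}_{1,0}(e+f)=e\neq 0$: the projection $\operatorname{pr}_{1,0}$ does not even map $\pi_{0}(S)$ into $\pi_{0}(S)\cap\mathcal{C}^{1,0}$, hence is not a retraction onto it. Likewise the summand $\pi_{1}(S)\cap\mathcal{C}^{0,1}=\mathbb{R}f$ does not sit inside $S$ at all. This non-canonicity is precisely the point of the Remark following the corollary in the paper, which stresses that the splittings of Corollary \ref{cor:Splits} hold only in the vector-space category. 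Since your splitting of \eqref{eq:ExactS} is already justified by the vector-space argument alone, removing the last paragraph leaves a complete and correct proof.
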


Note that the splitting for $H^{k}(\mathcal{C},\D)$ in Corollary \ref{cor:Splits} coincides with \eqref{eq:SplitHk1} under Lemma \ref{lemma:step1}

\begin{remark}
  Every result of this part is valid if the bigraded decomposition of $\D$ has the more general form $\D=\sum_{r\geq0}\D_{r,1-r}$. Moreover, Lemma \ref{lemma:step1} and Proposition \ref{prop:Diagram} still hold if $(\mathcal{C}^{\bullet},\D)$ is a cochain complex over a ring, while the non-canonical splittings in \eqref{eq:SplitHk1} and in Corollary \ref{cor:Splits} only hold in the vector spaces category.
\end{remark}

Otherwise stated, we assume in what follows that $(\mathcal{C}^{\bullet},\D)$ is a cochain complex over a ring $\mathcal{R}$. In the cases when we require that the ring of scalars is a field, this condition will be explicitly indicated.

\section{Describing the cohomology}\label{sec:BigradedDescrip}

In this Section, we introduce some useful objects which allow us to improve our description of the splittings in Corollary \ref{cor:Splits} and the diagram of Proposition \ref{prop:Diagram}.

\paragraph{The null subcomplexes.} For simplicity, for each $p,q,k\in\mathbb{Z}$ we denote
\[
\ker^{k}(\D_{i,j}):=\ker(\D_{i,j}:\mathcal{C}^{k}\rightarrow\mathcal{C}^{k+1}), && \text{and} && \ker^{p,q}(\D_{i,j}):=\ker(\D_{i,j}:\mathcal{C}^{p,q}\rightarrow\mathcal{C}^{p+i,q+j}),
\]
for all $(i,j)\in\{(0,1),(1,0),(2,-1)\}$. Let us denote
\[
\mathcal{N}:=\ker(\D_{0,1}:\mathcal{C}\rightarrow\mathcal{C})\cap\ker(\D_{2,-1}:\mathcal{C}\rightarrow\mathcal{C}),
\]
$\mathcal{N}^{k}:=\ker^{k}(\D_{0,1})\cap\ker^{k}(\D_{2,-1})$, $\mathcal{N}^{p,q}:=\ker^{p,q}(\D_{0,1})\cap\ker^{p,q}(\D_{2,-1})$, and $\mathcal{N}_{q}:=\bigoplus_{p\in\mathbb{Z}}\mathcal{N}^{p-q,q}$. Since $\D_{0,1}$ and $\D_{2,-1}$ are bigraded operators, we have $\mathcal{N}=\bigoplus_{k\in\mathbb{Z}}\mathcal{N}^{k}$ and $\mathcal{N}^{k}=\bigoplus_{p+q=k}\mathcal{N}^{p,q}$. Moreover,

\begin{lemma}\label{lemma:SubCpx}
  The graded $\mathcal{R}$-module $\mathcal{N}$ is a cochain subcomplex of $(\mathcal{C},\D)$. Moreover, for each $q\in\mathbb{Z}$, $\mathcal{N}_{q}$ is also a cochain subcomplex of $(\mathcal{C},\D)$.
\end{lemma}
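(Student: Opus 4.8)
The plan is to verify in each case that the relevant graded submodule is invariant under $\D$; the coboundary condition $\D^2=0$ is then inherited automatically from the ambient complex $(\mathcal{C},\D)$, so closure under $\D$ is the only content to check. Both $\mathcal{N}$ and $\mathcal{N}_q$ are graded $\mathcal{R}$-submodules of $\mathcal{C}$ by construction, being direct sums of the bigraded pieces $\mathcal{N}^{p,q}$.

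The crucial observation is that $\D$ restricts on $\mathcal{N}$ to the single component $\D_{1,0}$: if $x\in\mathcal{N}$ then $\D_{0,1}x=0$ and $\D_{2,-1}x=0$ by definition, so the bigraded decomposition \eqref{eq:DecomPart} collapses to $\D x=\D_{1,0}x$. It remains to see that $\D_{1,0}$ sends $\mathcal{N}$ into itself. For $x\in\mathcal{N}$, the cochain relation \eqref{eq:Cob4} yields $\D_{0,1}\D_{1,0}x=-\D_{1,0}\D_{0,1}x=0$, while \eqref{eq:Cob2} yields $\D_{2,-1}\D_{1,0}x=-\D_{1,0}\D_{2,-1}x=0$. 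Hence $\D_{1,0}x\in\ker(\D_{0,1})\cap\ker(\D_{2,-1})=\mathcal{N}$, and therefore $\D(\mathcal{N})=\D_{1,0}(\mathcal{N})\subseteq\mathcal{N}$, proving that $\mathcal{N}$ is a cochain subcomplex.

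For the rows $\mathcal{N}_q$ I would exploit that $\D_{1,0}$ preserves the second bigrading index. Given a homogeneous element $x\in\mathcal{N}^{p,q}\subseteq\mathcal{N}_q$, the previous paragraph gives $\D x=\D_{1,0}x\in\mathcal{N}$, and since $\D_{1,0}(\mathcal{C}^{p,q})\subseteq\mathcal{C}^{p+1,q}$ this element lies in $\mathcal{N}^{p+1,q}\subseteq\mathcal{N}_q$. Extending by linearity over the direct sum defining $\mathcal{N}_q$ gives $\D(\mathcal{N}_q)\subseteq\mathcal{N}_q$, as required. No step presents a genuine obstacle; the only point deserving attention is the reduction $\D|_{\mathcal{N}}=\D_{1,0}$, which is exactly the operator respecting the row decomposition $\mathcal{N}=\bigoplus_{q}\mathcal{N}_q$ and thereby making each row $\D$-invariant.
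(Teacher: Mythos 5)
Your proof is correct and follows essentially the same route as the paper's: both reduce the problem to showing $\D|_{\mathcal{N}}=\D_{1,0}$ and then use equations \eqref{eq:Cob2} and \eqref{eq:Cob4} to conclude $\D_{1,0}(\mathcal{N})\subseteq\mathcal{N}$, with the bidegree $(1,0)$ of $\D_{1,0}$ giving the invariance of each row $\mathcal{N}_{q}$. The only cosmetic difference is the order of presentation (the paper reduces at once to the rows $\mathcal{N}_{q}$ via $\mathcal{N}=\bigoplus_{q}\mathcal{N}_{q}$, while you treat $\mathcal{N}$ first and then restrict), which does not affect the substance.
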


\begin{proof}
  Since $\mathcal{N}=\bigoplus_{q\in\mathbb{Z}}\mathcal{N}_{q}$, it suffices to show that each $\mathcal{N}_{q}$ is a cochain subcomplex of $(\mathcal{C},\D)$. By definition, $\D_{0,1}$ and $\D_{2,-1}$ vanish on $\mathcal{N}_{q}$. Thus,
  \[
  \D(\mathcal{N}^{p-q,q})=\D_{1,0}(\mathcal{N}^{p-q,q})\subseteq\D_{1,0}(\mathcal{C}^{p-q,q})\subseteq\mathcal{C}^{(p+1)-q,q}.
  \]
  To complete the proof, we just need to verify that $\D_{1,0}(\mathcal{N})\subseteq\mathcal{N}$. Fix $\eta\in\mathcal{N}$. Then, $\D_{2,-1}\eta=0$ and $\D_{0,1}\eta=0$. By applying equations \eqref{eq:Cob2} and \eqref{eq:Cob4},
  \[
    \D_{2,-1}(\D_{1,0}\eta) = -\D_{1,0}\D_{2,-1}\eta = 0,  && \text{and} && \D_{0,1}(\D_{1,0}\eta) = -\D_{1,0}\D_{0,1}\eta = 0,
  \]
  proving that $\D_{1,0}\eta\in\mathcal{N}$. Thus, $\D(\mathcal{N}_{q})\subseteq\mathcal{N}_{q}$, as claimed.
\end{proof}

We denote by $\overline{\D}:=\D|_{\mathcal{N}}$ the coboundary operator on $\mathcal{N}$. We use the same notation for any of the cochain subcomplexes $\mathcal{N}_{q}$. The cochain complexes $(\mathcal{N},\overline{\D})$ and $(\mathcal{N}_{q},\overline{\D})$ are called \emph{the null subcomplexes} of $(\mathcal{C},\D)$. Finally, recall that $\mathcal{C}^{p,q}=\{0\}$ whenever $p$ or $q$ is negative. In particular, $\mathcal{C}^{\bullet,0}\subseteq\ker(\D_{2,-1})$. Therefore, $\mathcal{N}_{0}=\ker(\D_{0,1}:\mathcal{C}^{\bullet,0}\rightarrow\mathcal{C}^{\bullet,1})$. In other words, for $q=0$, the restriction of $\D_{1,0}$ to the $\D_{0,1}$-cocycles of bidegree $(p,0)$ gives the null subcomplex $\mathcal{N}_{0}$.

\paragraph{Pre-coboundaries and pre-cocycles.} Recall that the terms appearing in the upper row of the diagrams of Proposition \ref{prop:Diagram} are of the form $\pi_{q}(B^{k}(\mathcal{C},\D))$ or $\pi_{q}(B^{k}(\mathcal{C},\D))\cap\mathcal{C}^{p,q}$, whose elements are obtained by projecting a $k$-coboundary under $\pi_{q}:\mathcal{C}\rightarrow G^{q}\mathcal{C}$. We call the elements of $\pi_{q}(B^{\bullet}(\mathcal{C},\D))$ \emph{pre-coboundaries}, and the elements of $\pi_{q}(B^{\bullet}(\mathcal{C},\D))\cap\mathcal{C}^{p,q}$, \emph{homogeneous pre-coboundaries}. In a similar fashion, we call the elements of $\pi_{q}(Z^{\bullet}(\mathcal{C},\D))$ \emph{pre-cocycles}, and the elements of $\pi_{q}(Z^{\bullet}(\mathcal{C},\D))\cap\mathcal{C}^{p,q}$, \emph{homogeneous pre-cocycles}.

In this part, we give a detailed description of the $\mathcal{R}$-module of pre-cocycles. In fact, although a pre-cocycle is defined as the projection of a cocycle, we describe a bigger $\mathcal{R}$-module containing the cocycles such that the projection of its elements is again a pre-cocycle. In this sense, we have found some degrees of freedom in the construction of pre-cocycles.

Observe that $\eta\in\mathcal{C}^{1}$ is a 1-cocycle if and only if
\[
\D_{2,-1}\eta_{0,1} + \D_{1,0}\eta_{1,0} = 0, && \D_{1,0}\eta_{0,1} + \D_{0,1}\eta_{1,0} = 0, &&
\D_{0,1}\eta_{0,1} = 0.
\]
The left-hand sides of each equation correspond to the bigraded components of $\D\eta$. Similarly, $\eta\in\mathcal{C}^{2}$ is a 2-cocycle if and only if
\[
\D_{2,-1}\eta_{1,1} + \D_{1,0}\eta_{2,0} = 0, && \D_{2,-1}\eta_{0,2} + \D_{1,0}\eta_{1,1} + \D_{0,1}\eta_{2,0} = 0,\\
\D_{1,0}\eta_{0,2} + \D_{0,1}\eta_{1,1} = 0, &&  \D_{0,1}\eta_{0,2} = 0.
\]
In general, for each $\eta\in\mathcal{C}^{k}$ with bigraded components $\eta_{p,q}\in\mathcal{C}^{p,q}$ ($p+q=k$), the bigraded components of $\D\eta$ are
\[
(\D\eta)_{i,j} = \D_{0,1}\eta_{i,j-1}+\D_{1,0}\eta_{i-1,j}+\D_{2,-1}\eta_{i-2,j+1}, && i+j=k+1.
\]
Let us consider the graded $\mathcal{R}$-modules
\begin{align}\label{eq:BigrM}
\mathcal{M} := \{\eta\in\mathcal{C} \mid \D\eta\in B(\mathcal{N},\overline{\D})\},
\end{align}
and
\[
\mathcal{M}^{k} := \{\eta\in\mathcal{C}^{k} \mid (\D\eta)_{i,j}\in B^{k+1}(\mathcal{N}_{j},\overline{\D}),~ i+j=k+1\}.
\]
Then, $\mathcal{M}^{\bullet}=\bigoplus_{k\in\mathbb{Z}}\mathcal{M}^{k}$. Moreover, it is clear that $Z(\mathcal{C},\D)\subseteq\mathcal{M}$. Now, for each $q\in\mathbb{Z}$, define
\[
\mathcal{Z}_{q} := \{\pi_{q}(\eta) \mid \eta\in\mathcal{M}, \text{ and } \pi_{q}(\D\eta)=0\},  && \text{and} && \mathcal{Z}^{k}_{q} := \{\pi_{q}(\eta) \mid \eta\in\mathcal{M}^{k}, \text{ and } \pi_{q}(\D\eta)=0\}.
\]
In other words, the elements of $\mathcal{Z}_{q}\subseteq G^{q}\mathcal{C}$ are of the form $\pi_{q}(\eta)$, for some $\eta\in\mathcal{C}$ satisfying
\[
\D_{0,1}\eta_{i,j-1}+\D_{1,0}\eta_{i-1,j}+\D_{2,-1}\eta_{i-2,j+1}\in
\left\{\begin{aligned}
  \{0\} && \text{if} && j\geq q,\\
  B(\mathcal{N}_{j},\overline{\D})  && \text{if} && j<q.
\end{aligned}\right.
\]
Note that $\mathcal{Z}^{\bullet}_{q}=\bigoplus_{k\in\mathbb{Z}}\mathcal{Z}^{k}_{q}$. Furthermore, we claim that $\mathcal{Z}_{q}$ is precisely the $\mathcal{R}$-module of pre-cocycles in $G^{q}\mathcal{C}$.

\begin{proposition}\label{prop:Precocyc}
  For each $q\in\mathbb{Z}$, we have $\pi_{q}(Z(\mathcal{C},\D)) = \mathcal{Z}_{q}$. In particular, $\xi\in\mathcal{C}^{p,q}$ is a pre-cocycle if and only if $\D_{0,1}\xi=0$ and there exist $\eta\in F^{p}\mathcal{M}^{p+q}$ such that $\eta_{p,q}=\xi$, and $\D_{1,0}\eta_{p,q}+\D_{0,1}\eta_{p+1,q-1}=0$.
\end{proposition}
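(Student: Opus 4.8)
The plan is to establish the equality $\pi_{q}(Z(\mathcal{C},\D)) = \mathcal{Z}_{q}$ by proving the two inclusions separately, and then to obtain the ``in particular'' characterization simply as the homogeneous case $\xi\in\mathcal{C}^{p,q}$. The inclusion $\pi_{q}(Z(\mathcal{C},\D))\subseteq\mathcal{Z}_{q}$ is immediate: if $\eta\in Z(\mathcal{C},\D)$ then $\D\eta=0\in B(\mathcal{N},\overline{\D})$, so $\eta\in\mathcal{M}$ and $\pi_{q}(\D\eta)=0$, whence $\pi_{q}(\eta)\in\mathcal{Z}_{q}$ directly from the definition of $\mathcal{Z}_{q}$.

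For the reverse inclusion I would start from an arbitrary $\zeta=\pi_{q}(\eta)\in\mathcal{Z}_{q}$, with $\eta\in\mathcal{M}$ of total degree $k$ and $\pi_{q}(\D\eta)=0$, and correct $\eta$ into a genuine cocycle without altering its image under $\pi_{q}$. The hypothesis $\pi_{q}(\D\eta)=0$ says that the bigraded components $(\D\eta)_{i,j}$ vanish for $j\geq q$, while $\eta\in\mathcal{M}$ says that for each $j<q$ the component $(\D\eta)_{k+1-j,j}$ lies in $B^{k+1}(\mathcal{N}_{j},\overline{\D})=\D_{1,0}(\mathcal{N}_{j})$, using that $\overline{\D}$ restricts to $\D_{1,0}$ on $\mathcal{N}$ by Lemma \ref{lemma:SubCpx}. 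Hence for every $j$ with $0\leq j<q$ I may choose $\beta_{j}\in\mathcal{N}^{k-j,j}$ with $\D_{1,0}\beta_{j}=(\D\eta)_{k+1-j,j}$, and set $\eta':=\eta-\sum_{0\leq j<q}\beta_{j}$. The crux of the argument is that since $\beta_{j}\in\mathcal{N}$ we have $\D_{0,1}\beta_{j}=\D_{2,-1}\beta_{j}=0$, so $\D\beta_{j}=\D_{1,0}\beta_{j}$ is concentrated in second grading $j$; subtracting $\beta_{j}$ therefore modifies only the single component of $\D\eta$ in second grading $j$ and leaves all others untouched. Consequently $(\D\eta')_{i,j}=0$ for every $j$ (already zero for $j\geq q$, cancelled by $\D_{1,0}\beta_{j}$ for $j<q$), so $\eta'\in Z(\mathcal{C},\D)$, and since each $\beta_{j}$ has second grading $j<q$ we get $\pi_{q}(\eta')=\pi_{q}(\eta)=\zeta$. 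This proves $\zeta\in\pi_{q}(Z(\mathcal{C},\D))$.

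For the ``in particular'' statement I would specialize the equality just established to $\xi\in\mathcal{C}^{p,q}$, so that $\xi$ is a homogeneous pre-cocycle iff $\xi\in\mathcal{Z}_{q}\cap\mathcal{C}^{p,q}$, i.e.\ $\xi=\pi_{q}(\eta)$ for some $\eta\in\mathcal{M}^{k}$ with $\pi_{q}(\D\eta)=0$, where $k=p+q$. The requirement $\pi_{q}(\eta)=\xi\in\mathcal{C}^{p,q}$ forces $\eta_{i,j}=0$ for all $j>q$ and $\eta_{p,q}=\xi$, that is $\eta\in F^{p}\mathcal{C}^{k}$, hence $\eta\in F^{p}\mathcal{M}^{k}$. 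It then remains to expand $\pi_{q}(\D\eta)=0$ componentwise: the components with $j>q+1$ vanish automatically, since all three summands of $(\D\eta)_{i,j}$ involve $\eta$-components of second grading $\geq q+1$; the component in second grading $q+1$ reduces to $\D_{0,1}\eta_{p,q}=\D_{0,1}\xi$, yielding $\D_{0,1}\xi=0$; and the component in second grading $q$ reduces to $\D_{1,0}\eta_{p,q}+\D_{0,1}\eta_{p+1,q-1}=\D_{1,0}\xi+\D_{0,1}\eta_{p+1,q-1}$, yielding the remaining equation. The conditions for $j<q$ are exactly the membership $\eta\in\mathcal{M}^{k}$, so this reproduces the stated characterization.

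I expect the main obstacle to be the bookkeeping in the reverse inclusion, namely ensuring that the corrections $\beta_{j}$ neither disturb the already-vanishing components in second grading $\geq q$ nor spoil the $\mathcal{M}$-membership encoded in the lower ones. This is resolved cleanly by the single observation that $\D$ restricts to $\D_{1,0}$ on $\mathcal{N}$, so each correction $\beta_{j}$ acts only within its own second-grading level; everything else is a routine degree count.
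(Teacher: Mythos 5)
Your proof is correct and follows essentially the same route as the paper: the forward inclusion from $Z(\mathcal{C},\D)\subseteq\mathcal{M}$, and the reverse inclusion by choosing $\beta_{j}\in\mathcal{N}^{k-j,j}$ with $\D_{1,0}\beta_{j}=(\D\eta)_{k+1-j,j}$ for $j<q$ and correcting $\eta$ to the genuine cocycle $\eta-\sum_{j<q}\beta_{j}$, which leaves $\pi_{q}(\eta)$ unchanged precisely because $\D$ restricts to $\D_{1,0}$ on $\mathcal{N}$. Your explicit componentwise verification of the ``in particular'' statement is a detail the paper leaves implicit, but it matches the intended argument.
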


\begin{proof}
  The inclusion $\pi_{q}(Z(\mathcal{C},\D)) \subseteq \mathcal{Z}_{q}$ simply follows from the already mentioned fact $Z(\mathcal{C},\D)\subseteq\mathcal{M}$. Conversely, pick $\xi\in \mathcal{Z}_{q}^{k}$, of the form $\xi=\sum_{j\geq q}\xi_{j}$, where $\xi_{j}\in\mathcal{C}^{k-j,j}$. Then, there exists $\eta\in\mathcal{M}^{k}$ such that $\pi_{q}(\eta)=\xi$ and $\pi_{q}(\D\eta)=0$. Let $\eta_{j}\in\mathcal{C}^{k-j,j}$ be the bigraded components of $\eta$. The condition $\eta\in\mathcal{M}^{k}$ implies that for each $j<q$ there exists $\eta'_{j}\in\mathcal{N}^{k-j,j}$ such that
  \[
  \D_{0,1}\eta_{j-1}+\D_{1,0}\eta_{j}+\D_{2,-1}\eta_{j+1}=\D_{1,0}\eta'_{j},
  \]
  Finally, set $\widetilde{\xi}:=\eta - \sum_{j<q}\eta'_{j}$. Since $\D_{0,1}\eta'_{j}=0$ and $\D_{2,-1}\eta'_{j}=0$, it is straightforward to verify that $\D\widetilde{\xi}=0$. Furthermore, $\pi_{q}(\widetilde{\xi})=\xi$, which proves that $\xi\in\pi_{q}(Z^{k}(\mathcal{C},\D))$.
\end{proof}

For each $q,k\in\mathbb{Z}$ denote by $\mathcal{B}^{k}_{q} := \pi_{q}(B^{k}(\mathcal{C},\D))$ the $\mathcal{R}$-module of pre-coboundaries. As a consequence, of Propositions \ref{prop:Diagram} and \ref{prop:Precocyc}, we have:

\begin{theorem}\label{teo:Diagram}
  For each $p,q\in\mathbb{Z}$ with $p+q=k$, we have the following commutative diagrams with exact rows and columns describing the coboundary, cocycle, and cohomology of $(\mathcal{C}^{\bullet,\bullet},\D)$:
  \[
    \xymatrix@=1.4em{
    &0\ar[d]&0\ar[d]&0\ar[d]\\
    0\ar[r]&\mathcal{B}^{k}_{q}\cap\mathcal{C}^{p,q}\ar@{^{(}->}[d]\ar@{^{(}->}[r]&\mathcal{B}^{k}_{q}\ar@{^{(}->}[d]\ar@{->>}[r]^{\pi_{q+1}}&\mathcal{B}^{k}_{q+1}\ar@{^{(}->}[d]\ar[r]&0\phantom{.}\\
    0\ar[r]&\mathcal{Z}^{k}_{q}\cap\mathcal{C}^{p,q}\ar@{->>}[d]\ar@{^{(}->}[r]&\mathcal{Z}^{k}_{q}\ar@{->>}[d]\ar@{->>}[r]^{\pi_{q+1}}&\mathcal{Z}^{k}_{q+1}\ar@{->>}[d]\ar[r]&0.\\
    0\ar[r]&\frac{\mathcal{Z}^{k}_{q}\cap\mathcal{C}^{p,q}}{\mathcal{B}^{k}_{q}\cap\mathcal{C}^{p,q}}
    \ar[d]\ar@{>->}[r]&\frac{\mathcal{Z}^{k}_{q}}{\mathcal{B}^{k}_{q}}\ar[d]\ar@{->>}[r]^{\bar\pi_{q+1}}&\frac{\mathcal{Z}^{k}_{q+1}}{\mathcal{B}^{k}_{q+1}}\ar[d]\ar[r]&0\phantom{.}\\
    &0&0&0}
  \]
\end{theorem}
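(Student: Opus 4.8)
The plan is to observe that the diagram asserted in Theorem \ref{teo:Diagram} is nothing more than the diagram of Proposition \ref{prop:Diagram} rewritten after identifying each of its entries with the objects $\mathcal{B}^{k}_{q}$, $\mathcal{Z}^{k}_{q}$, and their homogeneous pieces. Once every entry is matched, the exactness of the rows and columns, the commutativity of the squares, and the construction of the induced maps in the bottom row are inherited verbatim from Proposition \ref{prop:Diagram}, so no new homological bookkeeping is needed. Accordingly, I would organize the argument as two identifications followed by a transcription.

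First I would settle the coboundary entries, which are immediate: by definition $\mathcal{B}^{k}_{q}=\pi_{q}(B^{k}(\mathcal{C},\D))$, and intersecting with $\mathcal{C}^{p,q}$ gives $\mathcal{B}^{k}_{q}\cap\mathcal{C}^{p,q}=\pi_{q}(B^{k}(\mathcal{C},\D))\cap\mathcal{C}^{p,q}$. Hence the top row of the present diagram coincides, entry by entry, with the top row of Proposition \ref{prop:Diagram}. Next I would treat the cocycle entries using Proposition \ref{prop:Precocyc}, which supplies the equality of graded $\mathcal{R}$-modules $\pi_{q}(Z(\mathcal{C},\D))=\mathcal{Z}_{q}$. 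Since $\pi_{q}$ only discards the bigraded components whose second index is below $q$, it preserves the total degree; therefore it restricts to each homogeneous piece and yields $\pi_{q}(Z^{k}(\mathcal{C},\D))=\mathcal{Z}^{k}_{q}$, and a further intersection with $\mathcal{C}^{p,q}$ gives the homogeneous identification $\pi_{q}(Z^{k}(\mathcal{C},\D))\cap\mathcal{C}^{p,q}=\mathcal{Z}^{k}_{q}\cap\mathcal{C}^{p,q}$. This matches the middle row of the present diagram with that of Proposition \ref{prop:Diagram}.

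Finally, the bottom-row quotients are the cokernels of the vertical maps already identified, and the horizontal map $\bar\pi_{q+1}$ between them is the one induced so that the lower $2\times2$ blocks commute, exactly as in Proposition \ref{prop:Diagram}; so with the two identifications in place the entire diagram of Theorem \ref{teo:Diagram} is literally the diagram of Proposition \ref{prop:Diagram}, and its exactness and commutativity transfer without change. This proof is essentially a substitution, so I do not expect a genuine obstacle; the one point I would verify most carefully is precisely the grading compatibility of $\pi_{q}$, ensuring that the graded equality of Proposition \ref{prop:Precocyc} descends to its degree-$k$ homogeneous component. Everything else is transcription.
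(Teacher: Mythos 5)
Your proposal is correct and follows essentially the same route as the paper, which presents Theorem \ref{teo:Diagram} precisely as ``a consequence of Propositions \ref{prop:Diagram} and \ref{prop:Precocyc}'': one substitutes the definition $\mathcal{B}^{k}_{q}=\pi_{q}(B^{k}(\mathcal{C},\D))$ and the identification $\pi_{q}(Z^{k}(\mathcal{C},\D))=\mathcal{Z}^{k}_{q}$ into the diagram of Proposition \ref{prop:Diagram}, so that exactness and commutativity transfer verbatim. The one point you flag for care, namely that the graded equality $\pi_{q}(Z(\mathcal{C},\D))=\mathcal{Z}_{q}$ descends to its degree-$k$ component, is handled correctly, since $\pi_{q}$ preserves total degree and both sides are graded submodules (indeed, the ``in particular'' clause of Proposition \ref{prop:Precocyc} already records the homogeneous statement).
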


\begin{corollary}\label{cor:Split}
  In the case when $\mathcal{R}$ is a field, we get the following splittings:
    \[
    B^{k}(\mathcal{C},\D)\cong\bigoplus_{p+q=k}\mathcal{B}^{k}_{q}\cap\mathcal{C}^{p,q}, && Z^{k}(\mathcal{C},\D)\cong\bigoplus_{p+q=k}\mathcal{Z}^{k}_{q}\cap\mathcal{C}^{p,q}, && H^{k}(\mathcal{C},\D)\cong\bigoplus_{p+q=k}\frac{\mathcal{Z}^{k}_{q}\cap\mathcal{C}^{p,q}}{\mathcal{B}^{k}_{q}\cap\mathcal{C}^{p,q}}.
    \]
\end{corollary}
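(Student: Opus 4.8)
The plan is to read the three isomorphisms straight off the commutative diagram of Theorem \ref{teo:Diagram}, invoking nothing beyond the elementary fact that over a field every short exact sequence of vector spaces splits. Fix $k$, and for each $0\le q\le k$ put $p:=k-q$. The three rows of that diagram are short exact sequences
\[
0 \rightarrow \mathcal{B}^{k}_{q} \cap \mathcal{C}^{p,q} \hookrightarrow \mathcal{B}^{k}_{q} \overset{\pi_{q+1}}{\twoheadrightarrow} \mathcal{B}^{k}_{q+1} \rightarrow 0,
\]
together with the analogous sequences obtained by replacing $\mathcal{B}$ by $\mathcal{Z}$, and by replacing the terms with the quotients $\mathcal{Z}^{k}_{q}/\mathcal{B}^{k}_{q}$. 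When $\mathcal{R}$ is a field these are exact sequences of $\mathcal{R}$-vector spaces, hence split, yielding (non-canonical) isomorphisms
\[
\mathcal{B}^{k}_{q} \cong \bigl(\mathcal{B}^{k}_{q}\cap\mathcal{C}^{p,q}\bigr)\oplus\mathcal{B}^{k}_{q+1},
\]
and likewise for $\mathcal{Z}^{k}_{q}$ and for $\mathcal{Z}^{k}_{q}/\mathcal{B}^{k}_{q}$.

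Next I would identify the two ends of the index range. Since $\pi_{q}=\operatorname{Id}_{\mathcal{C}}$ for $q\le 0$, we have $\mathcal{B}^{k}_{0}=B^{k}(\mathcal{C},\D)$, and by Proposition \ref{prop:Precocyc} also $\mathcal{Z}^{k}_{0}=\pi_{0}(Z^{k}(\mathcal{C},\D))=Z^{k}(\mathcal{C},\D)$, so that $\mathcal{Z}^{k}_{0}/\mathcal{B}^{k}_{0}=H^{k}(\mathcal{C},\D)$. At the other end, every bigraded component of an element of $\mathcal{C}^{k}$ has second index $q\le k$ (because $\mathcal{C}^{p,q}=\{0\}$ for $p<0$), so $\pi_{k+1}$ annihilates $\mathcal{C}^{k}$; consequently $\mathcal{B}^{k}_{k+1}=\mathcal{Z}^{k}_{k+1}=\{0\}$ and the corresponding cohomology quotient vanishes as well.

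Then I would iterate the split sequences downward from $q=0$, terminating at the vanishing term in degree $k+1$. Telescoping $\mathcal{B}^{k}_{q}\cong(\mathcal{B}^{k}_{q}\cap\mathcal{C}^{p,q})\oplus\mathcal{B}^{k}_{q+1}$ gives
\[
B^{k}(\mathcal{C},\D)=\mathcal{B}^{k}_{0}\cong\bigoplus_{q=0}^{k}\mathcal{B}^{k}_{q}\cap\mathcal{C}^{k-q,q}=\bigoplus_{p+q=k}\mathcal{B}^{k}_{q}\cap\mathcal{C}^{p,q},
\]
and the identical finite induction applied to the cocycle row and to the cohomology row produces the remaining two isomorphisms. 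Each of the three splittings is thus an independent downward induction on the same diagram, requiring no compatibility between them.

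The entire content of the statement, and the only place the hypothesis is genuinely used, is the field assumption: it is precisely what makes the rows of Theorem \ref{teo:Diagram} split, since over a general ring the quotients $\mathcal{B}^{k}_{q+1}$, $\mathcal{Z}^{k}_{q+1}$ (and the cohomology quotients) need not be projective. So there is no real computational obstacle here; the one point to handle with care is to perform the splitting only after passing to the field setting, and to record — as in the Remark following Corollary \ref{cor:Splits} — that the resulting decompositions are non-canonical.
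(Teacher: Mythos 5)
Your proof is correct and follows essentially the same route as the paper: Corollary \ref{cor:Split} is exactly the splitting statement of Corollary \ref{cor:Splits} rewritten through the identifications $\mathcal{B}^{k}_{q}=\pi_{q}(B^{k}(\mathcal{C},\D))$ (definition) and $\mathcal{Z}^{k}_{q}=\pi_{q}(Z^{k}(\mathcal{C},\D))$ (Proposition \ref{prop:Precocyc}), and the paper's intended argument is precisely yours — over a field each row of the diagram in Theorem \ref{teo:Diagram} splits, and iterating from $\mathcal{B}^{k}_{0}=B^{k}(\mathcal{C},\D)$, $\mathcal{Z}^{k}_{0}=Z^{k}(\mathcal{C},\D)$ down to the vanishing terms at $q=k+1$ telescopes into the three direct sums.
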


\section{The recursive point of view}\label{sec:BigradedRecursive}

Before going further in the description of the diagrams appearing in Theorem \ref{teo:Diagram} in the low-degree case, let us interpret this result from a recursive point of view. This perspective is particularly useful when we are interested in understanding an specific cohomology class of $(\mathcal{C},\D)$.

Recall from Theorem \ref{teo:Diagram} that the cocycles, coboundaries, and cohomology of $(\mathcal{C},\D)$ is described by a family of diagrams, one for each $q=0,1,\ldots,k$. By denoting $\mathcal{H}^{p,q}_{q}:=\frac{\mathcal{Z}^{k}_{q}\cap\mathcal{C}^{p,q}}{\mathcal{B}^{k}_{q}\cap\mathcal{C}^{p,q}}$, $\mathcal{H}^{k}_{q}:=\frac{\mathcal{Z}^{k}_{q}}{\mathcal{B}^{k}_{q}}$, and $p=k-q$, the bottom row of the $q$-th diagram is
\begin{align}\label{eq:Zrow}
\xymatrix{
0\ar[r]&\mathcal{H}^{p,q}_{q}\ar@{^{(}->}[r]&\mathcal{H}^{k}_{q}\ar@{->>}[r]^{\bar\pi_{q+1}}&\mathcal{H}^{k}_{q+1}\ar[r]&0.
}
\end{align}
Now, pick some $\eta\in Z^{k}(\mathcal{C},\D)$, with bigraded decomposition
\[
\eta = \sum_{i+j=k}\eta_{i,j} = \eta_{0,k} + \eta_{1,k-1} + \cdots + \eta_{k,0}.
\]
Since $H^{k}(\mathcal{C},\D) = \mathcal{H}^{k}_{0}$, we have $[\eta]\in\mathcal{H}^{k}_{0}$. Denote $[\eta]_{0}:=[\eta]$, and for each $q=1,\ldots,k$, recursively define $[\eta]_{q}\in\mathcal{H}^{k}_{q}$ by $[\eta]_{q}:=\bar\pi_{q}[\eta]_{q-1}$. It is clear that $[\eta]_{q}$ is well defined for each $q$. Explicitly, we have $[\eta]_{k+1}=0$, $[\eta]_{k}=\eta_{0,k}+\mathcal{B}^{k}_{k}$, and in general
\[
[\eta]_{q} = \sum_{\substack{i+j=k\\j\geq q}}\eta_{k-j,j} + \mathcal{B}^{k}_{q}.
\]

In what follows, let us describe the obstructions for the vanishing of the cohomology class $[\eta]\in H^{k}(\mathcal{C},\D)$. By the relation $[\eta]_{q+1}=\bar\pi_{q+1}[\eta]_{q}$, a necessary condition for $[\eta]_{q}=0$ is that $[\eta]_{q+1}=0$. Conversely, if $[\eta]_{q+1}=0$, then the exactness of \eqref{eq:Zrow} implies that $[\eta]_{q} = \eta_{p,q} + \mathcal{B}^{k}_{q}\cap\mathcal{C}^{p,q}$, where $\eta_{p,q}\in\mathcal{Z}^{k}_{q}\cap\mathcal{C}^{p,q}$. So, under the vanishing of $[\eta]_{q+1}$, the class $[\eta_{p,q}]\in\mathcal{H}^{p,q}_{q}$ is well defined, and the property $[\eta]_{q}=0$ is equivalent to the vanishing of $[\eta_{p,q}]$.

To get more insight in the previous facts, let us describe them in an explicit fashion. Consider the bigraded decomposition $\eta=\sum_{i+j=k}\eta_{i,j}$. Clearly, $\pi_{k}\eta=\eta_{0,k}$, so $[\eta]_{k}=[\eta_{0,k}]\in\mathcal{H}^{0,k}_{k}$. Now, suppose that $[\eta_{0,k}]=0$. Then, there exists $\eta'=\partial\xi\in B^{k}(\mathcal{C},\partial)$ such that $\eta_{0,k} = \eta'_{0,k}$. Since $[\eta]=[\eta-\eta']$, the representative $\eta-\eta'$ is such that the component of bidegree $(0,k)$ vanishes. So, without loss of generality, we may assume that $\eta_{0,k}=0$. Then, $[\eta]_{k-1}=[\eta_{1,k-1}]\in\mathcal{H}^{1,k-1}_{k-1}$. Assuming that $[\eta_{1,k-1}]=0$, there exists $\eta''=\partial\xi'$ such that $\pi_{k-1}\eta''=\eta_{1,k-1}$ (so, in particular, $\eta''_{0,k}=0$). The difference $\eta-\eta''$ is a representative of $[\eta]$ such that the components of bidegree $(0,k)$ and $(1,k-1)$ vanish, so we may assume that $\pi_{k-1}\eta=0$. Thus, $[\eta]_{k-2}=[\eta_{2,k-2}]\in\mathcal{H}^{1,k-1}_{k-1}$, and so on.

In summary, the short exact sequences given by the bottom diagrams can be described in the following way. Given $[\eta]\in H^{k}(\mathcal{C},\D)$, an obstruction to $[\eta]=0$ is the cohomology class $[\eta_{0,k}]\in\mathcal{H}^{0,k}_{k}$. If $[\eta_{0,k}]=0$, then the class $[\eta_{1,k-1}]\in\mathcal{H}^{1,k-1}_{k-1}$ is well defined, and is a new obstruction to the vanishing of $[\eta]$. If in addition $[\eta_{1,k-1}]=0$, then $[\eta_{2,k-2}]\in\mathcal{H}^{2,k-2}_{k-2}$ is well defined and is a new obstruction to the vanishing of $[\eta]$. On every stage, under the vanishing of $[\eta_{p,q}]\in\mathcal{H}^{p,q}_{q}$, the class $[\eta_{p+1,q-1}]\in\mathcal{H}^{p+1,q-1}_{q-1}$ is well defined, independent of the choice of the representative $\eta$, and is an obstruction to $[\eta]=0$. In the last stage, our cohomology class is of the form $[\eta]=[\eta_{k,0}]\in\mathcal{H}^{k,0}_{0}$.

\paragraph{Low degree.} Given $\eta\in Z^{k}(\mathcal{C},\D)$, it is important to remark that $[\eta_{p,q}]\in\mathcal{H}^{p,q}_{q}$ is well defined only in the case when $[\eta_{p-1,q+1}]\in\mathcal{H}^{p-1,q+1}_{q+1}$ also is well defined and vanishes, $[\eta_{p-1,q+1}]=0$. As explained in the previous paragraphs, the vanishing of a cohomology class of degree $k$ is controlled by a sequence of $(k+1)$ ``simpler'' cohomology classes, and each of them is obtained by projecting into the bigraded components of the original one, as long as the previous cohomology class vanishes. Let us illustrate this in some low-degree cases.

If $k=1$, then the cohomology is described by only one short exact sequence,
\[
\xymatrix{
0\ar[r]&\mathcal{H}^{1,0}_{0}\ar@{^{(}->}[r]&H^{1}(\mathcal{C},\D)\ar@{->>}[r]^{\bar\pi_{1}}&\mathcal{H}^{0,1}_{1}\ar[r]&0.
}
\]
In this case, the vanishing of a cohomology class $[\eta]\in H^{1}(\mathcal{C},\D)$ is controlled by at most two cohomology classes. In fact, a necessary condition for $[\eta]=0$ is that $[\eta_{0,1}]\in\mathcal{H}^{0,1}_{1}$ vanishes. Conversely, under $[\eta_{0,1}]=0$, the cohomology class $[\eta_{1,0}]\in\mathcal{H}^{1,0}_{0}$ is well-defined and satisfies $[\eta]=[\eta_{1,0}]$, due to the exactness of the sequence. Thus, if $[\eta_{0,1}]=0$, then the vanishing of $[\eta]$ is equivalent to $[\eta_{1,0}]=0$.

For $k=2$, the cohomology is described by means of two short exact sequences, namely
\[
\xymatrix{
0\ar[r]&\mathcal{H}^{2,0}_{0}\ar@{^{(}->}[r]&H^{2}(\mathcal{C},\D)\ar@{->>}[r]^{\bar\pi_{1}}&\mathcal{H}^{2}_{1}\ar[r]&0,
} &&
\xymatrix{
0\ar[r]&\mathcal{H}^{1,1}_{1}\ar@{^{(}->}[r]&\mathcal{H}^{2}_{1}\ar@{->>}[r]^{\bar\pi_{2}}&\mathcal{H}^{0,2}_{2}\ar[r]&0.
}
\]
In this case, the vanishing of $[\eta]\in H^{2}(\mathcal{C},\D)$ is controlled by at most three cohomology classes. A necessary condition is $[\eta_{0,2}]=0$. Under this condition, the cohomology class $[\eta_{1,1}]\in\mathcal{H}^{1,1}_{1}$ is well defined and satisfies $[\eta_{1,1}]=[\eta]_1$, due to the exactness of the second sequence. In this case, a necessary condition for the vanishing of $[\eta]$ is $[\eta_{1,1}]=0$. Under this condition, the cohomology class $[\eta_{2,0}]\in\mathcal{H}^{2,0}_{0}$ is well defined and satisfies $[\eta_{2,0}]=[\eta]$, due to the exactness of the first sequence. Hence, if $[\eta_{0,2}]=0$ and $[\eta_{1,1}]=0$, then the vanishing of $[\eta]$ is equivalent to $[\eta_{2,0}]=0$.

\section{Cohomology in low degree}

In this section, we describe the diagrams of Theorem \ref{teo:Diagram} for the cases $k=1,2,3$ in more detail.

Following the notation of Section \ref{sec:BigradedDescrip}, observe that the homogeneous pre-cocycles of bidegree $(k,0)$ are just the $k$-cocycles in $(\mathcal{N}_{0},\overline{\D})$,
\begin{align}\label{eq:ZC}
  \mathcal{Z}^{k}_{0}\cap\mathcal{C}^{k,0} = \{\eta\in\mathcal{C}^{k,0}\mid \D_{1,0}\eta=0,\D_{0,1}\eta=0\} = Z^{k}(\mathcal{N}_{0},\overline{\D}).
\end{align}

On the other hand, the homogeneous pre-coboundaries of bidegree $(0,k)$ are precisely the $k$-coboundaries of the complex $(\mathcal{C}^{0,\bullet},\D_{0,1})$,
\begin{align}\label{eq:Cobound}
  \mathcal{B}^{k}_{k}\cap\mathcal{C}^{0,k} = B^{k}(\mathcal{C}^{0,\bullet},\D_{0,1}).
\end{align}

We now refine our description of the $\mathcal{R}$-module $\mathcal{Z}^{k}_{1}$ of pre-cocycles for $q=1$.

\paragraph{The mappings} $\rho:\mathcal{A}\rightarrow H(\mathcal{N},\overline\D)$ \textbf{and} $\varrho:\mathcal{J}\rightarrow H(\mathcal{N},\overline\D)$\textbf{.}
For each $k\in\mathbb{Z}$, consider the linear modules $\mathcal{A}^{k}$ and $\mathcal{J}^{k}$, where
\begin{align}\label{eq:SpacesAB}
\mathcal{A}^{k} := \{\pi_{1}(\eta)\mid \eta\in\mathcal{C}^{k},~\pi_{1}(\D\eta)=0\},\text{ and }
\mathcal{J}^{k} := \mathcal{A}^{k}\cap\mathcal{C}^{k-1,1}.
\end{align}
Explicitly, $\xi\in G^{1}\mathcal{C}^{k}$ lies in $\mathcal{A}^{k}$ if and only if $\pi_{2}(\D\xi)=0$ and there exists $\eta\in\mathcal{C}^{k,0}$ such that $\D_{0,1}\eta+\D_{1,0}\xi_{k-1,1}+\D_{2,-1}\xi_{k-2,2}=0$. In particular, $\xi\in\mathcal{C}^{k-1,1}$ lies in $\mathcal{J}^{k}$ if and only if $\D_{0,1}\xi=0$ and there exists $\eta\in\mathcal{C}^{k,0}$ such that $\D_{0,1}\eta+\D_{1,0}\xi=0$.

\begin{lemma}\label{lemma:SpaceB}
  For each $\eta\in\mathcal{C}^{k}$ such that $\pi_{1}(\D\eta)=0$, one has $\operatorname{pr}_{k+1,0}(\D\eta)\in Z^{k+1}(\mathcal{N}_{0},\overline\D)$.
\end{lemma}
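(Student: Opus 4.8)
The plan is to use the hypothesis to collapse $\D\eta$ onto a single bigraded component, and then read off the conclusion from $\D^{2}=0$. Writing $\eta=\sum_{p+q=k}\eta_{p,q}$ and using the formula $(\D\eta)_{i,j}=\D_{0,1}\eta_{i,j-1}+\D_{1,0}\eta_{i-1,j}+\D_{2,-1}\eta_{i-2,j+1}$ together with $\eta_{k+1,-1}=0$, I first compute
\[
\zeta:=\operatorname{pr}_{k+1,0}(\D\eta)=\D_{1,0}\eta_{k,0}+\D_{2,-1}\eta_{k-1,1}\in\mathcal{C}^{k+1,0}.
\]
The content of the hypothesis $\pi_{1}(\D\eta)=0$ is exactly that every bigraded component of $\D\eta$ with second index at least $1$ vanishes; since $\D\eta$ has total degree $k+1$, its unique component with second index $0$ is $(\D\eta)_{k+1,0}=\zeta$. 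Hence the coboundary collapses, $\D\eta=\zeta$.

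The key step is then to apply $\D^{2}=0$, which gives $\D\zeta=\D(\D\eta)=\D^{2}\eta=0$. Decomposing $\D\zeta$ bigradedly and using $\zeta\in\mathcal{C}^{k+1,0}$, the three summands land in pairwise distinct bidegrees, namely $\D_{2,-1}\zeta\in\mathcal{C}^{k+3,-1}=\{0\}$, $\D_{1,0}\zeta\in\mathcal{C}^{k+2,0}$, and $\D_{0,1}\zeta\in\mathcal{C}^{k+1,1}$. Since their sum vanishes, each vanishes separately; in particular $\D_{0,1}\zeta=0$ and $\D_{1,0}\zeta=0$.

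Finally I conclude. As $\zeta\in\mathcal{C}^{k+1,0}$, it automatically lies in $\ker(\D_{2,-1})$, and the relation $\D_{0,1}\zeta=0$ just obtained places $\zeta\in\mathcal{N}^{k+1,0}\subseteq\mathcal{N}_{0}$. Moreover $\overline{\D}\zeta=\D_{1,0}\zeta=0$, so $\zeta$ is a cocycle of $(\mathcal{N}_{0},\overline{\D})$, i.e.\ $\zeta\in Z^{k+1}(\mathcal{N}_{0},\overline{\D})$, as claimed. There is no real obstacle here: the only thing demanding attention is the recognition that the hypothesis forces $\D\eta$ to equal its single surviving component $\zeta$, after which the statement is an immediate consequence of $\D^{2}=0$ and the bigraded bookkeeping. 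One could instead verify $\D_{0,1}\zeta=0$ and $\D_{1,0}\zeta=0$ directly from the coboundary relations \eqref{eq:Cob3}--\eqref{eq:Cob4}, but that route requires unwinding the $j\geq1$ consequences of $\pi_{1}(\D\eta)=0$ and is noticeably messier.
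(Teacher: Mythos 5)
Your proof is correct, and it takes a genuinely different route from the paper's. The paper never observes that the hypothesis collapses $\D\eta$ to a single bigraded component; instead it proves $\D_{0,1}(\operatorname{pr}_{k+1,0}(\D\eta))=0$ and $\D_{1,0}(\operatorname{pr}_{k+1,0}(\D\eta))=0$ by direct computation, repeatedly invoking \eqref{eq:Cob1}--\eqref{eq:Cob4} and adding in the vanishing components $\operatorname{pr}_{k,1}(\D\eta)$ and $\operatorname{pr}_{k-1,2}(\D\eta)$ until all terms cancel --- exactly the ``messier'' component-wise route you mention and set aside in your closing remark. Your observation that $\pi_{1}(\D\eta)=0$ forces $\D\eta=\zeta:=\operatorname{pr}_{k+1,0}(\D\eta)$ (legitimate, since the first-quadrant assumption kills the components with negative second index) lets you replace that entire computation with $\D\zeta=\D^{2}\eta=0$ followed by separation into bidegrees, and the three resulting equations $\D_{2,-1}\zeta=0$, $\D_{1,0}\zeta=0$, $\D_{0,1}\zeta=0$ are precisely the statement $\zeta\in Z^{k+1}(\mathcal{N}_{0},\overline{\D})$. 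At bottom the two arguments rest on the same fact --- the paper's calculation is, in effect, the component-wise expansion of the bidegree $(k+1,1)$ and $(k+2,0)$ pieces of $\D^{2}\eta=0$ --- but yours packages it so that no individual relation among \eqref{eq:Cob1}--\eqref{eq:Cob5} needs to be cited, it is shorter and harder to get wrong, and it adapts verbatim to the more general decompositions $\D=\sum_{r\geq0}\D_{r,1-r}$ mentioned in the Remark of Section~\ref{sec:BCC}. The only thing the paper's version offers in exchange is an explicit record of which structure equations enter the cancellation and how.
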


\begin{proof}
  One must show that $\operatorname{pr}_{k+1,0}(\D\eta)\in\ker\D_{1,0}\cap\ker\D_{0,1}$. First note that
  \[
  \operatorname{pr}_{k+1,0}(\D\eta) = \D_{2,-1}\eta_{k-1,1} + \D_{1,0}\eta_{k,0}.
  \]
  By applying \eqref{eq:Cob3} and \eqref{eq:Cob4},
  \begin{align}
  \D_{0,1}(\operatorname{pr}_{k+1,0}(\D\eta)) &= \D_{0,1}\D_{2,-1}\eta_{k-1,1} + \D_{0,1}\D_{1,0}\eta_{k,0}\nonumber\\
   &= -\D_{1,0}^{2}\eta_{k-1,1} - \D_{2,-1}\D_{0,1}\eta_{k-1,1} -\D_{1,0}\D_{0,1}\eta_{k,0}.\label{eq:ProofLemmaSpaceB1}
  \end{align}
  The condition $\pi_{1}(\D\eta)=0$ implies that $\operatorname{pr}_{k,1}(\D\eta)=\D_{0,1}\eta_{k,0} + \D_{1,0}\eta_{k-1,1} + \D_{2,-1}\eta_{k-2,2} = 0$ which, together with \eqref{eq:ProofLemmaSpaceB1}, leads to
  \begin{align}\label{eq:ProofLemmaSpaceB2}
  \D_{0,1}(\operatorname{pr}_{k+1,0}(\D\eta)) &= \D_{0,1}(\operatorname{pr}_{k+1,0}(\D\eta)) + \D_{1,0}(\operatorname{pr}_{k,1}(\D\eta))\nonumber\\
  &= - \D_{2,-1}\D_{0,1}\eta_{k-1,1} + \D_{1,0}\D_{2,-1}\eta_{k-2,2}.
  \end{align}
  Now, from \eqref{eq:Cob2}, we get
  \begin{align}\label{eq:ProofLemmaSpaceB3}
  \D_{0,1}(\operatorname{pr}_{k+1,0}(\D\eta)) = - \D_{2,-1}\D_{0,1}\eta_{k-1,1} - \D_{2,-1}\D_{1,0}\eta_{k-2,2}.
  \end{align}
  Again, from $\pi_{1}(\D\eta)=0$ we get $\operatorname{pr}_{k-1,2}(\D\eta)=\D_{0,1}\eta_{k-1,1} + \D_{1,0}\eta_{k-2,2} + \D_{2,-1}\eta_{k-3,3} = 0$. By \eqref{eq:ProofLemmaSpaceB3},
  \[
  \D_{0,1}(\operatorname{pr}_{k+1,0}(\D\eta)) = \D_{0,1}(\operatorname{pr}_{k+1,0}(\D\eta)) + \D_{2,-1}(\operatorname{pr}_{k-1,2}(\D\eta)) = \D_{2,-1}^{2}\eta_{k-3,3}.
  \]
  Therefore, $\D_{0,1}(\operatorname{pr}_{k+1,0}(\D\eta))=0$, due to \eqref{eq:Cob1}. In a similar fashion, by applying \eqref{eq:Cob2} and \eqref{eq:Cob3} we obtain
  \[
  \D_{1,0}(\operatorname{pr}_{k+1,0}(\D\eta)) = -\D_{2,-1}\D_{1,0}\eta_{k-1,1} - \D_{0,1}\D_{2,-1}\eta_{k,0} - \D_{2,-1}\D_{0,1}\eta_{k,0}.
  \]
  Note that $\D_{2,-1}\eta_{k,0}=0$, due to its negative bidegree. Taking into account that $\operatorname{pr}_{k,1}(\D\eta)=\D_{0,1}\eta_{k,0} + \D_{1,0}\eta_{k-1,1} + \D_{2,-1}\eta_{k-2,2} = 0$, we get
  \[
  \D_{1,0}(\operatorname{pr}_{k+1,0}(\D\eta)) = \D_{2,-1}^{2}\eta_{k-2,2},
  \]
  which is zero because of \eqref{eq:Cob1}.
\end{proof}

Now, by definition, for each $\xi\in\mathcal{A}^{k}$ there exist $\eta\in\mathcal{C}^{k}$ such that $\pi_{1}\eta=\xi$ and $\pi_{1}(\D\eta)=0$. By Lemma \ref{lemma:SpaceB}, $\eta$ induces a cohomology class
\[
[\D_{2,-1}\eta_{k-1,1} + \D_{1,0}\eta_{k,0}]\in H^{k+1}(\mathcal{N}_{0},\overline\D).
\]
We claim that the cohomology class only depends on $\xi$, that is, it is independent of the choice of $\eta$. Indeed, pick another $\widetilde{\eta}\in\mathcal{C}^{k}$ such that $\pi_{1}\widetilde{\eta}=\xi$ and $\pi_{1}(\D\widetilde{\eta})=0$. Since $\eta_{k-1,1}=\widetilde{\eta}_{k-1,1}=\xi_{k-1,1}$, we get
\[
(\D_{2,-1}\widetilde{\eta}_{k-1,1} + \D_{1,0}\widetilde{\eta}_{k,0})-(\D_{2,-1}\eta_{k-1,1} + \D_{1,0}\eta_{k,0}) = \D_{1,0}(\widetilde{\eta}_{k,0}-\eta_{k,0}).
\]
To see that $\eta$ and $\widetilde{\eta}$ induce the same cohomology class, we just need to check that $\widetilde{\eta}_{k,0}-\eta_{k,0}\in\mathcal{N}_{0}$. From $\pi_{1}(\D\eta)=0$ and $\pi_{1}(\D\widetilde{\eta})=0$ we get that $\D_{0,1}\eta + \D_{0,1}\xi_{k,0} =0$ and $\D_{0,1}\widetilde{\eta} + \D_{0,1}\xi_{k,0}=0$. Therefore, $\widetilde{\eta}_{k,0}-\eta_{k,0}\in\ker^{k,0}\D_{0,1}=\mathcal{N}^{k,0}$. Hence, the cohomology class is well defined.

This can be summarized in the following fact.

\begin{lemma}\label{lemma:Rho}
  There exists a well-defined linear map $\rho_{k}:\mathcal{A}^{k}\rightarrow H^{k+1}(\mathcal{N}_{0},\overline\D)$ given by
  \[
  \rho_{k}(\xi) := [\D_{2,-1}\xi_{k-1,1} + \D_{1,0}\eta_{k,0}],
  \]
  where $\eta\in\mathcal{C}^{k}$ is such that $\pi_{1}\eta=\xi$ and $\pi_{1}(\D\eta)=0$. Moreover, we have the identity $\mathcal{Z}^{k}_{1}=\ker(\rho_{k})$.
\end{lemma}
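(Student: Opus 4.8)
The statement bundles together three things to verify: that $\rho_{k}$ lands in $H^{k+1}(\mathcal{N}_{0},\overline\D)$ and is independent of the lift $\eta$, that it is $\mathcal{R}$-linear, and the identity $\mathcal{Z}^{k}_{1}=\ker(\rho_{k})$. The first of these is already settled by the material preceding the statement, so in the write-up I would only cite it: Lemma \ref{lemma:SpaceB} guarantees that $\operatorname{pr}_{k+1,0}(\D\eta)=\D_{2,-1}\eta_{k-1,1}+\D_{1,0}\eta_{k,0}$ is an element of $Z^{k+1}(\mathcal{N}_{0},\overline\D)$, so its class is a legitimate element of $H^{k+1}(\mathcal{N}_{0},\overline\D)$, and the paragraph just above the lemma shows this class is unchanged when $\eta$ is replaced by any other lift $\widetilde\eta$ of $\xi$ with $\pi_{1}(\D\widetilde\eta)=0$.

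For linearity I would first note that $\mathcal{A}^{k}$ is the image under the linear map $\pi_{1}$ of the submodule $\ker(\pi_{1}\circ\D)\subseteq\mathcal{C}^{k}$, hence itself a submodule, so the claim is meaningful. Given $\xi,\xi'\in\mathcal{A}^{k}$ with lifts $\eta,\eta'$ and scalars $a,b\in\mathcal{R}$, the element $a\eta+b\eta'$ is a lift of $a\xi+b\xi'$ satisfying $\pi_{1}(\D(a\eta+b\eta'))=0$; using the freedom in the choice of lift established above, together with the linearity of $\operatorname{pr}_{k+1,0}\circ\D$ and of passage to cohomology classes, one gets $\rho_{k}(a\xi+b\xi')=a\rho_{k}(\xi)+b\rho_{k}(\xi')$.

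The heart of the proof is the identity $\mathcal{Z}^{k}_{1}=\ker(\rho_{k})$, and the plan is simply to unwind both definitions and observe that they coincide. By definition $\xi\in\mathcal{Z}^{k}_{1}$ exactly when there is some $\eta\in\mathcal{M}^{k}$ with $\pi_{1}(\eta)=\xi$ and $\pi_{1}(\D\eta)=0$. The condition $\pi_{1}(\D\eta)=0$ annihilates every bigraded component $(\D\eta)_{i,j}$ with $j\geq1$, so all the defining membership requirements $(\D\eta)_{i,j}\in B^{k+1}(\mathcal{N}_{j},\overline\D)$ for $j\geq1$ hold automatically (the zero element is always a coboundary), and the sole surviving constraint imposed by $\eta\in\mathcal{M}^{k}$ is $(\D\eta)_{k+1,0}=\operatorname{pr}_{k+1,0}(\D\eta)\in B^{k+1}(\mathcal{N}_{0},\overline\D)$. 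Recognizing that the existence of a lift $\eta$ with $\pi_{1}\eta=\xi$ and $\pi_{1}(\D\eta)=0$ is precisely the statement $\xi\in\mathcal{A}^{k}$, and that $\operatorname{pr}_{k+1,0}(\D\eta)\in B^{k+1}(\mathcal{N}_{0},\overline\D)$ is precisely $\rho_{k}(\xi)=0$, I conclude that $\xi\in\mathcal{Z}^{k}_{1}$ if and only if $\xi\in\mathcal{A}^{k}$ and $\rho_{k}(\xi)=0$, i.e.\ $\xi\in\ker(\rho_{k})$.

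The only point requiring care, rather than genuine difficulty, is the bookkeeping in this last step: one must confirm that a single lift $\eta$ can simultaneously witness $\xi\in\mathcal{A}^{k}$ and $\eta\in\mathcal{M}^{k}$, which is immediate once one observes that the $j\geq1$ clauses in the definition of $\mathcal{M}^{k}$ become vacuous under $\pi_{1}(\D\eta)=0$. It is exactly the already-proved independence of $\rho_{k}$ from the chosen lift that licenses testing membership with whichever lift is convenient, so no serious obstacle arises and both inclusions collapse to the same reformulation of the defining conditions.
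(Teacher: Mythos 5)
Your proposal is correct and follows essentially the same route as the paper: well-definedness is delegated to Lemma \ref{lemma:SpaceB} and the discussion preceding the statement, linearity follows from linearity of $\D_{2,-1}$, $\D_{1,0}$, and $\pi_{1}$, and the identity $\mathcal{Z}^{k}_{1}=\ker(\rho_{k})$ is obtained by unwinding both definitions. Your added remark that the $j\geq1$ clauses in the definition of $\mathcal{M}^{k}$ become vacuous under $\pi_{1}(\D\eta)=0$ is exactly the bookkeeping the paper leaves implicit, so it is a slightly more explicit rendering of the same argument.
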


\begin{proof}
  The fact that $\rho_{k}(\xi)$ is well defined follows from our previous discussion, in which we have explained that $[\D_{2,-1}\xi_{k-1,1} + \D_{1,0}\eta_{k,0}]$ only depends of $\xi$. Moreover, the linearity of $\rho_{k}$ follows from the linearity of $\D_{2,-1}$, $\D_{1,0}$, and $\pi_{1}$. So, it is left to show that $\ker(\rho_{k})=\mathcal{Z}^{k}_{1}$. Recall that, by definition, the elements of $\mathcal{Z}^{k}_{1}$ are of the form $\pi_{1}(\eta)$, where $\D_{2,-1}\eta_{k-1,1} + \D_{1,0}\eta_{k,0}\in B^{k+1}(\mathcal{N}_{0},\overline\D)$, and $\pi_{1}(\D\eta)=0$. Therefore,
  \[
  \ker(\rho_{k}) = \{\pi_{1}(\eta)\mid \eta\in\mathcal{C}^{k},~\pi_{1}(\D\eta)=0, \text{ and } \D_{2,-1}\eta_{k-1,1} + \D_{1,0}\eta_{k,0}\in B^{k+1}(\mathcal{N}_{0},\overline\D)\} = \mathcal{Z}^{k}_{1}.
  \]
\end{proof}

For each $k\in\mathbb{Z}_{\geq0}$, define $\varrho_{k}:\mathcal{J}^{k}\rightarrow H^{k+1}(\mathcal{N}_{0},\overline\D)$ by the restriction $\varrho_{k}:=\rho_{k}|_{\mathcal{J}^{k}}$. As a consequence of Lemma \ref{lemma:Rho}, we have
\[
\ker(\varrho_{k})=\mathcal{Z}^{k}_{1}\cap\mathcal{C}^{k-1,1}.
\]

\paragraph{Refining the splittings for the low-degree cohomology.} By applying Theorem \ref{teo:Diagram}, we describe the first, second, and third cohomology of the bigraded cochain complex $(\mathcal{C},\D)$ in terms of the cochain complexes $(\mathcal{C}^{p,\bullet},\D_{0,1})$, $(\mathcal{N}_{q},\overline\D)$, and the mappings $\rho:\mathcal{A}\rightarrow H(\mathcal{N},\overline\D)$ and $\varrho:\mathcal{J}\rightarrow H(\mathcal{N},\overline\D)$ given in Lemmas \ref{lemma:SubCpx} and \ref{lemma:Rho}.

\paragraph{First cohomology.} Here we state our main result on the first cohomology of $(\mathcal{C},\D)$.

\begin{theorem}\label{teo:H1}
We have the following commutative diagram with exact rows and columns,
\[
\xymatrix@=1.4em{
&0\ar[d]&0\ar[d]&0\ar[d]\\
0\ar[r]&B^{1}(\mathcal{N}_{0},\overline{\D})\ar@{^{(}->}[d]\ar@{^{(}->}[r]&B^{1}(\mathcal{C},\D)\ar@{^{(}->}[d]\ar[r]^{\pi_{1}}&B^{1}(\mathcal{C}^{0,\bullet},\D_{0,1})\ar@{^{(}->}[d]\ar[r]&0\\
0\ar[r]&Z^{1}(\mathcal{N}_{0},\overline{\D})\ar[d]\ar@{^{(}->}[r]&Z^{1}(\mathcal{C},\D)\ar[d]\ar[r]^{\pi_{1}}&\ker(\rho_{1})\ar[d]\ar[r]&0.\\
0\ar[r]&H^{1}(\mathcal{N}_{0},\overline{\D})\ar[d]\ar@{->}[r]&H^{1}(\mathcal{C},\D)\ar[d]\ar@{->}[r]&\frac{\ker(\rho_{1})}{B^{1}(\mathcal{C}^{0,\bullet},\D_{0,1})}\ar[d]\ar[r]&0\\
&0&0&0}
\]
\end{theorem}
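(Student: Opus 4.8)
The plan is to derive the diagram of Theorem \ref{teo:H1} as the specialization of the general diagram of Theorem \ref{teo:Diagram} to the case $k=1$, using the explicit identifications already established in equations \eqref{eq:ZC} and \eqref{eq:Cobound} together with Lemma \ref{lemma:Rho}. For $k=1$ there are only two relevant values of $q$, namely $q=0$ and $q=1$, and the whole diagram collapses onto a single instance of the $3\times 3$ commutative diagram from Theorem \ref{teo:Diagram} with $p=1$, $q=0$. So the work is entirely a matter of translating each of the nine entries of that abstract diagram into the concrete modules named in the statement.

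\medskip

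First I would identify the left column. By definition $\mathcal{N}_0 = \ker(\D_{0,1}\colon\mathcal{C}^{\bullet,0}\to\mathcal{C}^{\bullet,1})$ with coboundary $\overline\D=\D_{1,0}$, so its degree-$1$ cocycles, coboundaries, and cohomology are exactly $Z^1(\mathcal{N}_0,\overline\D)$, $B^1(\mathcal{N}_0,\overline\D)$, $H^1(\mathcal{N}_0,\overline\D)$. I must check these agree with the abstract entries $\mathcal{B}^{1}_{0}\cap\mathcal{C}^{1,0}$, $\mathcal{Z}^{1}_{0}\cap\mathcal{C}^{1,0}$, and their quotient. For the cocycle space this is precisely \eqref{eq:ZC} with $k=1$. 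For the coboundary space I would unwind $\mathcal{B}^1_0\cap\mathcal{C}^{1,0}=\pi_0(B^1(\mathcal{C},\D))\cap\mathcal{C}^{1,0}$: since $\pi_0=\operatorname{Id}$, this is $B^1(\mathcal{C},\D)\cap\mathcal{C}^{1,0}$, i.e. the $1$-coboundaries of $\mathcal{C}$ lying in bidegree $(1,0)$, and one checks that these coincide with $\D_{1,0}(\mathcal{N}^{0,0})=B^1(\mathcal{N}_0,\overline\D)$ (the only way $\D\xi$ lands in $\mathcal{C}^{1,0}$ is for $\xi\in\mathcal{C}^{0,0}$ with $\D_{0,1}\xi=0$, i.e. $\xi\in\mathcal{N}^{0,0}$, forcing $\D\xi=\D_{1,0}\xi$).

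\medskip

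Next I would identify the right column. Here $q=1$, $p=0$, and the abstract entries are $\mathcal{B}^1_1\cap\mathcal{C}^{0,1}$, $\mathcal{Z}^1_1\cap\mathcal{C}^{0,1}$, and their quotient. By \eqref{eq:Cobound} with $k=1$, the coboundary term is $B^1(\mathcal{C}^{0,\bullet},\D_{0,1})$. For the cocycle term, Lemma \ref{lemma:Rho} gives $\mathcal{Z}^1_1=\ker(\rho_1)$; I then intersect with $\mathcal{C}^{0,1}=G^1\mathcal{C}^1$ and observe that since every element of $\mathcal{Z}^1_1$ already lies in $G^1\mathcal{C}$ (it is $\pi_1$ of something, and for $k=1$ we have $G^1\mathcal{C}^1=\mathcal{C}^{0,1}$), the intersection is all of $\ker(\rho_1)$. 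The middle column is the cocycle/coboundary/cohomology of $(\mathcal{C},\D)$ itself in degree $1$, since $\mathcal{Z}^1_0=\pi_0(Z^1(\mathcal{C},\D))=Z^1(\mathcal{C},\D)$ and likewise for coboundaries. The top two rows of the diagram are then exactly the $k=1$ instance of the exact rows in Theorem \ref{teo:Diagram}, with the maps $\pi_1$ being the projection onto the $\mathcal{C}^{0,1}$-component; the surjectivity of $\pi_1$ onto $B^1(\mathcal{C}^{0,\bullet},\D_{0,1})$ and onto $\ker(\rho_1)$ is already encoded in the exactness of those rows.

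\medskip

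Finally, the bottom row and its exactness, together with the exactness of all three columns, follow formally from Theorem \ref{teo:Diagram} once the entries are identified, so no new homological input is needed. \textbf{The main obstacle} I anticipate is not any single deep step but the careful bookkeeping in the two boundary identifications: verifying that $B^1(\mathcal{C},\D)\cap\mathcal{C}^{1,0}$ really equals $B^1(\mathcal{N}_0,\overline\D)$ (rather than something larger), and that the right-hand cocycle term is genuinely all of $\ker(\rho_1)$ and not a proper subspace, both require using that bidegrees $(p,q)$ with $p$ or $q$ negative vanish, so that the relevant components of $\D$ reduce to a single operator. These degeneracy arguments are where an error would most likely hide, so I would present them explicitly rather than leaving them to the reader.
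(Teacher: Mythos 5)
Your proposal is correct and follows essentially the same route as the paper: both specialize Theorem \ref{teo:Diagram} to $k=1$ and identify the entries via \eqref{eq:ZC}, \eqref{eq:Cobound}, and Lemma \ref{lemma:Rho}, together with the key identity $\mathcal{B}^{1}_{0}\cap\mathcal{C}^{1,0}=B^{1}(\mathcal{N}_{0},\overline\D)$, which the paper merely states and you justify with the same bidegree-degeneracy argument. Your extra care about $\mathcal{Z}^{1}_{1}\subseteq G^{1}\mathcal{C}^{1}=\mathcal{C}^{0,1}$ is exactly the bookkeeping implicit in the paper's proof.
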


Observe that this result on the first cohomology of $(\mathcal{C},\D)$ involves the map $\rho_{1}:\mathcal{A}^{1}\rightarrow H^{2}(\mathcal{N}_{0},\overline\D)$, which is related to the second cohomology of $(\mathcal{N}_{0},\overline\D)$.

\begin{proof}[Proof of Theorem \ref{teo:H1}]
The fact that the diagram of Theorem \ref{teo:H1} coincides with the one given in Theorem \ref{teo:Diagram} for $k=1$ follows from equations \eqref{eq:ZC} and \eqref{eq:Cobound}, the definition of $\varrho_{k}$, and from Lemma \ref{lemma:Rho}. We also need the following identity,
\[
\mathcal{B}^{1}_{0}\cap\mathcal{C}^{1,0} = \{\D_{1,0}f\mid \D_{0,1}f=0, f\in\mathcal{C}^{0}\} = B^{1}(\mathcal{N}_{0},\overline\D).
\]
\end{proof}

\begin{corollary}\label{cor:H1}
  In the case when $\mathcal{R}$ is a field, the coboundary, cocycle, and cohomology spaces of degree $1$ admit the following splittings as vector spaces:
  \begin{gather*}
  B^{1}(\mathcal{C},\D)\cong B^{1}(\mathcal{N}_{0},\overline\D)\oplus B^{1}(\mathcal{C}^{0,\bullet},\D_{0,1}), \qquad
  Z^{1}(\mathcal{C},\D)\cong Z^{1}(\mathcal{N}_{0},\overline\D)\oplus \ker(\rho_{1}),\\
  H^{1}(\mathcal{C},\D)\cong H^{1}(\mathcal{N}_{0},\overline\D)\oplus\frac{\ker(\rho_{1})}{B^{1}(\mathcal{C}^{0,\bullet},\D_{0,1})}.
  \end{gather*}
\end{corollary}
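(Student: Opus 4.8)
The plan is to obtain all three decompositions directly from the short exact sequences already furnished by Theorem~\ref{teo:H1}, using nothing more than the fact that over a field every short exact sequence of vector spaces splits. Concretely, the three horizontal rows of the diagram in Theorem~\ref{teo:H1} are exactly the short exact sequences
\[
&0\rightarrow B^{1}(\mathcal{N}_{0},\overline{\D})\rightarrow B^{1}(\mathcal{C},\D)\rightarrow B^{1}(\mathcal{C}^{0,\bullet},\D_{0,1})\rightarrow0,\\
&0\rightarrow Z^{1}(\mathcal{N}_{0},\overline{\D})\rightarrow Z^{1}(\mathcal{C},\D)\rightarrow\ker(\rho_{1})\rightarrow0,\\
&0\rightarrow H^{1}(\mathcal{N}_{0},\overline{\D})\rightarrow H^{1}(\mathcal{C},\D)\rightarrow\tfrac{\ker(\rho_{1})}{B^{1}(\mathcal{C}^{0,\bullet},\D_{0,1})}\rightarrow0.
\]
Since $\mathcal{R}$ is assumed to be a field, every module occurring here is a vector space, so each of these sequences splits; applying the splitting lemma to the three rows in turn yields precisely the claimed isomorphisms for $B^{1}(\mathcal{C},\D)$, $Z^{1}(\mathcal{C},\D)$, and $H^{1}(\mathcal{C},\D)$.

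Equivalently, one may read off the statement as the $k=1$ instance of Corollary~\ref{cor:Split}, after identifying the two homogeneous summands appearing there ($p+q=1$, so $(p,q)\in\{(1,0),(0,1)\}$) with the terms in the corollary. The identifications are those used in the proof of Theorem~\ref{teo:H1}: from \eqref{eq:ZC} and the $k=1$ case of the displayed identity in that proof we get $\mathcal{Z}^{1}_{0}\cap\mathcal{C}^{1,0}=Z^{1}(\mathcal{N}_{0},\overline{\D})$ and $\mathcal{B}^{1}_{0}\cap\mathcal{C}^{1,0}=B^{1}(\mathcal{N}_{0},\overline{\D})$; from Lemma~\ref{lemma:Rho} we get $\mathcal{Z}^{1}_{1}=\ker(\rho_{1})$; and from \eqref{eq:Cobound} we get $\mathcal{B}^{1}_{1}\cap\mathcal{C}^{0,1}=B^{1}(\mathcal{C}^{0,\bullet},\D_{0,1})$. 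Quotienting the second and fourth identities by the first and third then recovers the cohomology summands $H^{1}(\mathcal{N}_{0},\overline{\D})$ and $\tfrac{\ker(\rho_{1})}{B^{1}(\mathcal{C}^{0,\bullet},\D_{0,1})}$.

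I expect no substantive obstacle: the entire content is carried by Theorem~\ref{teo:H1}, and the remark following Corollary~\ref{cor:Splits} already flags that such splittings are non-canonical and exist only in the vector-space category, which is exactly the hypothesis imposed here. The only point deserving a moment's care is the identity $\mathcal{Z}^{1}_{1}=\ker(\rho_{1})$, which holds because for $k=1$ the projection $\pi_{1}$ lands entirely in $\mathcal{C}^{0,1}$; hence $\mathcal{Z}^{1}_{1}=\mathcal{Z}^{1}_{1}\cap\mathcal{C}^{0,1}$ coincides with $\ker(\varrho_{1})=\ker(\rho_{1})$, and there is no discrepancy between the $\rho_{1}$ appearing in the right-hand column of Theorem~\ref{teo:H1} and the homogeneous pre-cocycle space of bidegree $(0,1)$ appearing in Corollary~\ref{cor:Split}.
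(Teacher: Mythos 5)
Your proof is correct and follows essentially the paper's own route: the corollary is obtained by splitting the three exact rows of the diagram in Theorem~\ref{teo:H1} as vector spaces over the field $\mathcal{R}$, which is exactly the $k=1$ specialization of Corollary~\ref{cor:Split} with the identifications \eqref{eq:ZC}, \eqref{eq:Cobound}, Lemma~\ref{lemma:Rho}, and the identity $\mathcal{B}^{1}_{0}\cap\mathcal{C}^{1,0}=B^{1}(\mathcal{N}_{0},\overline{\D})$ used in the proof of that theorem. Your closing remark that $\mathcal{Z}^{1}_{1}=\ker(\varrho_{1})=\ker(\rho_{1})$ for $k=1$ (since $\pi_{1}$ lands in $\mathcal{C}^{0,1}$) correctly resolves the only notational subtlety.
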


Explicitly,
\begin{align*}
B^{1}(\mathcal{N}_{0},\overline\D) &= \{\D_{1,0}f \mid f\in\mathcal{C}^{0,0}, \D_{0,1}f=0\},\\
B^{1}(\mathcal{C}^{0,\bullet},\D_{0,1}) &= \{\D_{0,1}f \mid f\in\mathcal{C}^{0,0}\},\\
\mathcal{A}^{1} &= \{Y\in\mathcal{C}^{0,1}\mid\D_{0,1}Y=0,~\exists\alpha_{Y}\in\mathcal{C}^{1,0}:\D_{0,1}\alpha_{Y}+\D_{1,0}Y=0\},\\
Z^{1}(\mathcal{N}_{0},\overline\D) &= \{\alpha\in\mathcal{C}^{1,0} \mid \D_{0,1}\alpha=0,\D_{1,0}\alpha=0\},\\
\ker(\rho_{1}) &= \{Y\in\mathcal{A}^{1} \mid \D_{2,-1}Y + \D_{1,0}\alpha_{Y}\in B^{1}(\mathcal{N}_{0},\overline\D)\}.
\end{align*}

\paragraph{Second cohomology.} Similarly, the $\mathcal{R}$-modules of cocycles, coboundaries, and cohomology of degree 2 of the bigraded cochain complex $(\mathcal{C},\D)$ are described by the following more explicit diagrams.

\begin{theorem}\label{teo:H2}
We have the following commutative diagrams with exact rows and columns,
\[
\xymatrix@=1.12em{
&0\ar[d]&0\ar[d]&0\ar[d]\\
0\ar[r]&B^{2}(\mathcal{C},\D)\cap\mathcal{C}^{2,0}\ar@{^{(}->}[d]\ar@{^{(}->}[r]&B^{2}(\mathcal{C},\D)\ar@{^{(}->}[d]\ar[r]^{\pi_{1}}&\mathcal{B}^{2}_{1}\ar@{^{(}->}[d]\ar[r]&0\phantom{.}\\
0\ar[r]&Z^{2}(\mathcal{N}_{0},\overline\D)\ar[d]\ar@{^{(}->}[r]&Z^{2}(\mathcal{C},\D)\ar[d]\ar[r]^{\pi_{1}}&\ker(\rho_{2})\ar[d]\ar[r]&0,\\
0\ar[r]&\frac{Z^{2}(\mathcal{N}_{0},\overline\D)}{B^{2}(\mathcal{C},\D)\cap\mathcal{C}^{2,0}}\ar[d]\ar@{->}[r]&H^{2}(\mathcal{C},\D)\ar[d]\ar@{->}[r]&\frac{\ker(\rho_{2})}{\mathcal{B}^{2}_{1}}\ar[d]\ar[r]&0\phantom{.}\\
&0&0&0} ~
\xymatrix@=1.12em{
&0\ar[d]&0\ar[d]&0\ar[d]\\
0\ar[r]&\mathcal{B}^{2}_{1}\cap\mathcal{C}^{1,1}\ar@{^{(}->}[d]\ar@{^{(}->}[r]&\mathcal{B}^{2}_{1}\ar@{^{(}->}[d]\ar[r]^{\pi_{2}}&B^{2}(\mathcal{C}^{0,\bullet},\D_{0,1})\ar@{^{(}->}[d]\ar[r]&0\phantom{.}\\
0\ar[r]&\ker(\varrho_{2})\ar[d]\ar@{^{(}->}[r]&\ker(\rho_{2})\ar[d]\ar[r]^{\pi_{2}}&\mathcal{Z}^{2}_{2}\ar[d]\ar[r]&0.\\
0\ar[r]&\frac{\ker(\varrho_{2})}{\mathcal{B}^{2}_{1}\cap\mathcal{C}^{1,1}}\ar[d]\ar@{->}[r]&\frac{\ker(\rho_{2})}{\mathcal{B}^{2}_{1}}\ar[d]\ar@{->}[r]&\frac{\mathcal{Z}^{2}_{2}}{B^{2}(\mathcal{C}^{0,\bullet},\D_{0,1})}\ar[d]\ar[r]&0\phantom{.}\\
&0&0&0}
\]
\end{theorem}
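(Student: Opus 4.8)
The plan is to recognize the two displayed diagrams as the general diagrams of Theorem \ref{teo:Diagram} specialized to $k=2$, taken for $q=0$ and $q=1$ respectively, with every abstract entry rewritten by means of the identities established above. Thus the argument is purely a matter of translating the entries $\mathcal{B}^2_q$, $\mathcal{B}^2_q\cap\mathcal{C}^{p,q}$, $\mathcal{Z}^2_q$, and $\mathcal{Z}^2_q\cap\mathcal{C}^{p,q}$ (for $p+q=2$) into the concrete modules appearing in the statement; once this dictionary is in place, the commutativity of the squares, the exactness of all rows and columns, and the definition of the induced maps are inherited verbatim from Theorem \ref{teo:Diagram}, exactly as in the proof of Theorem \ref{teo:H1}.

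For the first diagram I would take $q=0$, $p=2$. Since $\pi_0=\operatorname{Id}_{\mathcal{C}}$, we have $\mathcal{B}^2_0=B^2(\mathcal{C},\D)$ and $\mathcal{Z}^2_0=Z^2(\mathcal{C},\D)$, so in particular $\mathcal{B}^2_0\cap\mathcal{C}^{2,0}=B^2(\mathcal{C},\D)\cap\mathcal{C}^{2,0}$. The middle-left corner is rewritten by \eqref{eq:ZC} with $k=2$, giving $\mathcal{Z}^2_0\cap\mathcal{C}^{2,0}=Z^2(\mathcal{N}_0,\overline\D)$, and the entire right column is identified through Lemma \ref{lemma:Rho}, which yields $\mathcal{Z}^2_1=\ker(\rho_2)$. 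These substitutions transform the abstract $q=0$ diagram into the first displayed one.

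For the second diagram I would take $q=1$, $p=1$. Lemma \ref{lemma:Rho} again gives $\mathcal{Z}^2_1=\ker(\rho_2)$, while the identity $\ker(\varrho_2)=\mathcal{Z}^2_1\cap\mathcal{C}^{1,1}$ recorded just after the definition of $\varrho$ following Lemma \ref{lemma:Rho} rewrites the middle-left corner. For the top-right corner, I would first note that $\pi_2$ maps $\mathcal{C}^2$ into $\mathcal{C}^{0,2}$, so that $\mathcal{B}^2_2=\mathcal{B}^2_2\cap\mathcal{C}^{0,2}$, and then apply \eqref{eq:Cobound} with $k=2$ to obtain $\mathcal{B}^2_2=B^2(\mathcal{C}^{0,\bullet},\D_{0,1})$; the module $\mathcal{Z}^2_2$ in the lower-right corner is left in its abstract form. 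With these replacements the abstract $q=1$ diagram becomes the second displayed one.

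The step deserving the most attention is conceptual rather than computational, and it is the one point where degree two genuinely differs from degree one. In Theorem \ref{teo:H1} the analogous lower-left corner collapses, because $\mathcal{B}^1_0\cap\mathcal{C}^{1,0}=B^1(\mathcal{N}_0,\overline\D)$ and hence the corner is the honest cohomology $H^1(\mathcal{N}_0,\overline\D)$. Here, by contrast, $B^2(\mathcal{C},\D)\cap\mathcal{C}^{2,0}$ is strictly larger than $B^2(\mathcal{N}_0,\overline\D)$: a $2$-coboundary of $(\mathcal{C},\D)$ landing in bidegree $(2,0)$ may acquire a contribution $\D_{2,-1}\xi_{0,1}$ from a component $\xi_{0,1}\in\mathcal{C}^{0,1}$, so it need not be the $\D_{1,0}$-image of a $\D_{0,1}$-closed element of $\mathcal{C}^{1,0}$. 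I would therefore resist simplifying this corner and keep it as the quotient $\tfrac{Z^2(\mathcal{N}_0,\overline\D)}{B^2(\mathcal{C},\D)\cap\mathcal{C}^{2,0}}$ written in the statement. Beyond this observation, no separate diagram chase is required, since all structural properties of both diagrams are supplied by Theorem \ref{teo:Diagram}.
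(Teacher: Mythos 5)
Your proposal is correct and takes essentially the same route as the paper, which proves Theorem \ref{teo:H2} precisely by specializing Theorem \ref{teo:Diagram} to $k=2$ with $q=0,1$ and rewriting the entries via \eqref{eq:ZC}, \eqref{eq:Cobound}, Lemma \ref{lemma:Rho}, and the definition of $\varrho_{2}$, in the manner of the proof of Theorem \ref{teo:H1}. Your observation that the corner $B^{2}(\mathcal{C},\D)\cap\mathcal{C}^{2,0}$ must \emph{not} be simplified to $B^{2}(\mathcal{N}_{0},\overline\D)$ — since a coboundary in bidegree $(2,0)$ can carry a $\D_{2,-1}$-contribution — agrees with the paper's explicit description of that module and correctly identifies the one genuine difference from the degree-one case.
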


Observe that this result on the second cohomology of $(\mathcal{C},\D)$ involves the maps $\rho_{2}:\mathcal{A}^{2}\rightarrow H^{3}(\mathcal{N}_{0},\overline\D)$ and $\varrho_{2}:\mathcal{J}^{2}\rightarrow H^{3}(\mathcal{N}_{0},\overline\D)$, related to the third cohomology of $(\mathcal{N}_{0},\overline\D)$. Also, the submodule $\mathcal{Z}^{2}_{2}$ is related to the 3-coboundaries of $(\mathcal{N}_{1},\overline\D)$.

\begin{corollary}\label{cor:H2}
  In the case when $\mathcal{R}$ is a field, the coboundary, cocycle, and cohomology spaces of degree $2$ admit the following splittings as vector spaces:
  \begin{align*}
  B^{2}(\mathcal{C},\D) &\cong (B^{2}(\mathcal{C},\D)\cap\mathcal{C}^{2,0})\oplus(\mathcal{B}^{2}_{1}\cap\mathcal{C}^{1,1})\oplus B^{2}(\mathcal{C}^{0,\bullet},\D_{0,1}), \\
  Z^{2}(\mathcal{C},\D)&\cong Z^{2}(\mathcal{N}_{0},\overline\D)\oplus\ker(\varrho_{2})\oplus\mathcal{Z}^{2}_{2}, \\
  H^{2}(\mathcal{C},\D)&\cong \frac{Z^{2}(\mathcal{N}_{0},\overline\D)}{B^{2}(\mathcal{C},\D)\cap\mathcal{C}^{2,0}} \oplus\frac{\ker(\varrho_{2})}{\mathcal{B}^{2}_{1}\cap\mathcal{C}^{1,1}}\oplus\frac{\mathcal{Z}^{2}_{2}}{B^{2}(\mathcal{C}^{0,\bullet},\D_{0,1})}.
  \end{align*}
\end{corollary}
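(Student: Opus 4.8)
The plan is to obtain the three isomorphisms as a direct specialization of Corollary \ref{cor:Split} to degree $k=2$, the homological work having already been carried out there and in Theorem \ref{teo:H2}. Since $\mathcal{R}$ is a field, Corollary \ref{cor:Split} gives
\[
B^{2}(\mathcal{C},\D)\cong\bigoplus_{p+q=2}\mathcal{B}^{2}_{q}\cap\mathcal{C}^{p,q}, && Z^{2}(\mathcal{C},\D)\cong\bigoplus_{p+q=2}\mathcal{Z}^{2}_{q}\cap\mathcal{C}^{p,q},\\
H^{2}(\mathcal{C},\D)\cong\bigoplus_{p+q=2}\tfrac{\mathcal{Z}^{2}_{q}\cap\mathcal{C}^{p,q}}{\mathcal{B}^{2}_{q}\cap\mathcal{C}^{p,q}},
\]
where the bidegree $(p,q)$ runs over $(2,0)$, $(1,1)$, and $(0,2)$. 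The whole proof then reduces to rewriting the six summands $\mathcal{B}^{2}_{q}\cap\mathcal{C}^{p,q}$ and $\mathcal{Z}^{2}_{q}\cap\mathcal{C}^{p,q}$ in the terms appearing in the statement; the cohomology splitting then follows at once by passing to the corresponding quotients.

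First I would treat the two extreme bidegrees using identities already recorded in the text. At $q=0$ the projection $\pi_{0}$ is the identity on $\mathcal{C}$, so $\mathcal{B}^{2}_{0}\cap\mathcal{C}^{2,0}=B^{2}(\mathcal{C},\D)\cap\mathcal{C}^{2,0}$, while \eqref{eq:ZC} identifies $\mathcal{Z}^{2}_{0}\cap\mathcal{C}^{2,0}=Z^{2}(\mathcal{N}_{0},\overline{\D})$. At $q=2$, every element of $\mathcal{Z}^{2}_{2}$ already lies in $\mathcal{C}^{0,2}$, the only bidegree of total degree $2$ with $q\geq 2$, so the intersection is redundant and $\mathcal{Z}^{2}_{2}\cap\mathcal{C}^{0,2}=\mathcal{Z}^{2}_{2}$, whereas \eqref{eq:Cobound} gives $\mathcal{B}^{2}_{2}\cap\mathcal{C}^{0,2}=B^{2}(\mathcal{C}^{0,\bullet},\D_{0,1})$. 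For the middle bidegree $q=1$, the coboundary summand $\mathcal{B}^{2}_{1}\cap\mathcal{C}^{1,1}$ is already in the desired form, and the cocycle summand is handled by the identity $\ker(\varrho_{k})=\mathcal{Z}^{k}_{1}\cap\mathcal{C}^{k-1,1}$ obtained after Lemma \ref{lemma:Rho}, which for $k=2$ reads $\mathcal{Z}^{2}_{1}\cap\mathcal{C}^{1,1}=\ker(\varrho_{2})$. Substituting these identifications into the three displayed direct sums yields exactly the claimed splittings.

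An equivalent route, which I would mention, bypasses Corollary \ref{cor:Split} and instead splits the short exact sequences in the bottom rows of the two diagrams of Theorem \ref{teo:H2}. Over a field every short exact sequence of vector spaces splits, so the first diagram gives
\[
H^{2}(\mathcal{C},\D)\cong\frac{Z^{2}(\mathcal{N}_{0},\overline{\D})}{B^{2}(\mathcal{C},\D)\cap\mathcal{C}^{2,0}}\oplus\frac{\ker(\rho_{2})}{\mathcal{B}^{2}_{1}},
\]
and the bottom row of the second diagram further decomposes the summand $\frac{\ker(\rho_{2})}{\mathcal{B}^{2}_{1}}$ into $\frac{\ker(\varrho_{2})}{\mathcal{B}^{2}_{1}\cap\mathcal{C}^{1,1}}\oplus\frac{\mathcal{Z}^{2}_{2}}{B^{2}(\mathcal{C}^{0,\bullet},\D_{0,1})}$; applying the same splitting to the upper two rows produces the decompositions of $B^{2}$ and $Z^{2}$.

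There is essentially no genuine obstacle here, since the field hypothesis makes every short exact sequence split and the homological content is carried entirely by Theorem \ref{teo:H2} and Corollary \ref{cor:Split}. The only point requiring care is the bookkeeping at the middle bidegree, namely checking that the cocycle summand $\mathcal{Z}^{2}_{1}\cap\mathcal{C}^{1,1}$ is matched with $\ker(\varrho_{2})$ and not with $\ker(\rho_{2})=\mathcal{Z}^{2}_{1}$, together with confirming that the intersections collapse on the correct side at the two extremes, namely cocycles at $q=0$ and coboundaries at $q=2$.
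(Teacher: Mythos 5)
Your proposal is correct and matches the paper's (implicit) argument: Corollary \ref{cor:H2} is exactly Corollary \ref{cor:Split} at $k=2$, with the summands rewritten via the identifications $\mathcal{Z}^{2}_{0}\cap\mathcal{C}^{2,0}=Z^{2}(\mathcal{N}_{0},\overline\D)$ from \eqref{eq:ZC}, $\mathcal{B}^{2}_{2}\cap\mathcal{C}^{0,2}=B^{2}(\mathcal{C}^{0,\bullet},\D_{0,1})$ from \eqref{eq:Cobound}, $\pi_{0}=\operatorname{Id}$ at $q=0$, and $\ker(\varrho_{2})=\mathcal{Z}^{2}_{1}\cap\mathcal{C}^{1,1}$ from Lemma \ref{lemma:Rho}, which is precisely the content of Theorem \ref{teo:H2}. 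Your alternative route of splitting the exact rows of Theorem \ref{teo:H2} over a field is the same mechanism, so there is nothing further to add.
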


In a more explicit fashion, the modules appearing in Theorem \ref{teo:H2} are
\[
B^{2}(\mathcal{C},\D)\cap\mathcal{C}^{2,0} &= \{\D_{1,0}\alpha+\D_{2,-1}Y \mid \alpha\in\mathcal{C}^{1,0}, Y\in\mathcal{C}^{0,1}, \D_{0,1}\alpha+\D_{1,0}Y=0,\D_{0,1}Y=0\},\\
\mathcal{B}^{2}_{1}\cap\mathcal{C}^{1,1} &= \{\D_{1,0}Y+\D_{0,1}\alpha \mid \alpha\in\mathcal{C}^{1,0}, Y\in\mathcal{C}^{0,1}, \D_{0,1}Y=0\},\\ B^{2}(\mathcal{C}^{0,\bullet},\D_{0,1}) &= \{\D_{0,1}Y \mid Y\in\mathcal{C}^{0,1}\},\\
\mathcal{J}^{2} &= \{Q\in\mathcal{C}^{1,1} \mid \D_{0,1}Q=0, \exists\beta_{Q}\in\mathcal{C}^{2,0}:\D_{0,1}\beta_{Q}+\D_{1,0}Q=0\},\\
Z^{2}(\mathcal{N}_{0},\overline\D) &= \{\beta\in\mathcal{C}^{2,0} \mid \D_{1,0}\beta=0, \D_{0,1}\beta=0\},\\
\ker(\varrho_{2}) &= \{Q\in\mathcal{J}^{2} \mid \D_{2,-1}Q + \D_{1,0}\beta_{Q}\in B^{3}(\mathcal{N},\overline\D)\},\\
\mathcal{Z}^{2}_{2} &=
\left\{V\in\ker^{0,2}\D_{0,1}~\left|~\exists Q\in\mathcal{C}^{1,1},\beta\in\mathcal{C}^{2,0}:\begin{aligned}
&\D_{0,1}Q + \D_{1,0}V = 0,\\
&\D_{0,1}\beta+\D_{1,0}Q+\D_{2,-1}V \in B^3(\mathcal{N},\overline\D),\\
&\D_{1,0}\beta+\D_{2,-1}Q\in B^3(\mathcal{N},\overline\D).
\end{aligned}\right.
\right\}.
\]

\pagebreak

\paragraph{Third cohomology.} Finally, the following result gives a more explicit presentation of the $\mathcal{R}$-modules involved in the description of coboundaries, cocycles, and cohomology of degree 3.

\begin{theorem}\label{teo:H3}
We have the following commutative diagrams with exact rows and columns,
\[
\xymatrix@=1.4em{
&0\ar[d]&0\ar[d]&0\ar[d]\\
0\ar[r]&B^{3}(\mathcal{C},\D)\cap\mathcal{C}^{3,0}\ar@{^{(}->}[d]\ar@{^{(}->}[r]&B^{3}(\mathcal{C},\D)\ar@{^{(}->}[d]\ar[r]^{\pi_{1}}&\mathcal{B}^{3}_{1}\ar@{^{(}->}[d]\ar[r]&0\\
0\ar[r]&Z^{3}(\mathcal{N}_{0},\overline\D)\ar[d]\ar@{^{(}->}[r]&Z^{3}(\mathcal{C},\D)\ar[d]\ar[r]^{\pi_{1}}&\ker(\rho_{3})\ar[d]\ar[r]&0,\\
0\ar[r]&\frac{Z^{3}(\mathcal{N}_{0},\overline\D)}{B^{3}(\mathcal{C},\D)\cap\mathcal{C}^{3,0}}\ar[d]\ar@{->}[r]&H^{3}(\mathcal{C},\D)\ar[d]\ar@{->}[r]&\frac{\ker(\rho_{3})}{\mathcal{B}^{3}_{1}}\ar[d]\ar[r]&0\\
&0&0&0}
\]
\[
\xymatrix@=1.4em{
&0\ar[d]&0\ar[d]&0\ar[d]\\
0\ar[r]&\mathcal{B}^{3}_{1}\cap\mathcal{C}^{2,1}\ar@{^{(}->}[d]\ar@{^{(}->}[r]&\mathcal{B}^{3}_{1}\ar@{^{(}->}[d]\ar[r]^{\pi_{2}}&\mathcal{B}^{3}_{2}\ar@{^{(}->}[d]\ar[r]&0\phantom{.}\\
0\ar[r]&\ker(\varrho_{3})\ar[d]\ar@{^{(}->}[r]&\ker(\rho_{3})\ar[d]\ar[r]^{\pi_{2}}&\mathcal{Z}^{3}_{2}\ar[d]\ar[r]&0,\\
0\ar[r]&\frac{\ker(\varrho_{3})}{\mathcal{B}^{3}_{1}\cap\mathcal{C}^{2,1}}\ar[d]\ar@{->}[r]&\frac{\ker(\rho_{3})}{\mathcal{B}^{3}_{1}}\ar[d]\ar@{->}[r]&\frac{\mathcal{Z}^{3}_{2}}{\mathcal{B}^{3}_{2}}\ar[d]\ar[r]&0\phantom{.}\\
&0&0&0}
\]
\[
\xymatrix@=1.4em{
&0\ar[d]&0\ar[d]&0\ar[d]\\
0\ar[r]&\mathcal{B}^{3}_{2}\cap\mathcal{C}^{1,2}\ar@{^{(}->}[d]\ar@{^{(}->}[r]&\mathcal{B}^{3}_{2}\ar@{^{(}->}[d]\ar[r]^(0.3){\pi_{3}}&B^{3}(\mathcal{C}^{0,\bullet},\D_{0,1})\ar@{^{(}->}[d]\ar[r]&0\phantom{.}\\
0\ar[r]&\mathcal{Z}^{3}_{2}\cap\mathcal{C}^{1,2}\ar[d]\ar@{^{(}->}[r]&\mathcal{Z}^{3}_{2}\ar[d]\ar[r]^{\pi_{3}}&\mathcal{Z}^{3}_{3}\ar[d]\ar[r]&0.\\
0\ar[r]&\frac{\mathcal{B}^{3}_{2}\cap\mathcal{C}^{1,2}}{\mathcal{Z}^{3}_{2}\cap\mathcal{C}^{1,2}}\ar[d]\ar@{->}[r]&\frac{\mathcal{Z}^{3}_{2}}{\mathcal{B}^{3}_{2}}\ar[d]\ar@{->}[r]&\frac{\mathcal{Z}^{3}_{3}}{B^{3}(\mathcal{C}^{0,\bullet},\D_{0,1})}\ar[d]\ar[r]&0\phantom{.}\\
&0&0&0}
\]
\end{theorem}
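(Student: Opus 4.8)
The plan is to recognize the three displayed diagrams as the three diagrams furnished by Theorem \ref{teo:Diagram} in the case $k=3$, one for each value $q=0,1,2$ (equivalently $p=3,2,1$), and then to replace the generic pre-cocycle and pre-coboundary modules by their concrete descriptions. Since Theorem \ref{teo:Diagram} already guarantees that every row and column is exact and that each array commutes, no exactness or commutativity has to be re-established; the whole task reduces to a sequence of module identifications, exactly as in the proof of Theorem \ref{teo:H1}.

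First I would treat the case $q=0$ (first diagram). Because $\pi_{0}=\operatorname{Id}_{\mathcal{C}}$, we have $\mathcal{B}^{3}_{0}=B^{3}(\mathcal{C},\D)$ and $\mathcal{Z}^{3}_{0}=Z^{3}(\mathcal{C},\D)$, so the middle column of the $(k,q)=(3,0)$ diagram becomes $B^{3}(\mathcal{C},\D)\hookrightarrow Z^{3}(\mathcal{C},\D)\twoheadrightarrow H^{3}(\mathcal{C},\D)$. For the outer columns I would invoke the two identifications that simplify: \eqref{eq:ZC} gives $\mathcal{Z}^{3}_{0}\cap\mathcal{C}^{3,0}=Z^{3}(\mathcal{N}_{0},\overline\D)$, and Lemma \ref{lemma:Rho} gives $\mathcal{Z}^{3}_{1}=\ker(\rho_{3})$. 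The entry $\mathcal{B}^{3}_{0}\cap\mathcal{C}^{3,0}=B^{3}(\mathcal{C},\D)\cap\mathcal{C}^{3,0}$ is left as is, and $\mathcal{B}^{3}_{1}$ carries over unchanged; this yields the first diagram.

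Next I would handle $q=1$ and $q=2$. For $q=1$ the only new simplifications are $\mathcal{Z}^{3}_{1}=\ker(\rho_{3})$ (again Lemma \ref{lemma:Rho}) and $\mathcal{Z}^{3}_{1}\cap\mathcal{C}^{2,1}=\ker(\varrho_{3})$, the latter being precisely the identity $\ker(\varrho_{k})=\mathcal{Z}^{k}_{1}\cap\mathcal{C}^{k-1,1}$ coming from the definition $\varrho_{3}:=\rho_{3}|_{\mathcal{J}^{3}}$; all of $\mathcal{B}^{3}_{1}$, $\mathcal{B}^{3}_{1}\cap\mathcal{C}^{2,1}$, $\mathcal{B}^{3}_{2}$, and $\mathcal{Z}^{3}_{2}$ are kept in their generic form, producing the second diagram. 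For $q=2$ (third diagram, $p=1$) the one identification needed is $\mathcal{B}^{3}_{3}=B^{3}(\mathcal{C}^{0,\bullet},\D_{0,1})$, which follows from \eqref{eq:Cobound} together with the observation that $\mathcal{B}^{3}_{3}=\pi_{3}(B^{3}(\mathcal{C},\D))\subseteq\mathcal{C}^{0,3}$; the remaining entries $\mathcal{B}^{3}_{2}$, $\mathcal{B}^{3}_{2}\cap\mathcal{C}^{1,2}$, $\mathcal{Z}^{3}_{2}$, $\mathcal{Z}^{3}_{2}\cap\mathcal{C}^{1,2}$, and $\mathcal{Z}^{3}_{3}$ are copied verbatim from the general diagram.

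The step deserving the most care is the bookkeeping in the bottom row of the third diagram: one has to confirm that the left-hand cohomology term is the quotient $\tfrac{\mathcal{Z}^{3}_{2}\cap\mathcal{C}^{1,2}}{\mathcal{B}^{3}_{2}\cap\mathcal{C}^{1,2}}$ coming from the exact left column of Theorem \ref{teo:Diagram} (the inclusion $\mathcal{B}^{3}_{2}\cap\mathcal{C}^{1,2}\subseteq\mathcal{Z}^{3}_{2}\cap\mathcal{C}^{1,2}$ being inherited from $\mathcal{B}^{3}_{2}\subseteq\mathcal{Z}^{3}_{2}$), and that the right-hand term $\tfrac{\mathcal{Z}^{3}_{3}}{B^{3}(\mathcal{C}^{0,\bullet},\D_{0,1})}$ is well defined because $B^{3}(\mathcal{C}^{0,\bullet},\D_{0,1})=\mathcal{B}^{3}_{3}\subseteq\mathcal{Z}^{3}_{3}$. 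I expect no genuine difficulty beyond this matching, since every structural assertion is already supplied by Theorem \ref{teo:Diagram}; the content of Theorem \ref{teo:H3} is entirely the translation of its symbolic entries into the objects $Z^{3}(\mathcal{N}_{0},\overline\D)$, $\ker(\rho_{3})$, $\ker(\varrho_{3})$, and $B^{3}(\mathcal{C}^{0,\bullet},\D_{0,1})$.
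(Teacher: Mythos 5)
Your proposal is correct and is essentially the paper's own proof: the paper disposes of Theorem \ref{teo:H3} by declaring it analogous to Theorem \ref{teo:H1}, i.e.\ one specializes Theorem \ref{teo:Diagram} to $k=3$, $q=0,1,2$ and substitutes exactly the identifications you cite ($\pi_{0}=\operatorname{Id}_{\mathcal{C}}$, equations \eqref{eq:ZC} and \eqref{eq:Cobound}, Lemma \ref{lemma:Rho}, and $\ker(\varrho_{3})=\mathcal{Z}^{3}_{1}\cap\mathcal{C}^{2,1}$ from the definition of $\varrho_{3}$). Incidentally, your bottom-left entry $\tfrac{\mathcal{Z}^{3}_{2}\cap\mathcal{C}^{1,2}}{\mathcal{B}^{3}_{2}\cap\mathcal{C}^{1,2}}$ in the third diagram is the correct one; the fraction is printed upside down in the theorem's statement as given (compare Corollary \ref{cor:H3}).
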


We note that this result on the third cohomology of $(\mathcal{C},\D)$ involves the maps $\rho_{3}:\mathcal{A}^{3}\rightarrow H^{4}(\mathcal{N}_{0},\overline\D)$ and $\varrho_{3}:\mathcal{J}^{3}\rightarrow H^{4}(\mathcal{N}_{0},\overline\D)$, related to the fourth cohomology of $(\mathcal{N}_{0},\overline\D)$. Also, the submodule $\mathcal{Z}^{3}_{3}$ is related to the 4-coboundaries of $(\mathcal{N}_{1},\overline\D)$.

\begin{corollary}\label{cor:H3}
  In the case when $\mathcal{R}$ is a field, the coboundary, cocycle, and cohomology spaces of degree $3$ admit the following splittings as vector spaces:
  \begin{align*}
  &B^{3}(\mathcal{C},\D) \cong (B^{3}(\mathcal{C},\D)\cap\mathcal{C}^{3,0}) \oplus (\mathcal{B}^{3}_{1}\cap\mathcal{C}^{2,1}) \oplus(\mathcal{B}^{3}_{2}\cap\mathcal{C}^{1,2})\oplus B^{3}(\mathcal{C}^{0,\bullet},\D_{0,1}), \\
  &Z^{3}(\mathcal{C},\D)\cong Z^{3}(\mathcal{N}_{0},\overline\D) \oplus \ker(\varrho_{3}) \oplus (\mathcal{Z}^{3}_{2}\cap\mathcal{C}^{1,2}) \oplus \mathcal{Z}^{3}_{3}, \\
  &H^{3}(\mathcal{C},\D)\cong \frac{Z^{3}(\mathcal{N}_{0},\overline\D)}{B^{3}(\mathcal{C},\D)\cap\mathcal{C}^{3,0}} \oplus \frac{\ker(\varrho_{3})}{\mathcal{B}^{3}_{1}\cap\mathcal{C}^{2,1}} \oplus \frac{\mathcal{Z}^{3}_{2}\cap\mathcal{C}^{1,2}}{\mathcal{B}^{3}_{2}\cap\mathcal{C}^{1,2}} \oplus \frac{\mathcal{Z}^{3}_{3}}{B^{3}(\mathcal{C}^{0,\bullet},\D_{0,1})}.
  \end{align*}
\end{corollary}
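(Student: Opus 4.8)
The plan is to deduce Corollary \ref{cor:H3} directly from the general field-coefficient splittings of Corollary \ref{cor:Split}, specialized to $k=3$, by rewriting each homogeneous summand in terms of the explicit $\mathcal{R}$-modules identified in the low-degree section. Since $\mathcal{R}$ is a field, Corollary \ref{cor:Split} with $k=3$ already furnishes the three direct-sum decompositions $B^3(\mathcal{C},\D)\cong\bigoplus_{q=0}^{3}\mathcal{B}^3_q\cap\mathcal{C}^{3-q,q}$, $Z^3(\mathcal{C},\D)\cong\bigoplus_{q=0}^{3}\mathcal{Z}^3_q\cap\mathcal{C}^{3-q,q}$, and $H^3(\mathcal{C},\D)\cong\bigoplus_{q=0}^{3}\tfrac{\mathcal{Z}^3_q\cap\mathcal{C}^{3-q,q}}{\mathcal{B}^3_q\cap\mathcal{C}^{3-q,q}}$, where the summands run over the four bidegrees $(3,0)$, $(2,1)$, $(1,2)$, $(0,3)$. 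So the entire task reduces to matching these eight homogeneous pieces with the modules named in the statement.

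For the two extreme indices I would invoke identifications already recorded in the paper. At $q=0$, since $\pi_0=\operatorname{Id}_{\mathcal{C}}$ we have $\mathcal{B}^3_0=B^3(\mathcal{C},\D)$, whence $\mathcal{B}^3_0\cap\mathcal{C}^{3,0}=B^3(\mathcal{C},\D)\cap\mathcal{C}^{3,0}$, while equation \eqref{eq:ZC} gives $\mathcal{Z}^3_0\cap\mathcal{C}^{3,0}=Z^3(\mathcal{N}_0,\overline\D)$. At $q=3$, equation \eqref{eq:Cobound} yields $\mathcal{B}^3_3\cap\mathcal{C}^{0,3}=B^3(\mathcal{C}^{0,\bullet},\D_{0,1})$, and since $\mathcal{Z}^3_3\subseteq G^3\mathcal{C}^3=\mathcal{C}^{0,3}$ the intersection is redundant, so $\mathcal{Z}^3_3\cap\mathcal{C}^{0,3}=\mathcal{Z}^3_3$.

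For the two intermediate indices the cocycle pieces are translated through the $\varrho$-map: the identity $\ker(\varrho_k)=\mathcal{Z}^k_1\cap\mathcal{C}^{k-1,1}$ established right after Lemma \ref{lemma:Rho} gives, at $k=3$, $\mathcal{Z}^3_1\cap\mathcal{C}^{2,1}=\ker(\varrho_3)$, whereas the $q=2$ cocycle piece $\mathcal{Z}^3_2\cap\mathcal{C}^{1,2}$ is already in its final form; the coboundary pieces $\mathcal{B}^3_1\cap\mathcal{C}^{2,1}$ and $\mathcal{B}^3_2\cap\mathcal{C}^{1,2}$ are carried over unchanged. Substituting these four identifications into the three general splittings yields verbatim the decompositions of $B^3(\mathcal{C},\D)$ and $Z^3(\mathcal{C},\D)$ claimed in the corollary, and the $H^3(\mathcal{C},\D)$ decomposition then follows by forming the term-wise quotients of the matched cocycle and coboundary summands.

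There is no genuine obstacle here beyond careful bookkeeping: the field hypothesis enters only through Corollary \ref{cor:Split}, whose non-canonical splittings are precisely the vector-space decompositions we need, and everything else is a dictionary between the generic homogeneous pieces $\mathcal{Z}^3_q\cap\mathcal{C}^{3-q,q}$, $\mathcal{B}^3_q\cap\mathcal{C}^{3-q,q}$ and their explicit low-degree descriptions. The single point deserving a moment's attention is the $q=3$ cocycle term, where one must note that a total-degree-$3$ element of $G^3\mathcal{C}$ has only its bidegree-$(0,3)$ component, so that $\mathcal{Z}^3_3\cap\mathcal{C}^{0,3}=\mathcal{Z}^3_3$ and no information is lost in dropping the intersection.
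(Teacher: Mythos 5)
Your proposal is correct and follows essentially the same route as the paper: the paper obtains Corollary \ref{cor:H3} by specializing the field-case splittings of Corollary \ref{cor:Split} (equivalently, the split short exact sequences of Theorem \ref{teo:H3}) to $k=3$ and translating the homogeneous pieces via \eqref{eq:ZC}, \eqref{eq:Cobound}, the identity $\ker(\varrho_{k})=\mathcal{Z}^{k}_{1}\cap\mathcal{C}^{k-1,1}$ from Lemma \ref{lemma:Rho}, and the observation that $\pi_{0}=\operatorname{Id}_{\mathcal{C}}$, exactly the dictionary you assemble. Your explicit note that $\mathcal{Z}^{3}_{3}\subseteq\mathcal{C}^{0,3}$, so the intersection may be dropped, is the right bookkeeping detail and completes the identification.
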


Each of the terms appearing in the splittings of Corollary \ref{cor:H3} are given as follows:
\[
& B^{3}(\mathcal{C},\D)\cap\mathcal{C}^{3,0} = \left\{\D_{1,0}\beta+\D_{2,-1}Q ~\left|~ 
\beta\in\mathcal{C}^{2,0},
Q\in\mathcal{C}^{1,1},
\exists V\in\mathcal{C}^{0,2}:
\begin{aligned}
  &\D_{0,1}\beta+\D_{1,0}Q+\D_{2,-1}V=0,\\
  &\D_{0,1}Q+\D_{1,0}V=0,\\
  &\D_{0,1}V=0.
\end{aligned}\right.\right\},\\
&\mathcal{B}^{3}_{1}\cap\mathcal{C}^{2,1}=\{\D_{0,1}\beta+\D_{1,0}Q+\D_{2,-1}V\mid\beta\in\mathcal{C}^{2,0},Q\in\mathcal{C}^{1,1},V\in\mathcal{C}^{0,2},~\D_{0,1}Q+\D_{1,0}V=0,\D_{0,1}V=0\},\\
&\mathcal{B}^{3}_{2}\cap\mathcal{C}^{1,2} = \{\D_{0,1}Q+\D_{1,0}V=0 \mid Q\in\mathcal{C}^{1,1},V\in\mathcal{C}^{0,2},~\D_{0,1}V=0\},\\
& B^{3}(\mathcal{C}^{0,\bullet},\D_{0,1}) = \{\D_{0,1}V \mid V\in\mathcal{C}^{0,2}\},\\
&\mathcal{J}^{3} = \{R\in\mathcal{C}^{2,1} \mid \D_{0,1}R=0, \exists\varphi_{R}\in\mathcal{C}^{3,0}:\D_{0,1}\varphi_{R}+\D_{1,0}R=0\},\\
&Z^{3}(\mathcal{N}_{0},\overline\D) = \{\varphi\in\mathcal{C}^{3,0} \mid \D_{1,0}\varphi=0, \D_{0,1}\varphi=0\},\\
&\ker(\varrho_{3}) = \{R\in\mathcal{J}^{3} \mid \D_{2,-1}R+\D_{1,0}\varphi_{R}\in B^{3}(\mathcal{N}_{0},\overline\D)\},\\
&\mathcal{Z}^{3}_{2}\cap\mathcal{C}^{1,2} = \left\{S\in\mathcal{C}^{1,2} ~\left|~ \exists R\in\mathcal{C}^{2,1},\varphi\in\mathcal{C}^{3,0}:\begin{aligned}
&\D_{0,1}S=0,  \qquad \D_{0,1}R+\D_{1,0}S=0,\\
&\D_{0,1}\varphi+\D_{1,0}R+\D_{2,-1}S\in B^{4}(\mathcal{N}_{0},\overline\D),\\
&\D_{1,0}\varphi+\D_{2,-1}R\in B^{4}(\mathcal{N}_{0},\overline\D).
\end{aligned}\right.\right\},\\
&\mathcal{Z}^{3}_{3} = \left\{W\in\ker^{0,3}(\D_{0,1}) ~\left|~ \exists S\in\mathcal{C}^{1,2},R\in\mathcal{C}^{2,1},\varphi\in\mathcal{C}^{3,0}:\begin{aligned}
&\D_{0,1}S+\D_{1,0}W=0,\\
&\D_{0,1}R+\D_{1,0}S+\D_{2,-1}W\in B^{4}(\mathcal{N}_{0},\overline\D),\\
&\D_{0,1}\varphi+\D_{1,0}R+\D_{2,-1}S\in B^{4}(\mathcal{N}_{0},\overline\D),\\
&\D_{1,0}\varphi+\D_{2,-1}R\in B^{4}(\mathcal{N}_{0},\overline\D).
\end{aligned}\right.\right\}.
\]

The proofs of Theorems \ref{teo:H2}, and \ref{teo:H3} are analogous to the proof of Theorem \ref{teo:H1}.

\section{Particular cases}\label{sec:BigradedParticular}

In this part we consider some particular cases regarding the bigraded cochain complex $(\mathcal{C},\D)$,
\[
\D = \D_{0,1} + \D_{1,0} + \D_{2,-1}.
\]

\paragraph{The case } $\D_{2,-1}=0$\textbf{.} In this part, we assume that $\D_{2,-1}=0$, which corresponds to the well-known case of a \emph{double complex}. Namely, $(\mathcal{C},\D)$ is a bigraded cochain complex, such that the bigraded decomposition of the coboundary operator is
\[
\D = \D_{0,1} + \D_{1,0}.
\]
The coboundary equations \eqref{eq:Cob1}-\eqref{eq:Cob5} read in this case
\[
\D_{0,1}^{2} = 0, && \D_{0,1}\D_{1,0}+\D_{1,0}\D_{0,1} = 0, && \D_{1,0}^{2}=0,
\]
which means that the bigraded components $\D_{0,1}$ and $\D_{1,0}$ are coboundary operators which commute with each other in the graded sense.

The case of the double complex is a standard topic in the literature, since it naturally arises both from algebraic and geometric contexts \cite[Chapter XI, Section 6]{Maclane}, \cite[Section 2.4]{McCleary}, and has several applications \cite[Chapter II]{BottTu}. However, the description of its cohomology is limited to explain that the natural filtration
\[
F^{p}\mathcal{C}^{\bullet} := \bigoplus_{\substack{i,j\in\mathbb{Z}\\i\geq p}}\mathcal{C}^{i,j}
\]
induces a spectral sequence which converges to the cohomology, and whose second page is explicitly described in terms of the double complex, namely, $E^{p,q}_{2} = H^{p}(H^{q}(\mathcal{C},\D_{0,1}),\D_{1,0})$. For several applications discussed in the literature, the computation of the second page of the spectral sequence is sufficient to describe the cohomology. In this sense, we have not found a general scheme for the computation of the cohomology of a double complex.

Theorems \ref{teo:H1}, \ref{teo:H2}, and \ref{teo:H3} provide an explicit description of the low-degree cohomology of a bigraded cochain complex which, of course, also holds for the double complex. We remark that in this case, the null subcomplex is simply $\mathcal{N}=\ker\D_{0,1}$, so the cocycles and coboundaries of $(\mathcal{N},\overline\D)$ are
\[
Z(\mathcal{N},\overline\D)=\ker\D_{0,1}\cap\ker\D_{1,0},  && \text{and} &&  B(\mathcal{N},\overline\D)=\D_{1,0}(\ker\D_{0,1}).
\]
Furthermore, in the description of the cohomology of degree 1 provided by Theorem \ref{teo:H1}, we have
\[
\ker(\rho_{1}) = \{Y\in\mathcal{C}^{0,1} \mid \D_{0,1}Y = 0, ~\exists \alpha_{Y}\in\mathcal{C}^{1,0}~:~ \D_{1,0}Y+\D_{0,1}\alpha_{Y}=0, \D_{1,0}\alpha_{Y}\in B^{1}(\mathcal{N}_{0},\overline\D)\}.
\]

On the other hand, the terms which simplify in the description of the cohomology of degree 2 of a double complex are
\[
B^{2}(\mathcal{C},\D)\cap\mathcal{C}^{2,0} &= \{\D_{1,0}\alpha \mid \alpha\in\mathcal{C}^{1,0}, ~\exists Y\in\mathcal{C}^{0,1} ~:~ \D_{0,1}\alpha+\D_{1,0}Y=0,\D_{0,1}Y=0\},\\
\ker(\varrho_{2}) &= \{Q\in\mathcal{C}^{1,1} \mid \D_{0,1}Q=0, \exists\beta_{Q}\in\mathcal{C}^{2,0}:\D_{0,1}\beta_{Q}+\D_{1,0}Q=0, \D_{1,0}\beta_{Q}\in B^{3}(\mathcal{N},\overline\D)\},\\
\mathcal{Z}^{2}_{2} &=
\left\{V\in\ker^{0,2}\D_{0,1}~\left|~\exists Q\in\mathcal{C}^{1,1},\beta\in\mathcal{C}^{2,0}:\begin{aligned}
&\D_{0,1}Q + \D_{1,0}V = 0,\\
&\D_{0,1}\beta+\D_{1,0}Q\in B^3(\mathcal{N},\overline\D),\\
&\D_{1,0}\beta\in B^3(\mathcal{N},\overline\D).
\end{aligned}\right.
\right\}.
\]

In a similar fashion, the terms appearing in the description of the cohomology of degree three that simplify in the case of the double complex are $\mathcal{B}^{3}_{1}\cap\mathcal{C}^{2,1}$, $\mathcal{B}^{3}_{2}\cap\mathcal{C}^{1,2}$, $\mathcal{Z}^{3}_{2}\cap\mathcal{C}^{1,2}$, and $\mathcal{Z}^{3}_{3}$.

\paragraph{The case } $\D_{0,1}=0$\textbf{.} We now consider a cochain complex $(\mathcal{C}^{\bullet},\D)$ endowed with a compatible bigrading such that the decomposition of the coboundary operator is of the form $\D = \D_{1,0} + \D_{2,-1}$. This can be regarded as a particular case of our general scheme in which the operator of type $(0,1)$ vanishes, $\D_{0,1}=0$. In this case, the coboundary property $\D^{2}=0$ is equivalent to
\[
\D_{1,0}^{2} = 0, && \D_{1,0}\D_{2,-1}+\D_{2,-1}\D_{1,0} = 0, && \D_{2,-1}^{2}=0,
\]
which means that the bigraded components $\D_{1,0}$ and $\D_{2,-1}$ are graded commutative coboundary operators. Moreover, the null subcomplex is $\mathcal{N}=\ker\D_{2,-1}$. In particular, $(\mathcal{N}_{0},\overline{\D})=(\mathcal{C}^{\bullet,0},\D_{1,0})$,
\[
Z^{p}(\mathcal{N}_{0},\overline\D)=\ker(\D_{1,0}:\mathcal{C}^{p,0}\rightarrow\mathcal{C}^{p+1,0}),  && \text{and} &&  B^{p}(\mathcal{N}_{0},\overline\D)=\D_{1,0}(\mathcal{C}^{p-1,0}), &&\forall p\geq 0.
\]

It is well known that this class of cochain complexes arise in the context of regular Poisson manifolds. In fact, the choice of a subbundle normal to the symplectic foliation of a regular Poisson manifold induces a bigrading of the Lichnerowicz-Poisson complex such that the coboundary operator is of this kind. Moreover, based on this fact, and motivated by the results in \cite{VK-88}, a recursive scheme for the computation of the cohomology of regular Poisson manifolds is provided in \cite[Section 2]{Va-90}. Such recursive scheme is similar to the one we have presented in Section \ref{sec:BigradedRecursive}, and leads to a description of the Poisson cohomology in terms of short exact sequences that coincide with the bottom rows of the diagrams of Proposition \ref{prop:Diagram}.

Finally, we remark that this class of cochain complexes also arise in the literature in the more general context of Poisson foliations \cite[Proposition 2.2]{Va-04}, \cite[Lemma 4.1]{Va-05}, which are Poisson structures such that the symplectic foliation admits an outer regularization.

The terms which appear in the description of the first cohomology which simplify in this case are
\begin{align*}
B^{1}(\mathcal{N}_{0},\overline\D) &= \D_{1,0}(\mathcal{C}^{0,0}),\\
B^{1}(\mathcal{C}^{0,\bullet},\D_{0,1}) &= \{0\},\\
\mathcal{A}^{1} &= \{Y\in\mathcal{C}^{0,1}\mid\D_{1,0}Y=0\},\\
Z^{1}(\mathcal{N}_{0},\overline\D) &= \{\alpha\in\mathcal{C}^{1,0} \mid \D_{1,0}\alpha=0\},\\
\ker(\rho_{1}) &= \{Y\in\mathcal{A}^{1} \mid \D_{2,-1}Y \in \D_{1,0}(\mathcal{C}^{0,0})\}.
\end{align*}

In the description of the cohomology of degree two, the terms simplify to
\[
B^{2}(\mathcal{C},\D)\cap\mathcal{C}^{2,0} &= \{\D_{1,0}\alpha+\D_{2,-1}Y \mid \alpha\in\mathcal{C}^{1,0}, Y\in\mathcal{C}^{0,1}, \D_{1,0}Y=0\},\\
\mathcal{B}^{2}_{1}\cap\mathcal{C}^{1,1} &= \{\D_{1,0}Y \mid Y\in\mathcal{C}^{0,1}\},\\ B^{2}(\mathcal{C}^{0,\bullet},\D_{0,1}) &= \{0\},\\
\mathcal{J}^{2} &= \{Q\in\mathcal{C}^{1,1} \mid \D_{1,0}Q=0\},\\
Z^{2}(\mathcal{N}_{0},\overline\D) &= \{\beta\in\mathcal{C}^{2,0} \mid \D_{1,0}\beta=0\},\\
\ker(\varrho_{2}) &= \{Q\in\mathcal{J}^{2} \mid \D_{2,-1}Q + \D_{1,0}\beta_{Q}\in B^{3}(\mathcal{N},\overline\D)\},\\
\mathcal{Z}^{2}_{2} &=
\left\{V\in\mathcal{C}^{0,2}~\left|~\exists Q\in\mathcal{C}^{1,1},\beta\in\mathcal{C}^{2,0}:\begin{aligned}
&\D_{1,0}V = 0,\\
&\D_{1,0}Q+\D_{2,-1}V \in B^3(\mathcal{N},\overline\D),\\
&\D_{1,0}\beta+\D_{2,-1}Q\in B^3(\mathcal{N},\overline\D).
\end{aligned}\right.
\right\}.
\]

In a similar fashion, most of the terms appearing in the description of the cohomology of degree three simplify.

\section{Geometric Applications}

\paragraph{An example.} Let $(M,\omega)$ be a presymplectic manifold and $(N,\Psi)$ the Poisson manifold given by $N=\mathbb{R}^{2}_{y}$, $\Psi=\|y\|^{2}\tfrac{\partial}{\partial y_{1}}\wedge\tfrac{\partial}{\partial y_{2}}$. Consider the product Dirac structure $D$ on $M\times N$. Then, $M\times\{0\}$ is a presymplectic leaf of $D$ and we can think of $M\times N\overset{\operatorname{pr}_{M}}{\rightarrow}M$ as a coupling neighborhood. More precisely, the vertical distribution is $\mathbb{V}:=\ker(\operatorname{pr}_{M})_{*}$, and the associated geometric data $(\gamma,\sigma,P)$ consists of the flat connection $\gamma:=(\operatorname{pr}_{N})_{*}$, given by the differential of the projection $\operatorname{pr}_{N}:M\times N\rightarrow N$; the pullback $\sigma:=\operatorname{pr}_{N}^{*}\omega$ of the presymplectic structure on $M$; and the unique vertical Poisson bivector field $P$ on $M\times N$ which is $\operatorname{pr}_{N}$-related to $\Psi$, $P=\|y\|^{2}\tfrac{\partial}{\partial y_{1}}\wedge\tfrac{\partial}{\partial y_{2}}$ on $M\times N$.

First we note that $(N,\Psi)$ has two symplectic leaves: the origin $(0,0)$, which is zero-dimensional, and the complement $N_{\mathrm{reg}}:=\mathbb{R}^{2}-\{(0,0)\}$. Then, each Casimir funcion of $(N,\Psi)$ is constant, $\operatorname{Casim}(N,\Psi)\cong\mathbb{R}$. This implies that $\operatorname{Casim}(M\times N,P)=\operatorname{pr}_{M}^{*}C^{\infty}(M)$ and
\[
H^{0}(M\times N,D) \cong H^{0}_{\mathrm{dR}}(M).
\]
Observe that in this case, the de Rham - Casimir complex $(\mathcal{N}^{\bullet},\overline\D)$ is isomorphic to the de Rham complex $(\Gamma(\wedge^{\bullet}T^{*}M),\operatorname{d})$ of $M$. In particular, $H^{1}(\mathcal{N}^{\bullet},\overline\D)\cong H^{1}_{\mathrm{dR}}(M)$. We now proceed to describe $\ker(\rho_{1}:\mathcal{I}^{1}\rightarrow H^{2}(\mathcal{N}^{\bullet},\overline\D))$. Let $Y=Y_{1}\tfrac{\partial}{\partial y_{1}}+Y_{2}\tfrac{\partial}{\partial y_{2}}\in\Gamma(\mathbb{V})$ be a vertical vector field. Then, $\operatorname{L}_{Y}P=0$ if and only if
\begin{equation}\label{eq:Ej1-Poiss}
  y_{1}Y_{1}+y_{2}Y_{2} = \tfrac{1}{2}\|y\|^{2}\operatorname{div}^{y}(Y).
\end{equation}
Here, $\operatorname{div}^{y}(Y):=\tfrac{\partial Y_{1}}{\partial y_{1}}+\tfrac{\partial Y_{2}}{\partial y_{2}}$ denotes the divergence of $Y$ with respect to the fiber-wise volume form $\operatorname{d}y_{1}\wedge\operatorname{d}y_{2}$. In particular, the fiber-wise Euler and modular vector fields
\begin{equation}\label{eq:Ej1-EulerMod}
Z_{1}:=y_{1}\tfrac{\partial}{\partial y_{1}}+y_{2}\tfrac{\partial}{\partial y_{2}} \quad\text{and}\quad Z_{2}:=-y_{2}\tfrac{\partial}{\partial y_{1}}+y_{1}\tfrac{\partial}{\partial y_{2}}
\end{equation}
are Poisson vector fields of $P$, respectively.

\begin{proposition}\label{prop:Ej1}
  Consider the Poisson manifold $(M\times N,P)$ given as in above. Let $Z_{1},Z_{2}$ be defined as in \eqref{eq:Ej1-EulerMod}, and $Y\in\Gamma(\mathbb{V})$. Then, $Y\in\operatorname{Poiss}(M\times N,P)$ if and only if $Y=a_{1}Z_{1}+a_{2}Z_{2}$ for unique $a_{1},a_{2}\in C^{\infty}(M)$ satisfying $L_{Z_{1}}a_{1}+L_{Z_{2}}a_{2}=0$. Additionally, $Y$ is Hamiltonian if and only if $a_{1}$ and $a_{2}$ vanish along the zero section $M\times\{0\}$. In this case, a Hamiltonian is given by $h(x,y):=\int_{0}^{\infty}a_{2}(x,e^{-t}y)\operatorname{d}t$. Hence, the first vertical Poisson cohomology of the Poisson bundle $(M\times N\overset{\operatorname{pr}_{M}}{\rightarrow}M,P)$ is
  \[
  H^{1}(M\times N,\mathbb{V},P)=C^{\infty}(M).[Z_{1}]\oplus C^{\infty}(M).[Z_{2}].
  \]
\end{proposition}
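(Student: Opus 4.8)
The plan is to exploit the factorization $P = Z_1\wedge Z_2$ together with the commutation relation $[Z_1,Z_2]=0$, both of which follow from \eqref{eq:Ej1-EulerMod} by a one-line computation (in polar coordinates $Z_1 = r\,\D_r$, $Z_2 = \D_\theta$, $P = r\,\D_r\wedge\D_\theta$). For a vertical field $Y = a_1 Z_1 + a_2 Z_2$ with smooth coefficients, expanding the Schouten bracket and using $[Z_1,Z_2]=0$ yields the clean identity $L_Y P = [Y,P] = -(L_{Z_1}a_1 + L_{Z_2}a_2)\,P$. Since $P$ is nonvanishing on $M\times N_{\mathrm{reg}}$, this immediately shows that such a $Y$ is Poisson if and only if $L_{Z_1}a_1 + L_{Z_2}a_2 = 0$, settling the ``if'' direction and the stated constraint on the coefficients.

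To obtain the converse—that every vertical Poisson field decomposes smoothly in the frame $(Z_1,Z_2)$—I would first set $a_1 := \tfrac12\operatorname{div}^y Y$, which is smooth on all of $M\times N$, and observe that \eqref{eq:Ej1-Poiss} reads $y_1 Y_1 + y_2 Y_2 = a_1\|y\|^2$. The residual field $\tilde Y := Y - a_1 Z_1$ is then smooth and satisfies $y_1\tilde Y_1 + y_2\tilde Y_2 \equiv 0$, i.e.\ $\tilde Y$ is Euclidean-orthogonal to the Euler field everywhere. The decomposition is completed by the algebraic fact that a smooth planar field annihilated by $(y_1,y_2)$ is a smooth multiple of the angular field $Z_2$; this produces $a_2$ with $\tilde Y = a_2 Z_2$, and uniqueness follows because $(Z_1,Z_2)$ is a frame on the dense set $N_{\mathrm{reg}}$. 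I expect this last point—the smoothness of $a_2$ across the singular point $y=0$, equivalently that the relations among the coordinate functions $y_1,y_2$ in $C^\infty$ are generated by the Koszul relation—to be the main obstacle; it is a standard smooth division/syzygy fact (Hadamard's lemma applied to $\tilde Y$, which vanishes at the origin), and I would isolate it as a short lemma rather than reprove it.

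For the Hamiltonian characterization I would use $X_h = (L_{Z_1}h)Z_2 - (L_{Z_2}h)Z_1$, the image of $dh$ under $P = Z_1\wedge Z_2$, so that $Y$ is Hamiltonian exactly when $a_1 = -L_{Z_2}h$ and $a_2 = L_{Z_1}h$ for some $h$. The forward implication is immediate: $Z_1$ and $Z_2$ vanish along $M\times\{0\}$, hence so do $a_1,a_2$. For the reverse, assuming $a_1,a_2$ vanish on the zero section, I would verify that $h(x,y):=\int_0^\infty a_2(x,e^{-t}y)\,\D t$ is the required primitive; convergence and smoothness follow from $a_2(x,0)=0$ via Hadamard (giving $|a_2(x,e^{-t}y)|\le C e^{-t}|y|$ with matching bounds for differentiation under the integral). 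Writing $e^{-t}y$ as the flow of $-Z_1$, a direct differentiation gives $L_{Z_1}h = a_2$, while $L_{Z_2}h = \int_0^\infty (L_{Z_2}a_2)(x,e^{-t}y)\,\D t = -\int_0^\infty (L_{Z_1}a_1)(x,e^{-t}y)\,\D t = \int_0^\infty \tfrac{d}{dt}a_1(x,e^{-t}y)\,\D t = -a_1(x,y)$, where the second equality is the Poisson relation and the last uses $a_1(x,0)=0$. Hence $X_h = a_1 Z_1 + a_2 Z_2 = Y$.

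Finally, for the cohomology I would package the first two parts through the linear map $\Phi:\operatorname{Poiss}(M\times N,P)\to C^\infty(M)\oplus C^\infty(M)$, $\Phi(Y) = \big(\tfrac12\operatorname{div}^y Y\big|_{y=0},\ \tfrac12(\D_{y_1}Y_2-\D_{y_2}Y_1)\big|_{y=0}\big)$, which by the first part equals $(a_1|_{0},a_2|_{0})$. Surjectivity is witnessed by the fiberwise-constant fields $b_1 Z_1 + b_2 Z_2$ with $b_1,b_2\in C^\infty(M)$, which are Poisson (the constraint holds automatically since $Z_1,Z_2$ are vertical) and satisfy $\Phi(b_1 Z_1 + b_2 Z_2)=(b_1,b_2)$. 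By the second part, $\ker\Phi$ is exactly the space of Hamiltonian fields, i.e.\ the coboundaries of the vertical Poisson complex. Passing to the quotient then gives $H^1(M\times N,\mathbb{V},P)\cong C^\infty(M)\oplus C^\infty(M)$, with the two summands generated by $[Z_1]$ and $[Z_2]$, which is the asserted formula.
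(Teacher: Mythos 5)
Your proposal is correct and takes essentially the same route as the paper: the same frame decomposition $Y=a_{1}Z_{1}+a_{2}Z_{2}$ with $a_{1}=\tfrac{1}{2}\operatorname{div}^{y}(Y)$ and a Hadamard-type smooth division to produce $a_{2}$ (note the division really uses that the relevant function vanishes along a whole line --- the paper divides $Y_{2}-a_{1}y_{2}$ by $y_{1}$ after noting it vanishes on $\{y_{1}=0\}$; vanishing of $\widetilde{Y}$ at the origin alone would not suffice, though the syzygy fact you invoke is the correct one), the same Hamiltonian criterion, and the same integral primitive $h(x,y)=\int_{0}^{\infty}a_{2}(x,e^{-t}y)\operatorname{d}t$. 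Your variations are minor but clean: you derive the Poisson condition from the identity $\operatorname{L}_{Y}P=-(\operatorname{L}_{Z_{1}}a_{1}+\operatorname{L}_{Z_{2}}a_{2})P$ using $P=Z_{1}\wedge Z_{2}$ and $[Z_{1},Z_{2}]=0$ rather than from \eqref{eq:Ej1-Poiss}, you verify $\operatorname{L}_{Z_{2}}h=-a_{1}$ by direct integration along the flow of $Z_{1}$ where the paper instead shows $\operatorname{L}_{Z_{1}}(a_{1}+\operatorname{L}_{Z_{2}}h)=0$ and uses that Euler-invariant functions are fiberwise constant, and you make explicit the surjectivity of $Y\mapsto(a_{1}|_{M\times\{0\}},a_{2}|_{M\times\{0\}})$ onto $C^{\infty}(M)\oplus C^{\infty}(M)$, a step the paper leaves implicit in asserting the final formula for $H^{1}(M\times N,\mathbb{V},P)$.
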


\begin{proof}
Observe from \eqref{eq:Ej1-Poiss} that, for smooth functions $a_{1},a_{2}\in C^{\infty}(M\times N)$,
\begin{equation}\label{eq:Ej1-a1a2}
  Y=a_{1}Z_{1}+a_{2}Z_{2}
\end{equation}
is an infinitesimal Poisson automorphism of $P$ if and only if $\operatorname{L}_{Z_{1}}a_{1}+\operatorname{L}_{Z_{2}}a_{2}=0$. We claim that every vertical infinitesimal Poisson automorphism of $P$ is of this form. Indeed, fix $Y\in\operatorname{Poiss}(M\times N,P)\cap\Gamma(\mathbb{V})$. Since $Z_{1},Z_{2}$ are linearly independent on $M\times N_{\mathrm{reg}}$, there exist $b_{1},b_{2}\in C^{\infty}(M\times N_{\mathrm{reg}})$ such that $Y|_{M\times N_{\mathrm{reg}}}=b_{1}Z_{1}+b_{2}Z_{2}$. Explicitly, $b_{1}:=\tfrac{1}{\|y\|^{2}}(y_{1}Y_{1}+y_{2}Y_{2})$ and $b_{2}:=\tfrac{1}{\|y\|^{2}}(-y_{2}Y_{1}+y_{1}Y_{2})$. We just need to show that $b_{1}$ and $b_{2}$ can be extended to some smooth functions $a_{1}$ and $a_{2}$ on $M\times N$. By \eqref{eq:Ej1-Poiss}, $b_{1}$ can be extended to the smooth function $a_{1}:=\tfrac{1}{2}\operatorname{div}^{y}(Y)$ on $M\times N$. Moreover, $Y_{2} - a_{1}y_{2}$ is a smooth function on $M\times N$ such that $Y_{2}-a_{1}y_{2}|_{M\times N_{\mathrm{reg}}} = b_{2}y_{1}$. This implies that $Y_{2}-a_{1}y_{2}$ vanishes along the level set $y_{1}=0$ of $M\times N$. Hence, there exists $a_{2}\in C^{\infty}(M\times N)$ such that $Y_{2}-a_{1}y_{2} = a_{2}y_{1}$. Hence, $a_{2}$ is a smooth function on $M\times N$ whose restriction to $M\times N_{\mathrm{reg}}$ is $b_{2}$. Finally, since such extensions are clearly unique, we conclude that every Poisson vector field $Y$ admits a unique representation of the form \eqref{eq:Ej1-a1a2}, with $a_{1},a_{2}\in C^{\infty}(M\times N)$ satisfying $\operatorname{L}_{Z_{1}}a_{1}+\operatorname{L}_{Z_{2}}a_{2}=0$.

Now, pick a Hamiltonian vector field $Y=P^{\sharp}\operatorname{d}h$, $h\in C^{\infty}(M\times N)$. By straightforward computations, the smooth functions $a_{1}$ and $a_{2}$ in \eqref{eq:Ej1-a1a2} are given in this case by
\begin{equation}\label{eq:Ej1-Ham}
  a_{1} = -\operatorname{L}_{Z_{2}}h \quad\text{and}\quad a_{2} = \operatorname{L}_{Z_{1}}h.
\end{equation}
In particular, $a_{1}$ and $a_{2}$ vanish on the symplectic leaf $M\times\{0\}$. We now see that the converse is also true: If $Y=a_{1}Z_{1}+a_{2}Z_{2}\in\operatorname{Poiss}(M\times N,P)\cap\Gamma(\mathbb{V})$ is such that $a_{1}(x,0)=a_{2}(x,0)=0$, then $Y\in\operatorname{Ham}(M\times N,P)$. To see this, pick $Y=a_{1}Z_{1}+a_{2}Z_{2}$ such that $\operatorname{L}_{Z_{1}}a_{1}+\operatorname{L}_{Z_{2}}a_{2}=0$ with $a_{1},a_{2}$ vanishing on $M\times\{0\}$. Define $h:M\times N\rightarrow\mathbb{R}$ by
\begin{equation}\label{eq:Ej1-hInt}
  h(x,y):=\int_{0}^{\infty}a_{2}(x,e^{-t}y)\operatorname{d}t.
\end{equation}
Then, $h\in C^{\infty}(M\times N)$ and clearly satisfies $\operatorname{L}_{Z_{1}}h = a_{2}$. Moreover,
\[
\operatorname{L}_{Z_{1}}a_{1} = -\operatorname{L}_{Z_{2}}a_{2} = -\operatorname{L}_{Z_{2}}\operatorname{L}_{Z_{1}}h = -\operatorname{L}_{Z_{1}}\operatorname{L}_{Z_{2}}h,
\]
so $\operatorname{L}_{Z_{1}}(a_{1}+\operatorname{L}_{Z_{2}}h)=0$, which implies that $a_{1}+\operatorname{L}_{Z_{2}}h$ is constant along the $\operatorname{pr}_{M}$-fibers. By hypothesis, both $a_{1}$ and $\operatorname{L}_{Z_{2}}h$ vanish on $M\times\{0\}$. Hence, $a_{1}+\operatorname{L}_{Z_{2}}h=0$. Therefore, $h$ is a solution of \eqref{eq:Ej1-Ham} and so is a Hamiltonian for $Y$.
\end{proof}



We now describe the subspace $\mathcal{I}^{1}$. For $Y\in\operatorname{Poiss}(M\times N,P)\cap\Gamma(\mathbb{V})$, we have that $Y\in\mathcal{I}^{1}$ if and only if there exists a horizontal 1-form $\alpha\in\Gamma(\mathbb{V}^{0})$ such that $\D^{\gamma}_{1,0}Y + \D^{P}_{0,1}\alpha =0$. Since $\gamma$ is the trivial (flat) connection, the horizontal $\operatorname{pr}_{M}$-projectable vector fields $u$ are locally written in the form $u=u_{i}\tfrac{\partial}{\partial x_{i}}$. Therefore, the relation between $Y$ and $\alpha$ reads $[u,Y]=-P^{\sharp}\operatorname{d}[\alpha(u)]$. Since $Y$ is a vertical infinitesimal automorphism of $P$, there exists $a_{1},a_{2}\in C^{\infty}(M\times N)$ such that \eqref{eq:Ej1-a1a2} holds and $Y=a_{1}Z_{1}+a_{2}Z_{2}$. Then, $[u,Y] = (\operatorname{L}_{u}a_{1})Z_{1}+(\operatorname{L}_{u}a_{2})Z_{2}$. Because of Proposition \ref{prop:Ej1}, $[u,Y]$ is Hamiltonian if and only if $a_{1}$ and $a_{2}$ are constant along the zero section $M\times\{0\}$. Hence,
\[
\tfrac{\mathcal{I}^{1}}{\operatorname{Ham}(M\times N,P)} = \mathbb{R}.[Z_{1}]\oplus\mathbb{R}.[Z_{2}].
\]
Observe also that the Hamiltonian function $\alpha(u)$ of $-[u,Y]$ can be given by the formula
\[
\alpha(u(x,y)) = -\int_{0}^{\infty}(\operatorname{L}_{u}a_{2})(x,e^{-t}y)\operatorname{d}t.
\]
Furthermore, we have $\alpha=-\partial^{\gamma}_{1,0}h$, where $h$ is given as in \eqref{eq:Ej1-hInt}. This follows from the identity
\[
(\operatorname{L}_{u}h)(x,y) = \int_{0}^{\infty}(\operatorname{L}_{u}a_{2})(x,e^{-t}y)\operatorname{d}t \qquad\forall u.
\]
Therefore, the flatness of $\gamma$ implies $\D^{\gamma}_{1,0}\alpha=-(\D^{\gamma}_{1,0})^{2}h=-\operatorname{L}_{R^{\gamma}}h=0$. Finally, we have $\D^{\sigma}_{2,-1}Y = \operatorname{L}_{Y}\operatorname{pr}_{M}^{*}\omega = 0$. In consequence, $\rho_{1}(Y)=0$ for all $Y\in\mathcal{I}^{1}$, so $\ker\rho_{1} = \mathcal{I}^{1}$.

\begin{theorem}\label{teo:Ej1-H1}
  The first cohomology of the Dirac manifold $M\times N$ given by the product of a presymplectic manifold $(M,\omega)$ with the Poisson manifold $(N=\mathbb{R}^{2}_{y},\Psi=\|y\|^{2}\tfrac{\partial}{\partial y_{1}}\wedge\tfrac{\partial}{\partial y_{2}})$ is
  \[
  H^{1}(M\times N,D) \cong H^{1}_{\mathrm{dR}}(M)\oplus H^{1}(N,\Psi).
  \]
\end{theorem}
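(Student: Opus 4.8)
The plan is to read the result off the abstract splitting of $H^{1}$ established in Corollary \ref{cor:H1} and then identify each of the two summands with one of the claimed geometric factors. Since the cohomology is an $\mathbb{R}$-vector space, I may apply the field case of Corollary \ref{cor:H1} to the bigraded complex $(\mathcal{C},\D)$ associated with the coupling data $(\gamma,\sigma,P)$, obtaining
\[
H^{1}(M\times N, D)\cong H^{1}(\mathcal{N}_{0},\overline{\D})\oplus\frac{\ker(\rho_{1})}{B^{1}(\mathcal{C}^{0,\bullet},\D_{0,1})}.
\]
It therefore suffices to show $H^{1}(\mathcal{N}_{0},\overline{\D})\cong H^{1}_{\mathrm{dR}}(M)$ and $\frac{\ker(\rho_{1})}{B^{1}(\mathcal{C}^{0,\bullet},\D_{0,1})}\cong H^{1}(N,\Psi)$.

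For the first summand I would use that every Casimir of $(N,\Psi)$ is constant, so that the Casimirs of $(M\times N,P)$ are exactly $\operatorname{pr}_{M}^{*}C^{\infty}(M)$. Consequently the null subcomplex $(\mathcal{N}_{0},\overline{\D})$ --- whose degree-$p$ part consists of horizontal $p$-forms with Casimir coefficients, equipped with $\overline{\D}=\D^{\gamma}_{1,0}$ --- is isomorphic as a cochain complex, using the flatness of $\gamma$, to the de Rham complex $(\Gamma(\wedge^{\bullet}T^{*}M),\operatorname{d})$ of $M$. This yields $H^{1}(\mathcal{N}_{0},\overline{\D})\cong H^{1}_{\mathrm{dR}}(M)$, as already noted in the discussion preceding the statement.

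For the second summand the inputs are the identifications established above, namely $\ker(\rho_{1})=\mathcal{I}^{1}$ --- since $\rho_{1}$ vanishes on $\mathcal{I}^{1}$, because $\D^{\sigma}_{2,-1}Y=\operatorname{L}_{Y}\operatorname{pr}_{M}^{*}\omega=0$ and the correction term $\D^{\gamma}_{1,0}\alpha$ is zero by flatness of $\gamma$ --- together with $B^{1}(\mathcal{C}^{0,\bullet},\D_{0,1})=\operatorname{Ham}(M\times N,P)$. The computation carried out before the statement then gives $\frac{\mathcal{I}^{1}}{\operatorname{Ham}(M\times N,P)}=\mathbb{R}.[Z_{1}]\oplus\mathbb{R}.[Z_{2}]$. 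To finish, I would match this with $H^{1}(N,\Psi)$ by specializing Proposition \ref{prop:Ej1} to the case $M=\{\operatorname{pt}\}$ (equivalently, by directly computing the first Poisson cohomology of the planar quadratic structure $\Psi=\|y\|^{2}\tfrac{\partial}{\partial y_{1}}\wedge\tfrac{\partial}{\partial y_{2}}$): the Poisson vector fields are of the form $a_{1}Z_{1}+a_{2}Z_{2}$ and the Hamiltonian ones are those whose coefficients vanish at the origin, so $H^{1}(N,\Psi)\cong\mathbb{R}.[Z_{1}]\oplus\mathbb{R}.[Z_{2}]$. Combining the two identifications gives the claim.

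The main obstacle is entirely contained in Proposition \ref{prop:Ej1} and the quotient computation, that is, in the behaviour of $P$ at the singular leaf $M\times\{0\}$: one must show that a vertical Poisson vector field, which on the regular locus $M\times N_{\mathrm{reg}}$ is forced to have the form $b_{1}Z_{1}+b_{2}Z_{2}$, extends smoothly across the origin to a global $a_{1}Z_{1}+a_{2}Z_{2}$ with $a_{1},a_{2}\in C^{\infty}(M\times N)$, and that such a field is Hamiltonian precisely when $a_{1},a_{2}$ vanish on the zero section (the explicit Hamiltonian being recovered by the integral formula for $h$). The same singularity makes the independent evaluation of $H^{1}(N,\Psi)$ delicate, since it is the Poisson cohomology of a non-regular quadratic structure rather than of a symplectic one; everything else is the routine bookkeeping of matching the abstract summands of Corollary \ref{cor:H1} with these geometric objects.
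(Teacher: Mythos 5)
Your proposal is correct and follows essentially the same route as the paper: both apply the splitting of Theorem \ref{teo:H1}/Corollary \ref{cor:H1}, identify $H^{1}(\mathcal{N}_{0},\overline{\D})\cong H^{1}_{\mathrm{dR}}(M)$ via the constancy of Casimirs and the flatness of $\gamma$, identify $\ker(\rho_{1})=\mathcal{I}^{1}$ and $B^{1}(\mathcal{C}^{0,\bullet},\D_{0,1})=\operatorname{Ham}(M\times N,P)$ so that the second summand becomes $\mathbb{R}.[Z_{1}]\oplus\mathbb{R}.[Z_{2}]$ by Proposition \ref{prop:Ej1}, and finally recognize this as $H^{1}(N,\Psi)$ by taking $M$ to be a single point in that proposition. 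You also correctly locate the analytic core of the argument in the extension-across-the-singular-leaf step of Proposition \ref{prop:Ej1}, which is exactly where the paper's work lies as well.
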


\begin{proof}
  Because of our above discussion, $H^{1}(\mathcal{N}^{\bullet},\overline\D)\cong H^{1}_{\mathrm{dR}}(M)$, and $\tfrac{\ker\rho_{1}}{\operatorname{Ham}(M\times N,P)} = \mathbb{R}.[Z_{1}]\oplus\mathbb{R}.[Z_{2}]$. Therefore,
  \[
  H^{1}(M\times N,D) \cong H^{1}_{\mathrm{dR}}(M)\oplus(\mathbb{R}.[Z_{1}]\oplus\mathbb{R}.[Z_{2}]).
  \]
  Finally, the fact that $H^{1}(N,\Psi)\cong\mathbb{R}.[Z_{1}]\oplus\mathbb{R}.[Z_{2}]$ follows from Proposition \ref{prop:Ej1} with $M$ consisting of a single point (see also \cite{Mon-02}).
\end{proof}

\bibliographystyle{acm}
\bibliography{C:/Users/hamst/Documents/Bibliography}
\end{document}